\documentclass[11pt]{amsart}

\usepackage[a4paper,margin=1in]{geometry}
\usepackage{amsmath,amssymb,amsthm}
\usepackage{enumitem}
\usepackage[colorlinks=true,linkcolor=blue,citecolor=blue,urlcolor=blue]{hyperref}
\usepackage{orcidlink}
\usepackage{xcolor}
\usepackage{tikz}
\usetikzlibrary{decorations.pathreplacing}
\newtheorem{theorem}{Theorem}[section]
\newtheorem{proposition}[theorem]{Proposition}
\newtheorem{lemma}[theorem]{Lemma}
\newtheorem{corollary}[theorem]{Corollary}

\newtheorem{conjecture}{Conjecture}
\AtBeginDocument{%
  \setlength{\abovedisplayskip}{4pt plus 2pt minus 1pt}%
  \setlength{\belowdisplayskip}{4pt plus 2pt minus 1pt}%
  \setlength{\abovedisplayshortskip}{2pt plus 1pt minus 1pt}%
  \setlength{\belowdisplayshortskip}{3pt plus 1pt minus 1pt}%
}

\theoremstyle{definition}
\newtheorem{definition}[theorem]{Definition}
\theoremstyle{remark}
\newtheorem{remark}[theorem]{Remark}

\newcommand{\N}{\mathbb N}

\newcommand{\Z}{\mathbb Z}

\newcommand{\cB}{\mathcal B}
\newcommand{\cK}{\mathcal K}
\newcommand{\Fcal}{\mathcal F}
\newcommand{\M}{\mathcal M}
\newcommand{\mc}{\mathcal}
\newcommand{\mb}{\mathbb}
\newcommand{\mr}{\mathrm}
\newcommand{\Prob}{\operatorname{Prob}}
\newcommand{\supp}{\operatorname{supp}}
\newcommand{\esssup}{\operatorname*{ess\,sup}}
\newcommand{\tdeg}{\operatorname{tdeg}}

\newcommand{\diam}{\operatorname{diam}}
\newcommand{\id}{\operatorname{id}}
\newcommand{\Isom}{\operatorname{Isom}}

\newcommand{\Prof}{\operatorname{Prof}}
\newcommand{\IT}{\operatorname{IT}}
\newcommand{\IN}{\operatorname{IN}}
\newcommand{\Xeq}{X_{\mr{eq}}}
\newcommand{\Torus}{\mathbb T}
\newcommand{\multiset}[1]{\left\{\!\left\{#1\right\}\!\right\}}
\def\dist{\mr{dist}}
\title[Conditional Topomorphic Degree and Multivariate Mean Equicontinuity]{Conditional Topomorphic Degree and Multivariate Mean Equicontinuity for Minimal Amenable Group Actions}

\author[C. Liu]{Chunlin Liu\orcidlink{0000-0001-6277-013X}}
\address[C. Liu]{School of Mathematical Sciences, Dalian University of
Technology, Dalian 116024, P.R. China; and Institute of Mathematics,
Polish Academy of Sciences, ul. Śniadeckich 8, 00-656 Warszawa, Poland}
\email{chunlinliu@mail.ustc.edu.cn}

\subjclass[2020]{Primary 37B05; Secondary 37B40, 37A35, 37A15}
\keywords{Amenable group action, multivariate mean equicontinuity,
conditional topomorphic degree, finite-to-one topomorphic extension,
maximal equicontinuous factor, maximal pattern entropy, IT-tuple}
\thanks{This article  was supported by the
Postdoctoral Fellowship Program and China Postdoctoral Science Foundation under Grant Number BX20250067, and the China Postdoctoral Science Foundation under Grant Number 2025M773074.
}
\date{}

\begin{document}
\begin{abstract}
Let $G$ be a countably infinite discrete amenable group acting minimally
on a compact metric space $X$, and let $\pi:X\to X_{\mathrm{eq}}$ be the
maximal equicontinuous factor map.  We introduce the
\emph{conditional topomorphic degree}
$d:=\tdeg_G(X)\in\mathbb N\cup\{\infty\}$, which, when finite, is the
least integer such that $\pi$ is an at most $d$-to-one topomorphic
extension.  We prove that, for every $r\ge2$, Weyl mean
$r$-equicontinuity, mean $r$-equicontinuity along some F{\o}lner
sequence, and $d\le r-1$ are equivalent.  For minimal
$\mathbb Z$-systems, this settles a conjecture of Breitenb\"ucher,
Haupt, and J\"ager.

We further establish the decomposition formula $d=\sum_{\mu\in\mathcal M_G^e(X)}\iota_\mu\exp\bigl(h_\mu^*(G)\bigr),$
where $\iota_\mu$ is the degree of the factor map from the
measure-theoretic maximal compact factor associated with $\mu$ onto
$X_{\mathrm{eq}}$, and $h_\mu^*(G)$ is the maximal measure sequence
entropy of $\mu$. As further consequences of the decomposition formula, we show that for every finite $N$ with
$2\le N\le d$, the system admits an essential IT $N$-tuple.
Consequently, $h_{\mathrm{top}}^*(X,G)\ge \log d.$
This strengthens a lower bound of Liu, Wang, and Xu by also detecting
compact multiplicities.

As an application, we answer a question of G\'omez, Le\'on-Torres,
and Mu\~noz-L\'opez.  If $G$ contains a finite-index normal subgroup
isomorphic to $\mathbb Z^r$, then, for every $m\ge2$, there exists a
free minimal uniquely ergodic zero-entropy finite-alphabet $G$-subshift
with maximal topological sequence entropy $\log m$, an essential IT
$m$-tuple, and no essential IN $(m+1)$-tuple.  For $G=\mathbb Z$, the
alphabet may be chosen to have exactly $m$ symbols.

Finally, we realize every finite multiplicity profile by a zero-entropy
minimal almost one-to-one extension of an irrational circle rotation.
\end{abstract}

\maketitle

\section{Introduction}
\label{sec:introduction}

Classical equicontinuity requires nearby points to remain uniformly close
at every group time.  Mean equicontinuity weakens this demand by averaging
the orbit distance, thereby allowing a set of exceptional times of small
density.  This idea goes back to mean-$L$-stability
\cite{Auslander1959} and was developed by Li,
Tu and Ye \cite{LiTuYe2015}.  Their work established, among other things,
the mean-equicontinuity and mean-sensitivity dichotomy in the minimal and
transitive settings and the discreteness of the spectrum of every ergodic
measure carried by a mean equicontinuous system.  Subsequent work connected
mean equicontinuity with bounded measure complexity, almost automorphy,
sequence entropy and continuous spectral data; see
\cite{GarciaRamos2017,GarciaRamosJaegerYe2021,
GarciaRamosMarcus2019,HuangEtAl2021,li2021meanEquicontinuity}.

For minimal systems this averaged orbit condition has a particularly rigid
extension-theoretic interpretation.  Downarowicz and Glasner proved for
$\mathbb Z$-actions that mean equicontinuity is equivalent to the factor
map onto the maximal equicontinuous factor being a measure-theoretic
isomorphism \cite{DownarowiczGlasner2016}.  Fuhrmann, Gr\"oger and Lenz
extended this picture to locally compact, $\sigma$-compact amenable group
actions: in the minimal setting, mean equicontinuity is equivalent to
topo-isomorphy over the maximal equicontinuous factor and also to discrete
spectrum with continuous eigenfunctions
\cite{FuhrmannGroegerLenz2022}.  Thus pairwise mean equicontinuity is not
merely a regularity property of orbit averages; it says that the maximal
equicontinuous factor loses no information from the viewpoint of invariant
measures.

The pairwise theory distinguishes the case in which almost every
conditional fiber consists of one effective point, but it does not quantify
higher finite multiplicities.  Finite-multiplicity extensions of
equicontinuous systems arise naturally in substitution systems, Toeplitz
flows, and model-set dynamics; see
\cite{BaakeGrimm2013,Downarowicz2005,
IwanikLacroix1994,FuhrmannGlasnerJaegerOertel2021}.
The multivariate theory is designed precisely to capture such phenomena.
Mean-sensitive tuples and related multivariate sensitivity notions were
studied by Li, Ye and Yu \cite{LiYeYu2022,LiYu2021}.  Their connections with sequence
entropy tuples were established in
\cite{GarciaRamosMunozLopez2024,LiLiuTuYu2024,LiuXuZhang2025Measure}. For amenable group actions,
related mean-sensitive and regional mean-sensitive notions were developed
in \cite{LiuYin2023,HauserLiu2026Regional}.
Other multivariate equicontinuity notions were used by Garc\'ia-Ramos and
Le\'on-Torres to characterize the maximal rank and coincidence rank of the
maximal equicontinuous factor
\cite{GarciaRamosLeonTorres2025}.
Related notions of multivariate diam-mean equicontinuity and frequent
stability, together with their structural and local characterizations,
were investigated in
\cite{Haupt2025,HauptJaegerLiu2025}.  Breitenb\"ucher, Haupt and J\"ager
introduced mean $r$-equicontinuity together with finite-to-one
topomorphic extensions \cite{BreitenbuecherHauptJaeger2026}.  They proved
that an at most $m$-to-one topomorphic extension of the maximal
equicontinuous factor is mean $(m+1)$-equicontinuous and formulated the
converse as follows.

\begin{conjecture}\label{conj1}
For $m\in\N$ with $m\ge 2$, a minimal $\mathbb Z$-system is mean
$(m+1)$-equicontinuous but not mean $m$-equicontinuous if and only
if it is an $m$-to-one topomorphic extension of its maximal
equicontinuous factor.
\end{conjecture}

Our main result shows this conjecture holds for all countably infinite discrete amenable groups. More precisely,
let $G$ be a countably
infinite discrete amenable group, let $(X,G)$ be a minimal action on a compact
metric space $(X,d)$, and let
$
\pi:(X,G)\to(\Xeq,G)
$
be its maximal equicontinuous factor.  The system $(\Xeq,G)$ is
uniquely ergodic; write $\nu$ for its invariant measure.  If
$\mu\in\M_G(X)$, disintegrate
$
\mu=\int_{\Xeq}\mu_y\,d\nu(y)
$
over $\pi$.  We define the \emph{conditional topomorphic degree} by
\begin{equation}\label{eq:intro-topomorphic-degree}
\tdeg_G(X)
:=
\sup_{\mu\in\M_G(X)}
\esssup_{y\in\Xeq}|\supp(\mu_y)|
\in\N\cup\{\infty\},
\end{equation}
where every infinite support is assigned the value $\infty$.
 For $\mathbb Z$-actions, the inequality
$\tdeg_{\mathbb Z}(X)\le m$ is precisely the conditional-measure
characterization of an at most $m$-to-one topomorphic extension
\cite[Proposition~3.1]{BreitenbuecherHauptJaeger2026}.
In fact, the same characterization remains valid for actions of
arbitrary countable discrete groups;
see Appendix~\ref{app:finite-topomorphic-characterization}.

\begin{theorem}
\label{thm:main}
Let $r\ge2$.  The following are equivalent.
\begin{enumerate}[label=\textup{(\roman*)},leftmargin=2.4em]
\item $(X,G)$ is Weyl mean $r$-equicontinuous.
\item $(X,G)$ is $\Fcal$-mean $r$-equicontinuous for some 
F{\o}lner sequence $\Fcal$.
\item $\tdeg_G(X)\le r-1$.
\end{enumerate}
\end{theorem}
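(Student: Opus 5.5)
We prove the cycle (i)$\Rightarrow$(ii)$\Rightarrow$(iii)$\Rightarrow$(i). The implication (i)$\Rightarrow$(ii) is immediate. Weyl mean $r$-equicontinuity controls the averages uniformly over all sufficiently invariant finite sets, so in particular it controls them along any fixed F{\o}lner sequence $\Fcal$.

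For (iii)$\Rightarrow$(i) we adapt the Breitenb\"ucher--Haupt--J\"ager argument to amenable groups, using the characterization in Appendix~\ref{app:finite-topomorphic-characterization}. Suppose $\tdeg_G(X)\le r-1$. Then for every $\mu\in\M_G(X)$ and $\nu$-a.e.\ $y$, the fibre $\pi^{-1}(y)$ carries at most $r-1$ points of $\supp\mu_y$. Hence among any $r$ points of such a fibre lying in $\supp\mu_y$, two coincide.

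Fix $\varepsilon>0$ and put $\Phi(x_1,\dots,x_r)=\min_{i\neq j}d(x_i,x_j)$. We argue by contradiction: suppose there are tuples $\delta_n$-close to the diagonal, with $\delta_n\to0$, and almost invariant sets $F_n$ with $\frac1{|F_n|}\sum_{g\in F_n}\Phi(gx^{(n)})\ge\varepsilon$. Push the uniform measures on $F_n x^{(n)}$ to $X^r$ and take a weak$^*$ limit $\lambda$. Then $\lambda$ is a $G$-invariant measure, and $\int\Phi\,d\lambda\ge\varepsilon$. The limit $\lambda$ is supported on the $r$-fold fibre product $R_\pi^r$: the tuples start $\delta_n$-close, and the equicontinuity of $\Xeq$ keeps their images in $\Xeq$ uniformly close along the whole orbit.

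Each marginal of $\lambda$ lies in $\M_G(X)$, and all marginals project to $\nu$. Let $\mu$ be the average of the marginals, and disintegrate $\lambda$ over $\Xeq$ as $\lambda=\int\lambda_y\,d\nu(y)$. Then $\lambda_y$ is carried by $(\supp\mu_y)^r$. By the bound on $|\supp\mu_y|$, every tuple in this set has a repeated coordinate, so $\Phi=0$ $\lambda$-a.e. This contradicts $\int\Phi\,d\lambda\ge\varepsilon$.

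For (ii)$\Rightarrow$(iii) we argue contrapositively. Suppose $\tdeg_G(X)\ge r$. Ergodic decomposition, together with measurability of $y\mapsto|\supp\mu_y|$, gives an ergodic $\mu$ such that $|\supp\mu_y|\ge r$ on a set of positive $\nu$-measure. Form the relative product $\lambda=\int\mu_y^{\otimes r}\,d\nu(y)$ on $R_\pi^r$. It is $G$-invariant, and it gives positive mass to the set $\{\Phi\ge\eta\}$ for some $\eta>0$.

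Pass to a tempered subsequence of $\Fcal$ and apply Lindenstrauss' pointwise ergodic theorem to an ergodic component of $\lambda$. This produces tuples $\mathbf x\in R_\pi^r$ whose $\Fcal$-mean of $\Phi$ is at least some $c>0$. By F{\o}lner invariance, the same lower bound holds for every translate $g\mathbf x$, and it survives small perturbations in the following sense. If $\mathbf x'$ is close to $\mathbf x$ and lies in the closure of the generic set, Fatou's lemma applied to $\Phi\ge c\,1_{\{\Phi\ge\eta\}}$ still gives a positive lower bound. We plan to make this rigorous by working with $\lambda$-typical points.

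The step we expect to be the main obstacle is transporting such a tuple close to the diagonal. For this we use that $R_\pi$ coincides with the regionally proximal relation for minimal actions, in its $r$-tuple form. Given $\delta>0$ and a generic $\mathbf x=(x_1,\dots,x_r)\in R_\pi^r$, this yields $x_i'$ near $x_i$ and $g\in G$ with $\diam\{gx_1',\dots,gx_r'\}<\delta$. The delicate point is to choose the $x_i'$ still generic, or at least to keep the positive mean. We would handle this by first choosing $\mathbf x$ in the support of $\lambda$ restricted to $\{\Phi\ge\eta\}$. We then use the fact that $\lambda$-almost every point of any neighbourhood of $\mathbf x$ is generic, so the perturbation can be taken inside a $\lambda$-positive neighbourhood.

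The tuple $g\mathbf x'$ is then $\delta$-close to the diagonal, and its $\Fcal$-mean of $\Phi$ is at least $c/2$. Since $\delta$ was arbitrary, this shows that $(X,G)$ is not $\Fcal$-mean $r$-equicontinuous. If regional proximality only generates $R_\pi$ rather than equalling it, we would instead run a chain argument in which each link moves one coordinate.
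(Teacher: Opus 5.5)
Your (i)$\Rightarrow$(ii) is immediate, and your (iii)$\Rightarrow$(i) argument is correct. The weak$^*$ limit $\lambda$ of the empirical measures is invariant and carried by $R_\pi^r$. Its fibre measures $\lambda_y$ have marginals $\mu^{(i)}_y$, so they live on $(\supp\mu_y)^r$ with $|\supp\mu_y|\le r-1$, and $\int\Phi\,d\lambda=0$. This is a valid, shorter alternative to the Lusin--pigeonhole proof of Theorem~\ref{thm:forward}.

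The gap is in (ii)$\Rightarrow$(iii), at exactly the step you flag. In your scheme the only source of a positive $\Fcal$-mean of $\Phi$ is typicality for an ergodic invariant measure $\lambda'$ on $R_\pi^r$. This property does not pass to nearby points. Fatou's lemma gives nothing, because $\mathbf x'_k\to\mathbf x'$ does not control $\Phi(g\mathbf x'_k)$ uniformly in $g$; this failure of semicontinuity is exactly what mean sensitivity is about. So you need a $\lambda'$-typical tuple with a translate $\delta$-close to the diagonal $\Delta_r:=\{(x,\ldots,x)\}$ for every $\delta$. Every point of $\supp\lambda'$ has its whole orbit in $\supp\lambda'$, so this forces $\supp\lambda'\cap\Delta_r\ne\emptyset$. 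No regional-proximality statement about $X^r$ as a topological space can supply that.

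It genuinely fails. Take the Thue--Morse system $M_2$ of Lemma~\ref{lem:Morse-exact-degree-independence} and $r=2$. There $\mu_y$ is uniform on $\{x,\bar x\}$, with $\bar x$ the bit flip. Both marginals of any invariant measure on $R_\pi^2$ equal $\mu$ by unique ergodicity, so its fibre measures couple $\mu_y$ with itself. Hence it is carried by $\Delta_2\cup\{(x,\bar x):x\in M_2\}$, a union of two disjoint closed invariant sets. The ergodic ones are therefore $\int\delta_{(x,x)}\,d\mu$, which has mean $0$, and $\int\delta_{(x,\bar x)}\,d\mu(x)$, whose support is compact and disjoint from $\Delta_2$. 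So no ergodic measure on $R_\pi^2$ has both properties, although $M_2$ is not mean $2$-equicontinuous; a chain argument cannot repair this. This is precisely the obstruction described in the introduction.

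The missing idea is that the witnessing tuples must be taken off the fibre product. The paper fixes phases $z_q^0=t_q^0H_q\in A_q(x)$ compatible with $x$. It then uses the joinings $\Lambda_t$ of Section~\ref{sec:compact-phase-decompositions}, built over the maximal compact factors, for $t$ near $t^0$. For such $t$ the coordinates in general no longer lie in a common fibre of $\pi$. Three ingredients make this work:
\begin{itemize}
\item almost every $\Lambda_t$ is ergodic (Proposition~\ref{prop:mixed-phase-decomposition}, resting on Lemma~\ref{lem:mixed-invariant-algebra});
\item rectangle masses are continuous in $t$ (Lemma~\ref{lem:rectangle-continuity}), so a separated target $O_1\times\cdots\times O_r$ keeps positive mass;
\item Haar averaging shows that a positive-measure set of such $t$ also charges $U^r$.
\end{itemize}
A $\Lambda_t$-typical point of $U^r$ is then the required tuple (Theorem~\ref{thm:mixed-phase-local-realization}, Corollary~\ref{cor:phase-localization}).

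Two smaller claims in your sketch are also wrong as stated. First, the reduction to a single ergodic $\mu$ with $|\supp\mu_y|\ge r$ fails. $\tdeg_G(X)=\sum_\mu\iota_\mu b_\mu$ can equal $r$ while every ergodic measure has one-point conditionals, for instance the profile $\multiset{(1,1),\ldots,(1,1)}$ in Theorem~\ref{thm:finite-profile-realization}. This is why the paper needs Lemma~\ref{lem:label-budget} and the packet argument of Theorem~\ref{thm:converse-degree-bound}. Second, for a left F{\o}lner sequence of a non-abelian $G$, $D_{r,\Fcal}(g\mathbf x)$ averages over $F_ng$, so it need not equal $D_{r,\Fcal}(\mathbf x)$.
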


The implication \textup{(ii)}$\Rightarrow$\textup{(iii)} is the main
difficulty.  The precise obstruction was already clearly identified by
Breitenb\"ucher, Haupt and J\"ager
\cite[Remark~5.2]{BreitenbuecherHauptJaeger2026}.  If a conditional measure
over the maximal equicontinuous factor has $r$ separated points in its
support, then these points lie in a single fiber.  Mean
$r$-equicontinuity, however, is tested on $r$ points chosen in an
arbitrarily small common neighborhood.  Thus the measure-theoretic
configuration supplied by the conditional measure does not directly
produce the local topological configuration needed to contradict mean
$r$-equicontinuity.  In particular, there is in general no group element
that moves the original fiber tuple into an arbitrarily small common
neighborhood.

Our argument overcomes this obstruction by transporting the compact
phase data rather than the original fiber tuple.  The separated points
in the conditional fiber are used only to choose pairwise separated
target neighborhoods
$
V_1,\ldots,V_r
$
and the corresponding compatible phases in the measure-theoretic
maximal compact factors.  Fix a neighborhood $U$ of the point at
which mean $r$-equicontinuity is tested.  Positivity of the target
rectangle
$
V_1\times\cdots\times V_r
$
persists under small perturbations of the phase parameter.  On the
other hand, compatibility of the prescribed phases with the source
point, together with Haar averaging over the joint phase group, shows
that a positive-measure set of such phase parameters gives positive
mass to $U^r$.  Since almost every lifted phase joining is ergodic, we
can therefore choose a phase parameter $t$ such that
\[
\Lambda_t(U^r)>0
\qquad\text{and}\qquad
\Lambda_t(V_1\times\cdots\times V_r)>0.
\]
A generic point
$
(y_1,\ldots,y_r)\in U^r
$
for $\Lambda_t$ then enters
$V_1\times\cdots\times V_r$ with positive upper density along the
prescribed F{\o}lner sequence.  Since the target neighborhoods are
uniformly separated, this tuple has positive mean $r$-distance.
Thus no group element is required to move the original fiber tuple
into $U$; the mixed compact-phase joining recreates the same separated
orbit pattern inside $U$; see Theorem \ref{thm:mixed-phase-local-realization}.

The compact-factor analysis also yields an exact decomposition of the
conditional topomorphic degree.  For
$\mu\in\M_G^e(X)$, let
$
\kappa_\mu:(X,\mu,G)\to(Z_\mu,m_\mu,G)
$
be the maximal compact factor of $(X,\mu,G)$.  The maximal
equicontinuous factor map $\pi$ factors through $\kappa_\mu$
measure-theoretically: there exists a continuous factor map
$
p_\mu:Z_\mu\to\Xeq
$
such that
\[
\pi=p_\mu\circ\kappa_\mu
\qquad \mu\text{-almost everywhere (a.e.)}.
\]
Let $\iota_\mu$ denote the common cardinality of the fibers of
$p_\mu$, and let $b_\mu$ denote the almost everywhere constant
cardinality of the support of the disintegration of $\mu$ over
$Z_\mu$.  Thus $\iota_\mu$ counts the compact phases that are
identified by the topological maximal equicontinuous factor, while
$b_\mu$ measures the residual multiplicity after the compact phase
has been fixed. 
The two multiplicities combine to give the following exact formula.

\begin{theorem}\label{thm:intro-degree}
For every $\mu\in\M_G^e(X)$,
\[
 |\supp(\mu_y)|
 =\iota_\mu b_\mu
 =\iota_\mu\exp\bigl(h_\mu^*(G)\bigr)
 \quad\text{for $\nu$-a.e. }y\in\Xeq.
\]
Moreover,
\[
 \tdeg_G(X)
 =\sum_{\mu\in\M_G^e(X)}\iota_\mu b_\mu
 =\sum_{\mu\in\M_G^e(X)}
 \iota_\mu\exp\bigl(h_\mu^*(G)\bigr),
\]
where the sums are understood in the extended sense.
\end{theorem}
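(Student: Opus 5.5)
We prove the fiberwise identity first and then assemble the global formula. Fix $\mu\in\M_G^e(X)$. Disintegrate $\mu$ over $\kappa_\mu$ as $\mu=\int_{Z_\mu}\mu_z\,dm_\mu(z)$, and disintegrate $m_\mu$ over $p_\mu$ as $m_\mu=\int_{\Xeq}m_{\mu,y}\,d\nu(y)$. The fibers of $p_\mu$ are cosets of a closed subgroup of the compact group rotation $Z_\mu$, and this subgroup has cardinality $\iota_\mu$. Hence $m_{\mu,y}$ is the normalized counting (Haar) measure on the fiber $p_\mu^{-1}(y)$, which has $\iota_\mu$ points. Since $\pi=p_\mu\circ\kappa_\mu$ $\mu$-a.e., uniqueness of disintegrations gives
\[
\mu_y=\frac1{\iota_\mu}\sum_{z\in p_\mu^{-1}(y)}\mu_z \qquad\text{for $\nu$-a.e. } y .
\]
By ergodicity, $|\supp\mu_z|=b_\mu$ a.e., since this quantity is an invariant measurable function. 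It then suffices to show that the supports $\supp\mu_z$, for distinct $z$ in one $p_\mu$-fiber, are pairwise disjoint. This gives $|\supp\mu_y|=\iota_\mu b_\mu$.

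Disjointness is the main obstacle, because a point of $X$ could a priori lie in the closed supports of two distinct conditional measures. We plan to handle it using finiteness. If $b_\mu<\infty$, then $\mu_z$ is atomic, and the fibers $\kappa_\mu^{-1}(z)$, defined on a full-measure invariant set, are disjoint. So the atoms of $\mu_z$ and $\mu_{z'}$ are distinct points, and $\supp\mu_z$ equals its finite atom set. If $b_\mu=\infty$, there is nothing to prove for the product formula, since both sides are infinite. The identity $b_\mu=\exp(h^*_\mu(G))$ is the known result of Huang–Maass–Ye type: maximal measure sequence entropy equals $\log$ of the a.e. fiber cardinality of the disintegration over the Kronecker factor, and it is infinite exactly when the disintegration is non-atomic. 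We invoke this result; for amenable $G$ we would reprove it by the standard relative-weak-mixing argument, in which the extension $X\to Z_\mu$ is finite-to-one precisely when it is compact, that is, relatively isometric with finite fibers.

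For the sum formula, write a general $\mu\in\M_G(X)$ by ergodic decomposition, $\mu=\int\mu^e\,d\rho(e)$. Since $\nu$ is the unique invariant measure on $\Xeq$, the disintegrations over $\pi$ combine as $\mu_y=\int\mu^e_y\,d\rho(e)$ a.e.
\begin{itemize}
\item Upper bound: $\supp\mu_y\subseteq\bigcup_{e\in\supp\rho}\supp\mu^e_y$. When $\rho$ is finitely supported, this gives $|\supp\mu_y|\le\sum_e\iota_e b_e$. In general the support of $\mu_y$ dominates the same union, so the bound by the full sum over $\M_G^e(X)$ holds.
\item Lower bound: we need the supports for distinct ergodic measures to be disjoint for a.e. $y$. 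Distinct ergodic measures $\mu,\mu'$ are mutually singular and are carried by disjoint invariant Borel sets of generic points. Taking $\mu=\frac1k\sum_{i=1}^k\mu_i$ over any finite family of ergodic measures, the atoms of the $(\mu_i)_y$ are then distinct a.e. This yields $\tdeg_G(X)\ge\sum_{i\le k}\iota_{\mu_i}b_{\mu_i}$.
\end{itemize}
Taking the supremum over finite families gives equality in the extended sense. If infinitely many ergodic measures exist, both sides are $\infty$.
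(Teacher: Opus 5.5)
Your overall route is the paper's. You compose the disintegrations over $\kappa_\mu$ and $p_\mu$, use uniform conditionals on finite $p_\mu$-fibres, and get disjointness from the fact that the atoms of $\mu_z$ lie in $\kappa_\mu^{-1}(\{z\})$. You cite $b_\mu=\exp(h_\mu^*(G))$, and you obtain the global formula by averaging finitely many ergodic measures carried by disjoint invariant sets.

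\textbf{The gap: the case $\iota_\mu=\infty$ is missing.} Your formula $\mu_y=\iota_\mu^{-1}\sum_{z\in p_\mu^{-1}(y)}\mu_z$ and the phrase ``normalized counting measure'' presuppose finite fibres. The only infinite case you treat is $b_\mu=\infty$. Since the statement lives in $\N\cup\{\infty\}$, you must also show $|\supp(\mu_y)|=\infty$ when $\iota_\mu=\infty$, even if $b_\mu=1$. Here is the missing argument, which is how the paper closes this case:
\begin{enumerate}
\item By Lemma~\ref{lem:homogeneous-factor-fibers}(b), the conditional measure $(m_\mu)_y$ has full support on the infinite compact fibre $p_\mu^{-1}(y)$, so it has infinite support.
\item The composition rule also gives $(\kappa_\mu)_*\mu_y=(m_\mu)_y$ for $\nu$-a.e.\ $y$.
\item If $\mu_y$ had finite support, its push-forward would too, which is a contradiction.
\end{enumerate}
Without this step the fibrewise identity, and with it the conclusion $\tdeg_G(X)=\infty$ whenever some $\iota_\mu=\infty$, is unproved.

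\textbf{Smaller points.}
\begin{enumerate}
\item For non-abelian $G$ the fibres of $p_\mu$ are not cosets in a group rotation. The space $Z_\mu=K_\mu/H_\mu$ is only a homogeneous space, and the fibre is $L_{y_0}/L_{z_0}$. Lemma~\ref{lem:homogeneous-factor-fibers}(b) nonetheless gives constant fibre cardinality and uniform conditionals on finite fibres, which is all you use.
\item You pass from $m_\mu$-a.e.\ statements about $\mu_z$ to all $z$ in a fibre. For this you need $p_\mu^{-1}(y)$ to lie in the good conull set for $\nu$-a.e.\ $y$. That follows from uniformity of $(m_\mu)_y$ on finite fibres, but you should say so.
\item In your upper bound, the sentence about $\supp(\mu_y)$ ``dominating the same union'' for general $\rho$ is not a valid argument. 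For non-atomic $\rho$ the inclusion needs a closure, and there is no common conull set of $y$ for uncountably many $e$. It is also unnecessary. Each term $\iota_\mu b_\mu$ is at least $1$, so infinitely many ergodic measures, or any single infinite term, already make both sides infinite. Otherwise every invariant measure is a finite convex combination of the ergodic ones, which is exactly the paper's argument.
\end{enumerate}
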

In the finite case, the multiplicative contribution of a fixed
ergodic measure $\mu$ can be read fiberwise as follows:
\begin{center}
\begin{tikzpicture}[
  >=stealth,
  every node/.style={font=\small},
  measurable/.style={->,dashed,thin},
  factor/.style={->,thin}
]

\node (A1) at (-2.5,2.2)
  {$\underbrace{\bullet\;\cdots\;\bullet}_{b_\mu}$};
\node at (0,2.2) {$\cdots$};
\node (Ai) at (2.5,2.2)
  {$\underbrace{\bullet\;\cdots\;\bullet}_{b_\mu}$};

\node (z1) at (-2.5,1.0) {$z_{\mu,1}$};
\node at (0,1.0) {$\cdots$};
\node (zi) at (2.5,1.0) {$z_{\mu,\iota_\mu}$};

\node (y) at (0,-0.2) {$y\in\Xeq$};

\draw[measurable]
  (A1) -- node[left,font=\scriptsize] {$\kappa_\mu$} (z1);
\draw[measurable] (Ai) -- (zi);

\draw[factor]
  (z1) -- node[below left,font=\scriptsize] {$p_\mu$} (y);
\draw[factor] (zi) -- (y);

\draw[
  decorate,
  decoration={brace,amplitude=4pt}
]
  (-3.1,2.8) -- (3.1,2.8)
  node[midway,above=4pt]
  {$\iota_\mu$ compact phases};

\node at (0,-1.05)
  {$\displaystyle
    |\supp(\mu_y)|
    =
    \underbrace{b_\mu+\cdots+b_\mu}_{\iota_\mu\ \mathrm{terms}}
    =
    \iota_\mu b_\mu
    =
    \iota_\mu\exp\!\bigl(h_\mu^*(G)\bigr)$};

\end{tikzpicture}
\end{center}

\begin{remark}
\label{rem:unique-arithmetic-rigidity}
By \cite[Theorem~2.9]{LiuWangXu2025}, for every
$\mu\in\mc M_G^e(X)$ there exists
$b_\mu\in\N\cup\{\infty\}$ such that
\[
h_\mu^*(G)=\log b_\mu.
\]

Now suppose that $(X,G)$ is uniquely ergodic, with unique invariant
measure $\mu$, and that
$
d:=\tdeg_G(X)<\infty.
$
Then Theorem~\ref{thm:intro-degree} implies
\[
h_\mu^*(G)\in\{\log b:b\mid d\}.
\]
\end{remark}

The notion of an IT-tuple originates in the independence theory of
Kerr and Li \cite{KerrLi2007}; see also \cite{Huang2006}.
Maximal pattern entropy and its tuple-theoretic description were
developed in \cite{Goodman1974,HuangYe2009}.
The case $\tdeg_G(X)=1$ is closely connected with earlier
fiber-rigidity results formulated in terms of sequence entropy and
independence.  For $\mathbb Z$-actions, a minimal system with zero
maximal pattern entropy is almost automorphic
\cite{HuangLiShaoYe2003}, and its maximal equicontinuous factor map is
regular one-to-one
\cite{GarciaRamos2017,GarciaRamosJaegerYe2021}.
For minimal group actions without essential IT-pairs,
analogous almost one-to-one and regularity results were obtained in
\cite{Glasner2018Tame,FuhrmannGlasnerJaegerOertel2021}.
Higher finite-multiplicity fiber-rigidity results were established in
\cite{HuangLianShaoYe2021,LiuXuZhang2025Minimal}.  The degree decomposition above yields
the following conclusion.
\begin{theorem}
\label{thm:intro-it-tuples}
For every finite integer $N$ with
$
2\le N\le d,
$
the system $(X,G)$ admits an essential IT $N$-tuple.  Moreover,
\[
h_{\mathrm{top}}^*(X,G)\ge\log \tdeg_G(X),
\]
where $\log\infty:=\infty$.  
\end{theorem}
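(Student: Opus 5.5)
Let $d:=\tdeg_G(X)$. The plan is to produce, for each finite $N$ with $2\le N\le d$, an ergodic measure whose conditional fibres over $\Xeq$ contain at least $N$ distinct points, and then to convert this fibre multiplicity into an essential IT $N$-tuple.

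\textbf{Step 1: finding enough multiplicity in the fibres.} By Theorem~\ref{thm:intro-degree}, $d=\sum_{\mu\in\M_G^e(X)}\iota_\mu b_\mu$.

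If a single ergodic $\mu$ has $\iota_\mu b_\mu\ge N$, then for $\nu$-a.e.\ $y$ the set $\supp(\mu_y)$ contains at least $N$ distinct points. This is the case where all the multiplicity comes from one measure.

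Otherwise the multiplicity is spread over several ergodic measures. Take finitely many ergodic $\mu_1,\dots,\mu_k$ with $\sum_j\iota_{\mu_j}b_{\mu_j}\ge N$. Their average $\mu=\frac1k\sum_j\mu_j$ satisfies $\supp(\mu_y)=\bigcup_j\supp((\mu_j)_y)$ for $\nu$-a.e.\ $y$. These supports are disjoint a.e., since distinct ergodic measures are mutually singular. This disjointness is exactly what underlies the additivity in Theorem~\ref{thm:intro-degree}, and I would take it from that proof.

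In either case we obtain an invariant $\mu$ and a positive-measure set of $y$ whose fibres $\supp(\mu_y)$ contain $N$ distinct points $x_1(y),\dots,x_N(y)$. These points can be chosen measurably in $y$ and with a uniform separation $\delta>0$ on a set of positive $\nu$-measure.

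\textbf{Step 2: from fibre points to IT-tuples.} The natural tool is the measure-theoretic independence characterization. Form the relative independent joining
\[
\lambda=\int_{\Xeq}\mu_y^{\otimes N}\,d\nu(y).
\]
Its support contains tuples $(x_1,\dots,x_N)$ with pairwise distinct entries. Then invoke the result of Kerr--Li type, for amenable $G$, that points in the support of $\lambda$ off the diagonals are IT-tuples. In the sequence-entropy formulation this corresponds to Huang--Ye and Liu--Xu--Zhang: for ergodic $\mu$, points of the support of the relative product over the Kronecker factor are sequence-entropy (mean-sensitive) tuples, hence IT-tuples.

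Two complications arise.

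\emph{Mixing measures across components.} When $\mu$ is a finite average of ergodic measures, the coordinates of the tuple may come from different components. The standard results are stated for a single ergodic measure relative to its Kronecker factor $Z_\mu$, not relative to $\Xeq$. I would handle this directly, which is the point of going through $\Xeq$. Take neighborhoods $U_1,\dots,U_N$ of the $x_i$. By minimality and equicontinuity of $\Xeq$, the set of group elements returning $y$ close to itself is syndetic-rich. Then use the fact that each $U_i$ has positive $\mu_{y}$-mass to build independence sets. Concretely, one exhibits arbitrarily large finite sets $I\subseteq G$ such that for every map $\sigma:I\to\{1,\dots,N\}$ the set $\bigcap_{g\in I}g^{-1}U_{\sigma(g)}$ is nonempty. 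The approach is Kerr--Li's combinatorial lemma applied to the measures $\mu_{y}$ transported along near-return times. The fibre measures over $\Xeq$ transform equivariantly, $g\mu_y=\mu_{gy}$, so this transport is available.

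\emph{Essentiality.} Pairwise distinctness of the $x_i$ gives that the tuple is essential.

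\textbf{Step 3: the entropy bound.} Once an essential IT $N$-tuple exists, the tuple characterization of maximal pattern entropy (Huang--Ye, extended to amenable groups) gives $h^*_{\mathrm{top}}(X,G)\ge\log N$. Taking $N=d$ when $d<\infty$, or letting $N\to\infty$ otherwise, yields $h_{\mathrm{top}}^*(X,G)\ge\log\tdeg_G(X)$.

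A shortcut is available via $h^*_{\mathrm{top}}\ge h^*_\mu$ and $\exp h^*_\mu=b_\mu$. But that only sees the $b_\mu$ factor and misses $\iota_\mu$ and the sum over measures. So the IT-tuple route is essential here.

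\textbf{Main obstacle.} I expect the hardest step to be Step 2's independence construction when the $N$ points lie in different compact phases, that is, when $\iota_\mu>1$ or several measures are involved. Here no single ergodic relative product over $Z_\mu$ contains the tuple. I would first try the mixed compact-phase joining device of Theorem~\ref{thm:mixed-phase-local-realization} to realize all $N$ phases simultaneously with positive joint mass, and then run the Kerr--Li independence-density argument on that joining.
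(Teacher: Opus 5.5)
There is a genuine gap, and it lies in Step~2, which carries the entire content of the theorem. You assert that off-diagonal points of $\supp\bigl(\int_{\Xeq}\mu_y^{\otimes N}\,d\nu(y)\bigr)$ are IT-tuples. The results you cite, however, work relative to the maximal compact factor $Z_\mu$ of a single ergodic measure, and they only detect the residual multiplicity $b_\mu$. They say nothing in the new situations. For example, suppose $\mu$ has discrete spectrum ($b_\mu=1$, $h_\mu^*(G)=0$) but $\iota_\mu\ge2$. Then there is no measure-theoretic sequence entropy to exploit. Independence comes only from the discontinuity of $\kappa_\mu$, i.e.\ from the fact that every phase in $p_\mu^{-1}(\pi(x))$ is compatible with every point $x$ (Corollary~\ref{cor:automatic-phase-persistence}).

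Your claim about the relative product over $\Xeq$ is in fact true. For finite $\iota_\mu$ one can check that it is a finite average of mixed phase joinings $\Lambda_t$ with phases in a single fibre $p_\mu^{-1}(\pi(x_0))$. But proving that such supports consist of IT-tuples is exactly what has to be done.

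Two further problems:
\begin{itemize}
\item The inference ``sequence-entropy tuples, hence IT-tuples'' is invalid. Essential sequence entropy tuples are the essential IN-tuples, and an IN-tuple need not be an IT-tuple.
\item Your concrete mechanism (near-returns of $y$, transporting $\mu_y$, Kerr--Li's combinatorial lemma) aims at arbitrarily large \emph{finite} independence sets, so at best it yields IN-tuples. That would suffice for the entropy bound via \eqref{eq:maximal-pattern-IN-rank}, but not for the IT assertion. As written, it also does not explain how points near different phases can be steered into arbitrary prescribed patterns simultaneously.
\end{itemize}

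The missing ingredient is a construction of a single infinite independence set. In the paper, Lemma~\ref{lem:finite-packet-off-diagonal} uses $d=\sum_\mu\iota_\mu b_\mu$ to choose measures and phases $z_q\in A_{\mu_q}(x_0)=p_{\mu_q}^{-1}(\pi(x_0))$ whose mixed phase joining gives positive mass off $\Delta^N_{\mathrm{fat}}(X)$. Proposition~\ref{prop:mixed-phases-produce-IT} then proves $\supp(\Lambda_{t^0})\subseteq\IT_N(X,G)$ by a branching induction. At stage $n$ one has points $x_\beta$, $\beta\in[N]^n$, with compatible phases and positive correlation $\Phi_B$. The inductive step then does the following:
\begin{itemize}
\item Each node is cloned $N$ times; positivity persists because $\int_L F^N\,dm_L>0$ whenever $\int_L F\,dm_L>0$.
\item Positivity is stabilized under small phase perturbations (Lemma~\ref{lem:rectangle-continuity}).
\item Hausdorff continuity of $A_q$ supplies nearby compatible phases at the new points.
\item The local realization theorem, via ergodicity of a.e.\ $\Lambda_t$, yields a return time $g_*$ outside any prescribed finite set, together with nested open sets $W_{a_1\cdots a_n}\subseteq\bigcap_{j\le n}g_j^{-1}V_{a_j}$.
\end{itemize}
Your final paragraph points toward the mixed phase joinings, but a density argument on one joining does not replace this recursive cloning-and-persistence step. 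Steps~1 and~3 are fine.
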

\begin{remark}
This improves the variational lower bound established in
\cite{LiuWangXu2025},
\[
h_{\mathrm{top}}^*(X,G)
\ge
\log\sum_{\mu\in\M_G^e(X)}
\exp\bigl(h_\mu^*(G)\bigr)
=
\log\sum_{\mu\in\M_G^e(X)}b_\mu.
\]
Indeed, that estimate detects only the residual multiplicities
$b_\mu$ remaining after the maximal compact phase has been fixed.  The
present argument also captures the compact multiplicities
$\iota_\mu$.  
\end{remark}

Theorem~\ref{thm:intro-it-tuples} also yields a new application to the realization
problem for finite sequence entropy. G\'omez, Le\'on-Torres and
Mu\~noz-L\'opez constructed free minimal zero-entropy subshifts with
finite positive maximal topological sequence entropy for groups
virtually isomorphic to $\mathbb Z^p$.  In the one-dimensional case,
they obtained the stronger exact realization $h_{\mathrm{top}}^*=\log m$
on an $m$-symbol subshift.  They asked whether both results admit
uniquely ergodic versions
\cite[Question~5.1]{GomezLeonTorresMunozLopez2026}.  We answer both
parts affirmatively and, in the virtually $\mathbb Z^p$ case, obtain
the stronger exact value $\log m$.

The two constructions exhibit distinct mechanisms for generating
independence.  In the Toeplitz examples of G\'omez, Le\'on-Torres and
Mu\~noz-L\'opez, the relevant multiplicity is manifested through the
presence of several ergodic invariant measures; in particular, for
$\mathbb Z$-actions, $m$ ergodic measures yield an essential IT
$m$-tuple.  The degree decomposition identifies an additional source
of multiplicity, namely the conditional topomorphic multiplicity
carried by a single ergodic invariant measure.  By
Theorem~\ref{thm:intro-it-tuples}, this internal multiplicity already
forces IT-tuples, and hence remains effective in the uniquely ergodic
setting.  This leads to uniquely ergodic realizations with prescribed
maximal topological sequence entropy $\log m$ for virtually
$\mathbb Z^p$ groups.

\begin{theorem}
\label{thm:intro-unique-virtual-abelian}
Let $G$ be a countable group containing a finite-index normal subgroup
$H\cong\mathbb Z^p$ for some $p\in\N$.  For every $m\ge2$, there are a
finite alphabet $\mathcal A$ and a free minimal uniquely ergodic
subshift
$
 X_{m,G}\subseteq\mathcal A^G
$
such that $(X_{m,G},G)$ has zero topological entropy and
\[
h_{\mathrm{top}}^*(X_{m,G},G)=\log m.
\]
Moreover, $(X_{m,G},G)$ admits an essential IT $m$-tuple and has no
essential IN $(m+1)$-tuple. When $G=\mathbb Z$, one may take
$
\mathcal A=\{0,1,\ldots,m-1\}.
$
\end{theorem}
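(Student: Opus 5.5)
Our plan is to construct the subshift as a minimal almost one-to-one extension of an odometer (or of a $G$-odometer built from $H\cong\mathbb Z^p$) whose unique invariant measure has conditional topomorphic degree exactly $m$. The bounds then come from two earlier results. Theorem~\ref{thm:intro-it-tuples} supplies the lower bound $h^*_{\mathrm{top}}\ge\log m$ and an essential IT $m$-tuple. The upper bound comes from controlling the fibers of the maximal equicontinuous factor map combinatorially.

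\textbf{Case $G=\mathbb Z$.} We take a Toeplitz-type construction over $\{0,\dots,m-1\}$ in which the holes at stage $k$ are filled by blocks cycling through all $m$ symbols in a rigid pattern. The aim is that, over every point of the odometer $\Xeq$, the fiber of $\pi$ has at most $m$ points, and exactly $m$ points over a set of full Haar measure. We would arrange the holes at each stage to be filled with the $m$ cyclic shifts of one fixed word, each appearing with equal frequency. This forces unique ergodicity, since all $m$ fiber points are equidistributed along a single measure. The resulting measure $\mu$ has $|\supp(\mu_y)|=m$ a.e., which realizes $\iota_\mu b_\mu=m$; we could aim for $\iota_\mu=m$, $b_\mu=1$, for instance via a hidden $\mathbb Z/m$ rotation phase. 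Then Theorem~\ref{thm:intro-degree} gives $\tdeg=m$.

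\textbf{General $G$.} We transfer the $\mathbb Z^p$ construction to $G$ using a $G$-invariant Toeplitz scheme built on a decreasing sequence of finite-index normal subgroups of $H$ that are normal in $G$ (characteristic-type subgroups $n_k!H$). We use the same periodic-part and hole-filling recipe, applied on fundamental domains of $G/\Gamma_k$ with coset representatives of $H$ in $G$. Freeness is ensured by making the periodic parts separate group elements, as in the G\'omez--Le\'on-Torres--Mu\~noz-L\'opez construction, whose skeleton we reuse. Only the filling is replaced by the equidistributed cyclic filling. Zero entropy follows because the extension is finite-to-one, or directly from the slow growth of hole densities.

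\textbf{Upper bound.} With at most $m$ points in every fiber of $\pi$, we want to show there is no essential IN $(m+1)$-tuple. An essential IN tuple has pairwise distinct entries, and by the known fiber result, IN tuples of a minimal system lie in a single fiber of the maximal equicontinuous factor (essential IN pairs are regionally proximal, and for minimal distal-factor reasons the whole tuple lies in one fiber). So $m+1$ distinct points in one fiber is impossible. The bound $h^*_{\mathrm{top}}\le\log m$ then follows from the tuple characterization of maximal pattern entropy (Huang--Ye): if $h^*_{\mathrm{top}}>\log m$, there is an essential IN $(m+1)$-tuple. Combined with the lower bound, this gives equality.

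\textbf{Main obstacle.} We expect the hardest step to be simultaneously guaranteeing every fiber has at most $m$ points, including over the non-regular points of the odometer, and unique ergodicity for general virtually abelian $G$. The non-$H$ cosets act on the fillings by automorphisms, and these could break the cyclic symmetry. We would first try choosing the filling words invariant under the finite group $G/H$ acting on $\mathbb Z/m$ trivially, so that the conjugation action preserves the equidistributed cyclic pattern.
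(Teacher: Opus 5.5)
There is a genuine gap. Your whole argument rests on a system you never actually construct. It would be a free, minimal, uniquely ergodic subshift whose maximal equicontinuous factor map $\pi$ has \emph{every} fiber of cardinality at most $m$, while $|\supp(\mu_y)|=m$ for $\nu$-a.e.\ $y$. The Toeplitz sketch (holes filled by cyclic shifts of one word with equal frequencies) verifies none of these properties:
\begin{itemize}
\item equal frequencies at each stage do not by themselves give unique ergodicity of an irregular Toeplitz flow (and irregularity is forced, since regular Toeplitz flows have $\tdeg=1$);
\item $|\supp(\mu_y)|=m$ a.e.\ needs a positive-density aperiodic part whose fillings are rigidly coupled;
\item the bound on \emph{all} fibers, which you flag as the main obstacle, is the sole input to your upper bound $h^*_{\mathrm{top}}\le\log m$.
\end{itemize}
That last bound is exactly what fails in the natural examples. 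In the Thue--Morse system (the case $m=2$ of $\tau_m$), consider the four two-sided fixed points of $\tau_2^2$, whose left half ends in $a$ and right half starts with $b$, for $a,b\in\{0,1\}$. All four lie in the fiber over $0$ of the map onto the $2$-adic odometer, although $h^*_{\mathrm{top}}=\log 2$. In irregular Toeplitz flows, base points at which hole boundaries persist at all scales similarly pick up extra one-sided limits. As written, the upper bound therefore has no foundation. The general-$G$ scheme (Toeplitz over $G/n_k!H$ with $G/H$-invariant fillings) inherits the same unverified claims and adds new ones.

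The missing observation is that a fiber bound is needed only over some \emph{null} factor, not over $\Xeq$. Your argument that IN tuples lie in one fiber is correct, and it works for any factor map $X\to Y$ with $Y$ null: images of IN tuples are IN tuples, and null systems have only diagonal IN pairs. The paper uses an existing system with exactly this structure, the generalized Morse subshift $M_m$ on $\{0,\dots,m-1\}$. Its Maass--Shao tower $M_m\to Y_m\to(M_m)_{\mathrm{eq}}$ has the first map exactly $m$-to-one onto a null $Y_m$, and the second map asymptotic, hence measure-theoretically injective. This gives $\tdeg_{\mathbb Z}(M_m)=m$, so Theorem~\ref{thm:intro-it-tuples} yields the essential IT $m$-tuple. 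Essential IN $(m+1)$-tuples are excluded by Maass--Shao's value $h^*_{\mathrm{top}}(M_m)=\log m$, or directly by your fiber argument applied to the $m$-point fibers over $Y_m$. For general $G$ no new Toeplitz construction is needed. The $\mathbb Z^p$-system $M_m\times S^{p-1}$ with Sturmian (null) coordinates keeps these properties, since an essential IN $(m+1)$-tuple must have equal Sturmian coordinates and would project to one in $M_m$. Induction to $G\times_H X$ then preserves freeness, minimality, unique ergodicity, zero entropy and the IT/IN conditions, because an independence set for sets inside one clopen copy lies in a single coset of $H$.
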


We next turn to the inverse realization problem suggested by the
degree decomposition.
Theorem \ref{thm:intro-degree} separates the compact and residual contributions to
the conditional topomorphic degree.  Our final main result shows that, in finite degree, every finite multiset of positive integer pairs can be realized,
even within the rigid class of zero-entropy minimal almost one-to-one
extensions of irrational circle rotations.

For a finite-degree minimal $\mathbb Z$-system $(X,T)$, define its
\emph{multiplicity profile} to be the indexed family
\[
\Prof(X,T)
:=
\bigl((\iota_\mu,b_\mu)\bigr)_{\mu\in\M_T^e(X)},
\]
viewed as a multiset of pairs of positive integers\footnote{This means that if two distinct ergodic measures
$\mu,\nu\in\M_T^e(X)$ satisfy
$(\iota_\mu,b_\mu)=(\iota_\nu,b_\nu)$, then this pair occurs twice in
$\Prof(X,T)$.}.

\begin{theorem}
\label{thm:finite-profile-realization}
For every finite multiset
$
\mathcal P
=
\multiset{
(\iota_1,b_1),\ldots,(\iota_\ell,b_\ell)
}
$
of pairs of positive integers, there exists a zero-entropy minimal
$\mathbb Z$-system $(X,T)$, which is an almost one-to-one extension
of an irrational circle rotation, such that
\[
\M_T^e(X)=\{\mu_1,\ldots,\mu_\ell\},
\qquad
\Prof(X,T)=\mathcal P.
\]
\end{theorem}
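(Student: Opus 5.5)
We would realize each pair $(\iota_j,b_j)$ by a separate "building block" ergodic component and then glue the $\ell$ blocks into one minimal almost one-to-one extension of a single irrational rotation $R_\alpha$ on $\Torus=\mathbb R/\Z$. The compact multiplicity $\iota_j$ comes from a skew product over $R_{\alpha}$ with fiber $\Z/\iota_j\Z$. Measure-theoretically, this is a rotation on the compact group $\Torus\times\Z/\iota_j$ via a $\Z/\iota_j$-valued cocycle. That rotation is isomorphic to the rotation by $\alpha/\iota_j$ on the circle, and it factors $\iota_j$-to-one onto $R_\alpha$. The residual multiplicity $b_j$ comes from a further $b_j$-point extension with \emph{weakly mixing-type} relative behavior. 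Its conditional measures over the compact factor are uniform on $b_j$ points, yet no further eigenvalues are created. The standard model here is a Sturmian/Toeplitz-like coding in which a cocycle over the rotation takes values in the symmetric group $S_{b_j}$ and is relatively weakly mixing, so that $h^*_{\mu_j}=\log b_j$ by \cite[Theorem~2.9]{LiuWangXu2025}.

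Concretely, the plan is the following. (1) Fix irrational $\alpha$ and choose, for each $j$, a cocycle $\phi_j:\Torus\to\Z/\iota_j\times S_{b_j}$ that is continuous off a countable set of discontinuity points on the orbit of finitely many points. Its $\Z/\iota_j$ coordinate must be a non-coboundary of order exactly $\iota_j$, so that the group extension has eigenvalue group enlarged by exactly $\Z/\iota_j$. Its $S_{b_j}$ coordinate, composed with the action on $\{1,\dots,b_j\}$, must be relatively weakly mixing over that compact extension. One way to obtain it is to take interval-exchange-type cocycles with values generating $S_{b_j}$ and use an ergodicity/essential-values argument (Schmidt's essential range) to show that the associated skew product is ergodic and relatively weakly mixing. (2) Realize all $\ell$ cocycles on one symbolic space, using disjoint sets of discontinuities in distinct orbits. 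Take $X$ to be the orbit closure of the coding of $\Torus\times\bigsqcup_j(\Z/\iota_j\times\{1,\ldots,b_j\})$ by the upper/lower semicontinuous lifts, in the style of Sturmian or Denjoy-type constructions. Interleave the $\ell$ blocks by a further "mixing" discontinuity that allows passage between blocks only along the singular orbits. This makes $X$ minimal and almost one-to-one over $R_\alpha$, while keeping each block's skew-product measure invariant and ergodic. (3) Identify $\M^e_T(X)$. Away from the countable singular set the system is exactly the disjoint union of the $\ell$ skew products, each uniquely ergodic relative to Lebesgue. Hence the ergodic measures are precisely $\mu_1,\dots,\mu_\ell$. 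Zero entropy follows because $X$ is an almost one-to-one extension of a zero-entropy rotation whose fibers are finite off a null set; alternatively one can use the finite-degree bound together with Theorem~\ref{thm:intro-it-tuples}'s converse-direction estimates. (4) Compute the profile. The maximal compact factor of $\mu_j$ is the rotation on $\Torus\times\Z/\iota_j$, giving $\iota_\mu=\iota_j$. The disintegration over it is uniform on $b_j$ points by relative weak mixing plus finiteness, giving $b_\mu=b_j$. Theorem~\ref{thm:intro-degree} then provides a consistency check, namely $\tdeg=\sum_j\iota_jb_j$.

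The main obstacle is step (1)–(4)'s requirement that the residual $b_j$-point extension add \emph{no} new compact phases: the maximal compact factor must be exactly the $\iota_j$-fold extension. Mere ergodicity of the $S_{b_j}$-skew product is not enough, since a transitive permutation cocycle could still hide a finite-group eigenvalue. We would try to arrange that the $S_{b_j}$-part has essential range all of $S_{b_j}$ and that the joint cocycle has essential range $\Z/\iota_j\times S_{b_j}$. A Mackey-range computation then shows that the only measurable eigenfunctions of the $\{1,\dots,b_j\}$-extension come from characters of $\Z/\iota_j\times S_{b_j}$ trivial on the point stabilizer $S_{b_j-1}$, and only the $\Z/\iota_j$ characters qualify. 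Achieving full essential range with finitely many jump discontinuities follows the classical Veech/Merrill technique of choosing the jump points with rationally independent positions relative to $\alpha$. We expect this essential-range verification to be the technically heaviest part.
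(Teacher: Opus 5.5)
Your step (2) is where the argument breaks, and it is the heart of the theorem. In a Sturmian/Denjoy-type coding whose discontinuities lie on finitely (or countably) many $R_\alpha$-orbits, each coordinate of the code is locally constant near every nonsingular base point. So, as your step (3) itself asserts, the fibre of $X\to\Torus$ over every point outside the countable singular set is the whole skew-product fibre, with $\sum_j\iota_jb_j$ points. Singleton fibres could then lie only over a countable set, which is never residual. Hence $X$ is \emph{not} an almost one-to-one extension of $R_\alpha$ unless $\mathcal P=\multiset{(1,1)}$.

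The same local constancy destroys minimality when $\ell\ge2$. A limit of codes of block-$j$ points, taken at a nonsingular base point, is again the code of a block-$j$ point. So the orbit closure of a nonsingular block-$j$ point misses every nonsingular point of the other blocks, and a ``mixing discontinuity'' confined to singular orbits cannot change this.

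In fact non-regularity is forced. If the singleton fibres of an almost one-to-one extension of $R_\alpha$ had positive Haar measure, they would be conull by ergodicity. Every invariant measure would then be the lift of Haar measure, and the profile would be $\multiset{(1,1)}$. So the singleton fibres must form a residual Haar-null set, and finitely many jump points cannot produce that. The paper gets it from the relative Furstenberg--Weiss theorem of Downarowicz and Lacroix (Theorem~\ref{thm:DL}). It forms the disjoint union $\bigsqcup_i B_{b_i}\times C_{\iota_i}$, an extension of $(\Torus,R_\alpha)$ that is in general neither minimal nor almost one-to-one. It then replaces this by a minimal almost one-to-one extension of $R_\alpha$ that is Borel isomorphic to it on full sets, over $\Torus$, with an affine homeomorphism of the invariant-measure simplices. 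Almost one-to-oneness identifies $R_\alpha$ as $\Xeq$, and $\iota_{\mu_i}$, $b_{\mu_i}$ are transported through these isomorphisms.

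Step (1) has a further flaw. A non-coboundary $\Z/\iota_j$-cocycle gives an ergodic extension, but ergodicity does not create eigenvalues. You need $\chi\circ\phi_j$ to be measurably cohomologous to a nontrivial constant for each nontrivial character $\chi$ of $\Z/\iota_j$. If no such character has this property, the $\Z/\iota_j$-coordinate is relatively weakly mixing over $R_\alpha$, and it contributes to $b_\mu$ rather than to $\iota_\mu$.

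Conversely, if the transfer function becomes continuous on your coded model, the new phase is a \emph{topological} eigenvalue, and $\iota_\mu$ collapses to $1$ relative to the true $\Xeq$. For instance, the $\Z/\iota_j$-extension of $R_\alpha$ by $\mathbf 1_{[1-\alpha,1)}$ is $R_{\alpha/\iota_j}$. Its natural coding is an almost one-to-one extension of $R_{\alpha/\iota_j}$, not of $R_\alpha$. The compact phases must be measurable but not continuous.

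In the paper this is also supplied by the Downarowicz--Lacroix step, so the compact coordinate can simply be the cyclic rotation $r_\iota$. Total strict ergodicity of the Haupt--J\"ager blocks (Theorem~\ref{thm:common-blocks}) keeps $B_b\times C_\iota$ strictly ergodic. Those blocks also replace your essential-range analysis for the residual multiplicity.
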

The construction combines the finite-multiplicity extensions of
irrational rotations developed in \cite{HauptJaeger2025} with the
relative Furstenberg--Weiss theorem of
\cite{DownarowiczLacroix1998}.

\subsection*{The paper is organized as follows.}
Section~\ref{sec:preliminaries} collects the necessary preliminaries.
Section~\ref{sec:compact-phase-decompositions} develops mixed
compact-phase joinings, proves the local realization theorem,
Theorem~\ref{thm:mixed-phase-local-realization}, and derives the
multiplicity obstructions needed for the converse implication of the
main theorem.
Section~\ref{sec:main-proof}  proves
Theorems~\ref{thm:main} and~\ref{thm:intro-degree}.
Section~\ref{sec:sequence-entropy}
proves Theorem~\ref{thm:intro-it-tuples} and derives the uniquely
ergodic realization in
Theorem~\ref{thm:intro-unique-virtual-abelian}.
Finally, Section~\ref{sec:realization} proves
Theorem~\ref{thm:finite-profile-realization} and derives a characterization in the uniquely ergodic case.

\section{Preliminaries}
\label{sec:preliminaries}

\subsection{Basic notation and conventions}
Throughout the paper,
$
\mb N:=\{1,2,\ldots\},
$
and, for $N\in\mb N$, we write
$
[N]:=\{1,\ldots,N\}.
$ We use the conventions
\[
n\cdot\infty=\infty\cdot n=\infty,\qquad
\exp(\infty)=\infty,\qquad\text{for every $n\in\mathbb N$.}
\]

We always assume that $G$ is a countably infinite discrete amenable group. A sequence
$
\Fcal=(F_n)_{n\ge1}
$
of nonempty finite subsets of $G$ is called a \emph{(left) F{\o}lner
sequence} if
\[
\lim_{n\to\infty}\frac{|hF_n\triangle F_n|}{|F_n|}
= 0
\qquad
\text{for every }h\in G.
\]
Since $G$ is amenable,  F{\o}lner sequences exist.

Let $X$ be a compact metric space and put
\[
\Delta^N_{\mr{fat}}(X)
:=
\left\{
(x_1,\ldots,x_N)\in X^N:
x_i=x_j\text{ for some }i<j
\right\}.
\]
A tuple in $X^N\setminus\Delta_{\mathrm{fat}}^N(X)$ is called \emph{essential}. 

For topological spaces $X$ and $Z$, we denote by
\[
\operatorname{pr}_X:X\times Z\to X,
\qquad
\operatorname{pr}_Z:X\times Z\to Z
\]
the canonical coordinate projections.

A \emph{$G$-system} is a compact metric space $X$
equipped with a continuous action.  
A $G$-system $(X,G)$ is called \emph{equicontinuous} if  for every $\varepsilon>0$, there exists $\delta>0$ such that \[ d_X(x,x')<\delta \quad\Longrightarrow\quad d_X(gx,gx')<\varepsilon \qquad\text{for all }g\in G. \] For a compact metrizable $G$-system, equicontinuity is equivalent to the existence of a compatible $G$-invariant metric. 

We write $\Prob(X)$ for the set of Borel probability measures on
$X$,
$\M_G(X)$ for the set of $G$-invariant Borel probability measures on
$X$, and $\M_G^e(X)$ for the subset of ergodic measures. Since $G$ is amenable, the set $\M_G^e(X)$ is nonempty.

A \emph{probability-measure-preserving $G$-system}, abbreviated as a
\emph{p.m.p.\ $G$-system}, is a standard probability space
$
(Z,\mc B_Z,m_Z)
$
equipped with a measurable action
such that
\[
g_*m_Z=m_Z
\qquad
\text{for every }g\in G.
\]
In particular, each $G$-system with an invariant measure is a p.m.p. $G$-system.
For a family $(a_\mu)_{\mu\in\M_G^e(X)}\subseteq\N\cup\{\infty\}$, we use
throughout the extended-sum convention
$$
 \sum_{\mu\in\M_G^e(X)}a_\mu
 :=
 \sup\left\{
 \sum_{\mu\in F}a_\mu:
 F\subseteq\M_G^e(X)\text{ finite}
 \right\}.
$$

We use both topological and measure-theoretic factor maps:
\begin{enumerate}[label=\textup{(\roman*)},leftmargin=2.2em]
\item
Let $(Z,G)$ and $(Y,G)$ be $G$-systems.
A continuous surjection
$
p:Z\to Y
$
is called a \emph{topological factor map} if
\[
p(gz)=gp(z)
\qquad
\text{for every }g\in G\text{ and }z\in Z.
\]

\item
Let $(Z,m_Z,G)$ and $(Y,m_Y,G)$ be
p.m.p. $G$-systems.
A measurable map
$
p^0:Z\to Y
$
is called a \emph{measure-theoretic factor map} if
\[
(p^0)_*m_Z=m_Y
\qquad\text{and}\qquad
p^0(gz)=gp^0(z)
\quad
\text{for }m_Z\text{-a.e. }z\in Z\text{ and for every $g\in G$}.
\]
\end{enumerate}

\subsection{Averaging along F{\o}lner sequences}

Let $(X,G)$ be a $G$-system with $\mu\in\mc M_G(X)$. For $f\in L^2(\mu)$ and a F\o lner sequence
$\Fcal=(F_n)_{n\ge1}$, write
\[
 A_n^{\Fcal}f(x):=\frac1{|F_n|}\sum_{g\in F_n}f(gx).
\]

We use the mean ergodic theorem for amenable group actions; see, for
example, \cite[Theorem~5.5]{BergelsonEtAl2006}.
\begin{lemma}
\label{lem:mean-ergodic} The average
$A_n^{\Fcal}f$ converges in $L^2(\mu)$ to
$\mathbb E_\mu(f\mid\mc I_\mu)$ for every $f\in L^2(\mu)$ and every 
F{\o}lner sequence $\Fcal$.  In particular, if $\mu$ is ergodic, then
\[
\lim_{n\to\infty} A_n^{\Fcal}f=\int f\,d\mu
 \qquad\text{in }L^2(\mu).
\]
Here $\mc I_\mu$ is the $\sigma$-algebra generated by all invariant measurable sets.
\end{lemma}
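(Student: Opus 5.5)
The statement to prove is Lemma~\ref{lem:mean-ergodic}, the mean ergodic theorem along F{\o}lner sequences. The plan is the standard Hilbert-space argument of von Neumann type, adapted to amenable groups.

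Let $U_g$ be the Koopman representation on $L^2(\mu)$, $U_gf=f\circ g$. Invariance of $\mu$ makes each $U_g$ unitary. Then $A_n^{\Fcal}=\frac1{|F_n|}\sum_{g\in F_n}U_g$, and these operators all have norm at most $1$.

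Let $H_{\mathrm{inv}}=\{f:U_gf=f\ \forall g\}$. Then $H_{\mathrm{inv}}$ coincides with $L^2(\mu,\mc I_\mu)$: an invariant $L^2$ function is measurable with respect to the completion of $\mc I_\mu$, since its level sets are invariant modulo null sets, and for countable $G$ such sets can be replaced by genuinely invariant ones. Hence the orthogonal projection onto $H_{\mathrm{inv}}$ is $\mathbb E_\mu(\cdot\mid\mc I_\mu)$.

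Next, decompose
\[
L^2(\mu)=H_{\mathrm{inv}}\oplus\overline{\operatorname{span}}\{U_hu-u:h\in G,\ u\in L^2(\mu)\}.
\]
The key point is that $f\perp(U_hu-u)$ for all $h,u$ is equivalent to $U_{h}^*f=f$ for all $h$. Since $U_h^*=U_{h^{-1}}$, this says exactly that $f$ is invariant.

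The convergence is then checked on each summand.
\begin{itemize}
\item On $H_{\mathrm{inv}}$, $A_n^{\Fcal}f=f$ trivially.
\item On a coboundary $U_hu-u$, a direct computation gives
\[
A_n^{\Fcal}(U_hu-u)=\frac1{|F_n|}\Bigl(\sum_{g\in F_n}U_gU_h u-\sum_{g\in F_n}U_gu\Bigr).
\]
Since $U_gU_h=U_{hg}$ (because $(f\circ h)\circ g$ is evaluated at $hgx$), the first sum is $\sum_{g'\in hF_n}U_{g'}u$. The difference is therefore bounded in norm by $\frac{|hF_n\triangle F_n|}{|F_n|}\|u\|\to0$ by the left F{\o}lner property.
\end{itemize}

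The uniform bound $\|A_n^{\Fcal}\|\le1$ extends convergence to zero from the span of coboundaries to its closure by an $\varepsilon/2$ argument. Combining the two summands gives $A_n^{\Fcal}f\to\mathbb E_\mu(f\mid\mc I_\mu)$ in $L^2(\mu)$.

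In the ergodic case $\mc I_\mu$ is trivial mod $\mu$, so the conditional expectation is the constant $\int f\,d\mu$.

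The main obstacle is matching the order of composition with the left/right F{\o}lner condition. With left actions and the given left F{\o}lner sets, the computation above uses $U_gU_h=U_{hg}$, so the shift is $F_n\mapsto hF_n$, which the hypothesis controls. A secondary point is identifying $H_{\mathrm{inv}}$ with $\mc I_\mu$-measurable functions mod null sets.
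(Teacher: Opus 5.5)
Your proof is correct. It is the standard von Neumann splitting $L^2(\mu)=H_{\mathrm{inv}}\oplus\overline{\operatorname{span}}\{U_hu-u\}$, and you handle the order of composition $U_gU_h=U_{hg}$ correctly, so the left F{\o}lner condition $|hF_n\triangle F_n|/|F_n|\to0$ is exactly what kills the coboundaries. The paper does not prove this lemma itself; it cites \cite[Theorem~5.5]{BergelsonEtAl2006}, and your argument is essentially the proof behind that reference.
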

As a direct corollary, we have the following.
\begin{lemma}
\label{lem:uniform-lower-bound}
Let $f\in C(X)$ and suppose that
\[
 \int f\,d\mu\ge c
 \qquad
 \text{for every }\mu\in\M_G(X).
\]
Then, for every $x\in X$ and every F{\o}lner sequence $\Fcal$,
\[
 \liminf_{n\to\infty}A_n^{\Fcal}f(x)\ge c.
\]
\end{lemma}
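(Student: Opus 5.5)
We argue by contradiction, using the standard characterization of $\liminf$ along a subsequence combined with weak$^*$ compactness of $\Prob(X)$. Fix $x\in X$ and a F{\o}lner sequence $\Fcal=(F_n)_{n\ge1}$. Choose a subsequence $(n_k)$ along which $A_{n_k}^{\Fcal}f(x)\to\liminf_{n}A_n^{\Fcal}f(x)$. Consider the empirical measures
\[
 \mu_k:=\frac1{|F_{n_k}|}\sum_{g\in F_{n_k}}\delta_{gx}\in\Prob(X),
\]
so that $A_{n_k}^{\Fcal}f(x)=\int f\,d\mu_k$. Since $X$ is compact metric, $\Prob(X)$ is weak$^*$ compact and metrizable. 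Passing to a further subsequence, we may therefore assume $\mu_k\to\mu$ weak$^*$ for some $\mu\in\Prob(X)$.

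The key step is to check that $\mu\in\M_G(X)$. For $h\in G$ and $\varphi\in C(X)$ we compute
\[
 \Bigl|\int\varphi\circ h\,d\mu_k-\int\varphi\,d\mu_k\Bigr|
 =\frac1{|F_{n_k}|}\Bigl|\sum_{g\in hF_{n_k}}\varphi(gx)-\sum_{g\in F_{n_k}}\varphi(gx)\Bigr|
 \le\|\varphi\|_\infty\frac{|hF_{n_k}\triangle F_{n_k}|}{|F_{n_k}|},
\]
and the right-hand side tends to $0$ by the F{\o}lner property. Since $\varphi\circ h$ is continuous, weak$^*$ convergence gives $\int\varphi\circ h\,d\mu=\int\varphi\,d\mu$ for all $\varphi$, and hence $h_*\mu=\mu$. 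Thus $\mu$ is $G$-invariant.

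Finally, because $f$ is continuous,
\[
 \liminf_{n}A_n^{\Fcal}f(x)=\lim_k\int f\,d\mu_k=\int f\,d\mu\ge c,
\]
which is the desired conclusion.

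No step is a real obstacle. The one point to get right is the left/right convention in the invariance computation: with the left F{\o}lner condition $|hF_n\triangle F_n|/|F_n|\to0$ and averages $\sum_{g\in F_n}f(gx)$, the translate $\varphi\circ h$ reindexes the sum over $hF_n$, which is exactly the set controlled by the left F{\o}lner condition. Although the paper calls this a corollary of Lemma~\ref{lem:mean-ergodic}, the pointwise statement at every $x$ is most cleanly proved by the weak$^*$ argument above rather than via $L^2$ convergence.
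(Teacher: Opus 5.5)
Your proof is correct and is essentially the paper's argument: a weak$^*$ cluster point of the empirical measures $\frac1{|F_{n}|}\sum_{g\in F_{n}}\delta_{gx}$, taken along a subsequence realizing the $\liminf$, is $G$-invariant by the left F{\o}lner condition, so its integral of $f$ is at least $c$. The only difference is framing. You announce a contradiction but then argue directly, which is harmless; the paper phrases the same argument by contradiction.
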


\begin{proof}
If the conclusion failed, pass to a subsequence along which the averages
converge to a number smaller than $c$.  A weak* cluster point of
$
 \frac1{|F_n|}\sum_{g\in F_n}\delta_{gx}
$
is $G$-invariant, because
$|hF_n\triangle F_n|/|F_n|\to0$ for every $h\in G$.  Its integral of $f$
is smaller than $c$, a contradiction.
\end{proof}

\subsection{Hyperspaces and compact-valued maps}
\label{subsec:hyperspaces}

Let $(Z,d_Z)$ be a compact metric space.  We denote by
$\mc H(Z)$ the hyperspace of all nonempty closed subsets of $Z$.
Since $Z$ is compact, these are precisely its nonempty compact subsets.
For $C,D\in\mc H(Z)$, their Hausdorff distance is
\[
d_{\mathrm H}(C,D)
:=
\max\left\{
\sup_{z\in C}d_Z(z,D),
\sup_{w\in D}d_Z(w,C)
\right\}.
\]
The metric space $(\mc H(Z),d_{\mathrm H})$ is compact.

A $G$-system $(Z,G)$ induces an action on
$\mc H(Z)$ by
\[
gC:=\{gz:z\in C\}.
\]
If $d_Z$ is $G$-invariant, then the induced action on
$\mc H(Z)$ is also isometric:
\[
d_{\mathrm H}(gC,gD)=d_{\mathrm H}(C,D)
\qquad
(g\in G,\ C,D\in\mc H(Z)).
\]

A compact-valued map
$
A:X\to\mc H(Z)
$
is called equivariant if
\[
A(gx)=gA(x)
\qquad
(g\in G,\ x\in X).
\]
Equivalently, its graph
\[
\Gamma_A
:=
\{(x,z)\in X\times Z:z\in A(x)\}
\]
is $G$-invariant.  A closed graph guarantees upper semicontinuity of
$A$, but does not in general imply Hausdorff continuity.  The following
lemma shows that equivariance upgrades upper semicontinuity to Hausdorff
continuity when the source system is minimal and the action on the range is
isometric. 
\begin{lemma}
\label{lem:automatic-fiber-continuity}
Let $(X,G)$ be a minimal $G$-system, let $(Z,G)$ be a $G$-system with a $G$-invariant metric, and let
$
\Gamma\subseteq X\times Z
$
be a closed $G$-invariant set whose first projection is $X$. 
Then the map
\[
A:X\to\mc H(Z),\qquad x\mapsto A(x):=\{z\in Z:(x,z)\in\Gamma\}
\]
is continuous and equivariant.
\end{lemma}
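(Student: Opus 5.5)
Equivariance is immediate: since $\Gamma$ is $G$-invariant, $(x,z)\in\Gamma$ if and only if $(gx,gz)\in\Gamma$, so $A(gx)=gA(x)$. Each $A(x)$ is nonempty because the first projection of $\Gamma$ is $X$, and it is closed because $\Gamma$ is closed. Hence $A$ takes values in $\mc H(Z)$. Closedness of $\Gamma$ together with compactness of $Z$ also gives upper semicontinuity. Concretely, if $x_n\to x$ and $z_n\in A(x_n)$ with $z_n\to z$, then $z\in A(x)$. Equivalently, $\sup_{z\in A(x_n)}d_Z(z,A(x))\to0$. So the only remaining task is lower semicontinuity: for each $z\in A(x)$, we need $d_Z(z,A(x_n))\to 0$.

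For this I will use the standard semicontinuity argument. An upper semicontinuous compact-valued map into a compact metric space is continuous on a residual set of points. Indeed, $x\mapsto A(x)$ is a Baire class one map into $(\mc H(Z),d_{\mathrm H})$: it is the pointwise limit of continuous approximants built by thickening $\Gamma$, or one can use the Fort--Kuratowski theorem. Since $X$ is compact metric, hence Baire, this yields a point $x_0$ at which $A$ is Hausdorff continuous.

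Now fix $\varepsilon>0$ and choose an open neighbourhood $U$ of $x_0$ such that $d_{\mathrm H}(A(x'),A(x_0))<\varepsilon$ for all $x'\in U$. Consequently $d_{\mathrm H}(A(x'),A(x''))<2\varepsilon$ for all $x',x''\in U$. The invariance of $d_Z$ makes the induced action on $\mc H(Z)$ isometric. Combined with equivariance, for every $g\in G$ and all $x',x''\in g^{-1}U$ we get
\[
d_{\mathrm H}(A(x'),A(x''))=d_{\mathrm H}(gA(x'),gA(x''))=d_{\mathrm H}(A(gx'),A(gx''))<2\varepsilon .
\]
By minimality, $\bigcup_{g\in G}g^{-1}U=X$. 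By compactness, finitely many translates cover $X$. Let $\delta>0$ be a Lebesgue number of this finite cover. Then $d_X(x',x'')<\delta$ implies $d_{\mathrm H}(A(x'),A(x''))<2\varepsilon$. This shows that $A$ is in fact uniformly continuous, which completes the proof.

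The main obstacle is obtaining a single point of continuity. The cleanest route I would try first is to invoke the classical theorem that an upper semicontinuous compact-valued map on a Baire space is continuous at a residual set of points (Fort's theorem). If a self-contained argument is preferred, one can argue as follows. For a countable base $\{W_k\}$ of $Z$, the sets $\{x:A(x)\cap \overline{W_k}\neq\emptyset\}$ are closed. Points of discontinuity lie in the union over $k$ of the boundaries of these closed sets. Each such boundary is nowhere dense, so the set of continuity points is residual.
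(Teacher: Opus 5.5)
Your proof is correct, and it has the same skeleton as the paper's. In both, you first obtain local regularity of $A$ somewhere, and then transport it to all of $X$ using that the induced action on $\mathcal H(Z)$ is isometric and that $X$ is minimal. The difference is in the first step.

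You produce a genuine continuity point $x_0$. You get it either from Fort's theorem or from the Baire category theorem applied to the countably many closed sets $\{x: A(x)\cap\overline{W_k}\ne\emptyset\}$. You then spread it using a finite cover by translates $g^{-1}U$ and a Lebesgue number. Your claim that discontinuity points lie in the union of the boundaries of these closed sets is right: off those boundaries $A$ is lower semicontinuous, and together with upper semicontinuity this gives Hausdorff continuity.

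The paper never needs an actual continuity point. For each fixed $\varepsilon$, it covers $Z$ by finitely many compact sets $C_1,\dots,C_m$ of diameter $<\varepsilon$. It then shrinks an open set $m$ times so that the result lies either inside or outside each closed set $H_j=\{x:A(x)\cap C_j\ne\emptyset\}$. This yields a nonempty open set on which $A$ has oscillation $<\varepsilon$, with no appeal to Baire category. The set $\Omega_\varepsilon$ of points with $\varepsilon$-small local oscillation is then open, nonempty and $G$-invariant, hence equal to $X$.

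Your self-contained argument is essentially the countable, all-$\varepsilon$-at-once version of this finite shrinking, which is why it needs Baire. The paper's route is more elementary. Yours gives the extra classical fact that $A$ is continuous on a residual set before minimality is used, and it states uniform continuity explicitly, though in the paper that follows from compactness anyway.

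One small remark: your aside that $A$ is a pointwise limit of continuous approximants obtained by thickening $\Gamma$ is not justified as written, since Hausdorff continuity of such thickened maps is not automatic. It is also unnecessary, because the Fort/Baire argument you give is complete.
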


\begin{proof}
Equivariance follows from the invariance of $\Gamma$.  Fix
$\varepsilon>0$, and let $\Omega_\varepsilon$ be the set of points
$x\in X$ for which some open neighborhood $U\ni x$ satisfies
\begin{equation}\label{eq:automatic-small-oscillation}
d_{\mathrm H}(A(y),A(y'))<\varepsilon
\qquad(y,y'\in U).
\end{equation}
The set $\Omega_\varepsilon$ is open.  We show that it is nonempty.

Choose compact sets $C_1,\ldots,C_m\subseteq Z$ such that
\[
Z=\bigcup_{j=1}^m C_j,
\qquad
\diam(C_j)<\varepsilon.
\]
For every $j\in[m]$, put
\[
H_j:=\{x\in X:A(x)\cap C_j\ne\emptyset\}.
\]
Each $H_j$ is closed as
$
H_j=\operatorname{pr}_X\bigl(\Gamma\cap(X\times C_j)\bigr).
$
Starting from any nonempty open set $U_0\subseteq X$, recursively choose
nonempty open sets
$
U_m\subseteq\cdots\subseteq U_1\subseteq U_0
$
so that, for each $j$, either $U_j\subseteq H_j$ or
$U_j\cap H_j=\emptyset$.  Put $V:=U_m$. 
For any
 $x,y\in V$ and
$z\in A(x)$, we may choose $j$ with $z\in C_j$, and so $x\in H_j$.  The
chosen alternative forces $V\subseteq H_j$, and hence some
$z'\in A(y)\cap C_j$, and $d_Z(z,z')<\varepsilon$.  Interchanging
$x$ and $y$ gives
$
d_{\mathrm H}(A(x),A(y))<\varepsilon.
$
Thus $\Omega_\varepsilon\ne\emptyset$.

As the action on $Z$  is isometric,
$\Omega_\varepsilon$ is $G$-invariant.  Since $\Omega_\varepsilon$ is nonempty, open, and $G$-invariant,
minimality gives $\Omega_\varepsilon=X$.
\end{proof}

\subsection{Compact homogeneous models}

We recall the compact homogeneous models needed in both the topological
and measure-theoretic categories.
\subsubsection{Topological compact models}
The compact homogeneous-space representation of minimal equicontinuous
systems is classical;  see,
for example, \cite[Chapter~2]{Auslander1988} and
\cite{Glasner2003}. We include the proofs in order to record the
precise description of the conditional measures on factor fibers and the
measurable-to-topological factor principle needed below.

\begin{lemma}
\label{lem:homogeneous-factor-fibers}
Let $(Z,G)$ be a minimal equicontinuous system.
\begin{enumerate}[label=\textup{(\alph*)},leftmargin=2.2em]
\item There are a compact metrizable group $K$, a closed subgroup $H\le K$,
      and a homomorphism $\alpha:G\to K$ with dense image such that $(Z,G)$
      is conjugate to the left-translation action on $K/H$.  The system is
      uniquely ergodic, with invariant measure induced by normalized Haar
      measure on $K$.
\item If $p:(Z,G)\to(Y,G)$ is a factor map between minimal equicontinuous
      systems, all fibers of $p$ are homeomorphic and have the same
      cardinality.  The disintegration of the unique invariant probability
      measure of $Z$ over $p$ has full support on every fiber.  If the fibers
      are finite, the conditional measure is uniform on each fiber.
\end{enumerate}
\end{lemma}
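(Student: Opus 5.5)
For part (a) I will use the Ellis group of the system. Since $(Z,G)$ is equicontinuous, fix a compatible $G$-invariant metric $d_Z$. Then the closure $K$ of the image of $G$ in the isometry group $\Isom(Z,d_Z)$, taken with the topology of uniform convergence, is a compact metrizable group. This follows from Arzel\`a--Ascoli, since isometries are $1$-Lipschitz and $Z$ is compact. Let $\alpha:G\to K$ be the canonical homomorphism; by construction its image is dense.

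Fix $z_0\in Z$ and let $H:=\{k\in K:kz_0=z_0\}$, which is a closed subgroup. The evaluation map $K\to Z$, $k\mapsto kz_0$, is continuous, and its image is compact and contains the orbit $Gz_0$. By minimality this orbit is dense, so the evaluation map is surjective. It therefore descends to a continuous bijection $K/H\to Z$, which is a homeomorphism because $K/H$ is compact and $Z$ is Hausdorff. The map intertwines left translation by $\alpha(g)$ with the action of $g$.

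For unique ergodicity, let $\lambda$ be a $G$-invariant measure on $K/H$. By weak* continuity of the $K$-action on measures, $\lambda$ is invariant under the closure $K$. A $K$-invariant probability measure on $K/H$ is unique, namely the image of normalized Haar measure. To see this, average: for $f\in C(K/H)$,
\[
\int f\,d\lambda=\int_K\int f(kx)\,d\lambda(x)\,dm_K(k)=\int_K f(kH)\,dm_K(k)
\]
by Fubini, using that $k\mapsto kxH$ pushes $m_K$ to the same measure for every $x$ (right invariance of Haar measure on compact groups).

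For part (b), I will represent both systems through one group. Apply (a) to $Z$, giving $Z=K/H$ with base point $z_0=eH$, and let $y_0:=p(z_0)$. Put $L:=\{k\in K:p(kz_0)=y_0\}$. The key point is that $p(kz)=kp(z)$ holds for $k\in K$, where $K$ acts on $Y$. This needs an action of $K$ on $Y$. I obtain it by noting that uniform convergence of $\alpha(g_n)$ on $Z$ forces uniform convergence of $g_n$ on $Y$. Indeed, $p$ is uniformly continuous and surjective, so $g_ny=g_np(z)=p(g_nz)$ with a modulus of continuity independent of $z$. Hence $K$ acts continuously on $Y$ and $p$ is $K$-equivariant, by density of $\alpha(G)$ and continuity.

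Then $L$ is a closed subgroup containing $H$, and $Y\cong K/L$. The fiber $p^{-1}(kL)$ equals $kL/H$, and $z\mapsto kz$ gives homeomorphisms between fibers, so all fibers are homeomorphic to $L/H$. In particular they have equal cardinality.

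For the conditional measures, the Haar-induced measure on $K/H$ disintegrates over $K/L$ as $y=kL\mapsto k_*(m_L\text{ pushed to }L/H)$. This is the standard quotient integral formula $\int_K f=\int_{K/L}\int_L f(kl)\,dm_L(l)$, and the formula is independent of the representative $k$ by right invariance of $m_L$. Since Haar measure on $L$ has full support, its image has full support on $L/H$, and translating by $k$ preserves this. When $L/H$ is finite, the image of $m_L$ is the uniform measure, since $L$ acts transitively on $L/H$ and $m_L$ is invariant. Disintegrations are unique almost everywhere, and this explicit choice is continuous and defined on every fiber, which justifies "every fiber".

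The main obstacle is extending the $K$-action to $Y$ so that $p$ is $K$-equivariant. I handle this with the uniform-continuity argument above.
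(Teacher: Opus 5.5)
Your proposal is correct and follows essentially the paper's argument: realize $p$ as the quotient map $K/H\to K/L$ of homogeneous spaces of a single compact group, then disintegrate using Haar measure on the stabilizer $L$ of $y_0$, pushed to $L/H$ and translated to the other fibers. The only difference is cosmetic. You get the common group by pushing the action of $K=\overline{\alpha(G)}\subseteq\Isom(Z)$ down to $Y$ via uniform continuity of $p$, whereas the paper takes the closure of the diagonal image of $G$ in $\Isom(Z)\times\Isom(Y)$; by your own argument, that group projects isomorphically onto your $K$. One small slip: independence of the representative $k$ actually uses left invariance of $m_L$, not right invariance, but this is harmless because compact groups are unimodular.
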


\begin{proof}
(a) is the standard compact homogeneous-space representation
cited above.  By taking $K$ to be the closure of the image of $G$ in
$\Isom(Z)$, the homomorphism $\alpha:G\to K$ has dense image.  Unique
ergodicity follows from the uniqueness of the invariant probability measure
on the compact homogeneous space $K/H$.

For (b), let
\[
\rho_Z:G\to\Isom(Z)
\qquad \text{and}\qquad
\rho_Y:G\to\Isom(Y)
\]
be the homomorphisms induced by the two actions, and put
\[
L:=
\overline{
\left\{
\bigl(\rho_Z(g),\rho_Y(g)\bigr):g\in G
\right\}}
\subseteq
\Isom(Z)\times\Isom(Y),
\]
where the closure is taken in the product of the uniform topologies.
The group $L$ acts on $Z$ and $Y$ through the two coordinate
projections:
\[
(\ell_Z,\ell_Y)z:=\ell_Z(z)
\qquad\text{and}\qquad
(\ell_Z,\ell_Y)y:=\ell_Y(y).
\]
The $G$-equivariance and uniform topologies ensure that the factor map $p$ is $L$-equivariant.

Fix $z_0\in Z$ and set $y_0:=p(z_0)$.  Denote the corresponding stabilizers
by
\[
L_{z_0}:=\{\ell\in L:\ell z_0=z_0\},
\qquad\text{and}\qquad
L_{y_0}:=\{\ell\in L:\ell y_0=y_0\}.
\]
Equivariance of $p$ gives
$
L_{z_0}\le L_{y_0},
$
and the orbit maps identify
$
Z\simeq L/L_{z_0}
$ and $Y\simeq L/L_{y_0}$,
under which $p$ becomes the canonical quotient map
$
L/L_{z_0}\to L/L_{y_0}.
$
Consequently,
\begin{equation}\label{eq:homogeneous-orbit-map}
p^{-1}(y_0)\simeq L_{y_0}/L_{z_0},    
\end{equation}
and every other fiber is an $L$-translate of this fiber.  Hence all fibers
are homeomorphic and have the same cardinality.

Let $\lambda_0$ be the push-forward of normalized Haar measure on
$L_{y_0}$ to the homogeneous space
$
L_{y_0}/L_{z_0}.
$
For $y=\ell y_0$, define
$
\lambda_y:=\ell_*\lambda_0.
$
This is independent of the choice of $\ell$, since $\lambda_0$ is
$L_{y_0}$-invariant.  The quotient integral formula for compact groups and
\eqref{eq:homogeneous-orbit-map}
show that
\[
m_Z=\int_Y\lambda_y\,dm_Y(y)\qquad\text{and} \qquad\supp\lambda_y\subset p^{-1}(y),\quad\text{for all }y\in Y.
\]
so $(\lambda_y)_{y\in Y}$ is a version of the disintegration of $m_Z$ over
$p$.  Each $\lambda_y$ has full support because Haar measure has full
support and the relevant quotient maps are surjective.  If the fibers are
finite, the transitive action on each fiber forces $\lambda_y$ to be the
uniform probability measure.
\end{proof}

\subsubsection{Measure-theoretic maximal compact factors}

We now pass from the topological category to the measure-theoretic one.
The functions with relatively compact Koopman orbit generate
a maximal factor with discrete spectrum.  Mackey's representation theorem \cite{Mackey1964}
shows that this factor is again modeled by a compact homogeneous space
$K/H$.  In contrast to the preceding subsection, the resulting
identification and factor map are understood only modulo null sets.
The material in this subsection does not require amenability of the acting
group. 

Let
$
(X,\mu,G)
$
be an ergodic p.m.p.\ $G$-system, and let
$
U_gf(x):=f(g^{-1}x),
$ $
g\in G,$
be its Koopman representation on $L^2(X,\mu)$.  Define the compact
subspace by
\[
\mathcal H_{\mathrm c}(\mu)
:=
\left\{
f\in L^2(X,\mu):
\{U_gf:g\in G\}
\text{ is relatively compact in }L^2(X,\mu)
\right\}.
\]
By \cite{Zimmer1976Extensions}, there is a unique complete $G$-invariant sub-sigma-algebra
$\cK_\mu\subseteq\mc B_X$, modulo $\mu$, such that
\[
L^2(X,\cK_\mu,\mu)=\mathcal H_{\mathrm c}(\mu).
\]
It is called the \emph{maximal compact $\sigma$-algebra} of
$(X,\mu,G)$.  The associated factor is the maximal compact factor of the
system.  In the abelian setting, it is also commonly called the \emph{Kronecker
factor}.

The Koopman representation of the maximal compact factor is a discrete
direct sum of finite-dimensional irreducible representations.  Hence
Mackey's representation theorem
\cite[Theorem~1]{Mackey1964} yields the following compact
homogeneous model.
\begin{theorem}
\label{thm:compact-homogeneous-model}
There exist a compact metrizable group $K_\mu$, a closed subgroup
$H_\mu\le K_\mu$, a homomorphism
$
\alpha_\mu:G\to K_\mu
$
with dense image, and a measure-theoretic factor map
\[
\kappa_\mu:
(X,\mu,G)
\to
(Z_\mu,m_\mu,G),
\qquad
Z_\mu:=K_\mu/H_\mu,
\]
such that
\[
\kappa_\mu^{-1}\bigl(\cB(Z_\mu)\bigr)
=
\cK_\mu
\pmod\mu.
\]
The $G$-action on $Z_\mu$ is given by
$
g(kH_\mu)
=
\alpha_\mu(g)kH_\mu,
$
and $m_\mu$ is the push-forward of normalized Haar measure on
$K_\mu$ under the quotient map
$
K_\mu\to K_\mu/H_\mu.
$
\end{theorem}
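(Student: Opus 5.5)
The plan is to realize the compact subspace $\mathcal H_{\mathrm c}(\mu)$ as the Koopman space of a compact group extension and then apply Mackey's theorem. First, decompose $\mathcal H_{\mathrm c}(\mu)$ as a closed, $G$-invariant subspace. By the standard Peter--Weyl type argument for representations with relatively compact orbits, it is the Hilbert direct sum of finite-dimensional irreducible $G$-invariant subspaces. Indeed, the closure $\overline{U(G)}$ in the strong operator topology restricted to $\mathcal H_{\mathrm c}(\mu)$ is a compact group, and compact group representations split into finite-dimensional irreducibles.

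Next, use that $\mathcal H_{\mathrm c}(\mu)$ is closed under multiplication of bounded elements and complex conjugation. The product of two functions with relatively compact orbit, if bounded, again has relatively compact orbit. Together with Zimmer's result quoted above, this identifies $\mathcal H_{\mathrm c}(\mu)=L^2(X,\cK_\mu,\mu)$ with the $L^2$ space of a $G$-invariant sub-$\sigma$-algebra. Since $\mu$ is standard, $\cK_\mu$ is countably generated mod $\mu$, so there is a factor map onto a standard p.m.p.\ $G$-system $(Z,m,G)$ whose Koopman representation is a discrete sum of finite-dimensional irreducibles. By ergodicity, eigenspaces of the invariant vectors are one-dimensional.

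Now invoke \cite[Theorem~1]{Mackey1964}: an ergodic action with such pure point (discrete) spectrum is conjugate, mod null sets, to translation on a homogeneous space $K/H$. Here $K$ is compact and $G$ maps into $K$ with dense image. Concretely, take $K_\mu:=\overline{U(G)}$, viewed as a compact metrizable group acting unitarily on $L^2(Z,m)$. Metrizability follows from separability of $L^2$. Each element of $K_\mu$ preserves the multiplicative structure, being a strong limit of Koopman operators. By the Mackey--von Neumann point realization, each element therefore comes from a measure-preserving transformation, and the $K_\mu$-action is realized pointwise. Ergodicity of $G$ makes $K_\mu$ act transitively mod null sets. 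The point stabilizer $H_\mu$ then yields $Z\cong K_\mu/H_\mu$, with $m$ being the Haar push-forward by uniqueness of invariant measures on $K_\mu/H_\mu$. Finally, set $\alpha_\mu(g)=U_g|_{\mathcal H_c}$ and compose to get $\kappa_\mu$, with $\kappa_\mu^{-1}\cB(Z_\mu)=\cK_\mu$ by construction.

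The main obstacle is the pointwise realization step: passing from the abstract compact group of unitary operators to a genuine transitive action on a homogeneous space. I will not reprove it; I will simply cite Mackey's theorem, checking only its hypotheses, namely ergodicity and discrete spectrum.
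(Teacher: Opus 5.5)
Your proposal is correct and follows the paper's route. The paper likewise takes $L^2(X,\cK_\mu,\mu)=\mathcal H_{\mathrm c}(\mu)$ from Zimmer, notes that its Koopman representation is a discrete direct sum of finite-dimensional irreducibles, and then invokes \cite[Theorem~1]{Mackey1964}; your sketch via $K_\mu=\overline{U(G)}$, point realization, and essential transitivity of the compact group action simply unpacks what Mackey's theorem delivers.
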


The following result was proved in
\cite[Lemma~3.5]{LiuWangXu2025}; see also
\cite[Theorem~16, p.~53]{HostKra2018} for the underlying argument.
\begin{lemma}
\label{lem:mixed-invariant-algebra}
Let $(X,G)$ be a $G$-system, let
$
\mu_1,\ldots,\mu_N\in\M_G^e(X),
$
and put
$
\mu:=\mu_1\otimes\cdots\otimes\mu_N.
$
Then
\[
\mc I_\mu
\subseteq
\cK_{\mu_1}\otimes\cdots\otimes\cK_{\mu_N}
\pmod \mu.
\]
\end{lemma}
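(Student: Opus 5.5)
We would prove the lemma at the level of Koopman representations, using the splitting of each $L^2(X,\mu_i)$ into its compact part and its orthogonal complement. For each $i\in[N]$ write
\[
L^2(X,\mu_i)=\mathcal H_{\mathrm c}(\mu_i)\oplus\mathcal H_{\mathrm c}(\mu_i)^{\perp}.
\]
Both summands are $G$-invariant closed subspaces. By the Peter--Weyl type description of compact vectors, $\mathcal H_{\mathrm c}(\mu_i)$ is the closed span of all finite-dimensional $G$-invariant subspaces. Hence the representation on $\mathcal H_{\mathrm c}(\mu_i)^{\perp}$ contains no nonzero finite-dimensional subrepresentation. Under the identification $L^2(X^N,\mu)=L^2(X,\mu_1)\otimes\cdots\otimes L^2(X,\mu_N)$, the diagonal Koopman action becomes the tensor product representation. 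Distributing the splitting over the tensor product, $L^2(X^N,\mu)$ is the orthogonal direct sum of $\bigotimes_i\mathcal H_{\mathrm c}(\mu_i)$ and of $G$-invariant summands $W_1\otimes\cdots\otimes W_N$ in which at least one factor equals $\mathcal H_{\mathrm c}(\mu_j)^{\perp}$. The first summand is exactly $L^2(X^N,\cK_{\mu_1}\otimes\cdots\otimes\cK_{\mu_N},\mu)$, by Zimmer's identification $L^2(X,\cK_{\mu_i},\mu_i)=\mathcal H_{\mathrm c}(\mu_i)$.

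The key step is to show that a mixed summand contains no nonzero $G$-invariant vector. Group it as $\pi\otimes\sigma$ with $\pi:=\mathcal H_{\mathrm c}(\mu_j)^{\perp}$ and $\sigma$ the tensor product of the remaining factors. We would use the standard fact that an invariant vector $\xi\in\mathcal H_\pi\otimes\mathcal H_\sigma$ corresponds to a Hilbert--Schmidt operator $T_\xi:\overline{\mathcal H_\sigma}\to\mathcal H_\pi$ intertwining $\bar\sigma$ and $\pi$. Then $T_\xi T_\xi^*$ is a compact, positive, self-adjoint operator on $\mathcal H_\pi$ commuting with $\pi(G)$. If $\xi\neq0$, it has a nonzero eigenvalue, and the corresponding eigenspace is a nonzero finite-dimensional $\pi$-invariant subspace. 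This contradicts the absence of such subspaces in $\mathcal H_{\mathrm c}(\mu_j)^{\perp}$, so $\xi=0$. This operator-theoretic step is where we expect the only real care to be needed: checking the intertwining relation under the conjugate representation, and that $T_\xi$ is Hilbert--Schmidt with $\|T_\xi\|_{\mathrm{HS}}=\|\xi\|$.

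Consequently every $G$-invariant function in $L^2(X^N,\mu)$ lies in $L^2(X^N,\cK_{\mu_1}\otimes\cdots\otimes\cK_{\mu_N},\mu)$. Applying this to $1_E$ for $E\in\mc I_\mu$ shows that $1_E$ agrees $\mu$-a.e.\ with a function measurable for the product of the compact $\sigma$-algebras. Hence $E\in\cK_{\mu_1}\otimes\cdots\otimes\cK_{\mu_N}\pmod\mu$, which is the claimed inclusion. Ergodicity of the $\mu_i$ is needed only to have the maximal compact $\sigma$-algebras available as stated; amenability plays no role.
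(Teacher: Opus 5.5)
Your argument is correct. The paper does not prove this lemma itself: it imports it from \cite[Lemma~3.5]{LiuWangXu2025} and points to \cite[Theorem~16, p.~53]{HostKra2018} for the underlying argument. Your route, which splits each $L^2(X,\mu_i)$ as $\mathcal H_{\mathrm c}(\mu_i)\oplus\mathcal H_{\mathrm c}(\mu_i)^{\perp}$ and then uses the Hilbert--Schmidt intertwiner $T_\xi$ (whose compact, $G$-commuting operator $T_\xi T_\xi^*$ would force a finite-dimensional subrepresentation into $\mathcal H_{\mathrm c}(\mu_j)^{\perp}$), is to my understanding the standard Furstenberg-type argument behind such results. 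As you observe, it does not use amenability, which is consistent with the paper's remark that this subsection does not require amenability.
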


\subsubsection{Measurable rigidity between compact models}

The preceding two subsections describe compact systems in the topological
and measure-theoretic categories, respectively.  Although a
measure-theoretic factor map is initially defined only modulo null sets,
we may compare the measure-theoretic
maximal compact factor of an ergodic measure with the topological maximal
equicontinuous factor of the ambient system.

The following measurable rigidity lemma extends the argument for
$\mathbb Z^d$-actions in \cite[Proposition~10]{Cortez2006}.
\begin{lemma}
\label{lem:measurable-factor-continuous}
Let $(Z,G)$ and $(Y,G)$ be minimal equicontinuous systems\footnote{No amenability assumption on $G$ is required.}, and let
$m_Z$ and $m_Y$ denote their unique invariant probability measures.
Suppose that
$
p^0:(Z,m_Z,G)\to(Y,m_Y,G)
$
is a measure-theoretic factor map. Then there exists a continuous
topological factor map
$
p:(Z,G)\to(Y,G)
$
such that
$
p=p^0$,
$m_Z\text{-a.e.}$
\end{lemma}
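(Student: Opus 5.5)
We prove Lemma~\ref{lem:measurable-factor-continuous} using the compact homogeneous models of Lemma~\ref{lem:homogeneous-factor-fibers}(a). The idea is to pass from $p^0$ to a joining, show that the joining is the graph of a continuous map, and invoke Lemma~\ref{lem:automatic-fiber-continuity}.

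First, consider the graph joining $\lambda:=(\id\times p^0)_*m_Z$ on $Z\times Y$. This measure is $G$-invariant and ergodic, since its projection to $Z$ is the ergodic measure $m_Z$ and it is isomorphic to $(Z,m_Z,G)$. Put
\[
\Gamma:=\supp\lambda\subseteq Z\times Y.
\]
This set is closed and $G$-invariant. Its first projection is closed and contains $\supp m_Z=Z$, because Haar-induced measures have full support. Since $Y$ carries a $G$-invariant metric, Lemma~\ref{lem:automatic-fiber-continuity}, applied to the minimal system $Z$, shows that
\[
A:Z\to\mc H(Y),\qquad z\mapsto\{y:(z,y)\in\Gamma\},
\]
is Hausdorff continuous and equivariant.

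The main step is to show that $A(z)$ is a singleton for every $z$. We argue through the product system $Z\times Y$ with the product isometric metric. This system is equicontinuous, so its orbit closures are minimal and the action on them is distal. Hence the orbit closure $\Gamma'$ of any point of $\Gamma$ is a minimal equicontinuous system. By unique ergodicity of $\Gamma'$, every ergodic measure supported on $\Gamma$ sits on a single minimal component. Therefore $\lambda$ is the Haar-type measure of some minimal component $\Gamma'$, and $\Gamma=\Gamma'$ is minimal.

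Now apply Lemma~\ref{lem:homogeneous-factor-fibers}(b) to the factor map $\operatorname{pr}_Z:\Gamma\to Z$. Its fibers $A(z)$ are all homeomorphic, and the conditional measures of $\lambda$ have full support on each fiber. On the other hand, $\lambda$ is a graph measure, so its conditional measures over $Z$ are Dirac measures $\delta_{p^0(z)}$ for $m_Z$-a.e.\ $z$. Full support then forces $A(z)=\{p^0(z)\}$ for a.e.\ $z$. Since all fibers are homeomorphic, every fiber is a singleton.

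Define $p(z)$ as the unique point of $A(z)$. Continuity of $p$ follows from the continuity of $A$, or directly from the closed graph in a compact space. Equivariance is inherited from $\Gamma$. Surjectivity holds because $p(Z)$ is compact and contains the support of $p^0_*m_Z=m_Y$, which is all of $Y$. Finally, $p=p^0$ $m_Z$-a.e., because $\lambda$-a.e.\ point $(z,p^0(z))$ lies in $\Gamma$.

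The step we expect to be most delicate is justifying that $\supp\lambda$ is minimal. The argument above handles it through distality of isometric products and unique ergodicity of minimal equicontinuous systems; ergodicity of $\lambda$ is what rules out its mass being split across several minimal components.
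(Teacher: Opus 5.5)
Your strategy matches the paper's. You form the graph joining, show that $\Gamma=\supp\lambda$ is a single minimal subsystem of the equicontinuous product $Z\times Y$, apply Lemma~\ref{lem:homogeneous-factor-fibers}(b) to $\operatorname{pr}_Z:\Gamma\to Z$, and play full-support conditionals against the Dirac conditionals of a graph measure. The endgame (singleton fibers everywhere, continuity, surjectivity, $p=p^0$ a.e.) is fine. The appeal to Lemma~\ref{lem:automatic-fiber-continuity} is harmless but not needed, since a map with closed graph into a compact space is continuous.

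The step you flagged, however, is not justified by the reason you give. Unique ergodicity of a minimal component $\Gamma'$ only says that \emph{if} $\lambda(\Gamma')>0$, then $\lambda(\Gamma')=1$ (by ergodicity) and $\lambda$ is the Haar-type measure of $\Gamma'$. It gives no reason why $\lambda$ should charge any component at all. There are in general uncountably many minimal components; for $Z=Y$ an irrational rotation of $\mathbb T$, they are the translates of the diagonal. Invariant measures such as Lebesgue measure on $\mathbb T^2$ give each of them mass zero, so one must show that ergodic measures cannot do this.

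The missing ingredient is that the partition into minimal components is smooth. The paper obtains it from $K=\overline{G}\le\Isom(Z\times Y)$: the $K$-orbits are exactly the minimal components, the orbit space is compact Hausdorff with trivial $G$-action, and so the push-forward of the ergodic measure is a Dirac mass. Equivalently, in your terms, take an invariant product metric $d$. The map $w\mapsto\overline{Gw}\in\mathcal H(Z\times Y)$ is $G$-invariant and satisfies $d_{\mathrm H}(\overline{Gw},\overline{Gw'})\le d(w,w')$. By ergodicity it is $\lambda$-a.e.\ equal to one minimal set $C$, i.e.\ $\lambda(C)=1$. Then $\supp\lambda$ is a nonempty closed invariant subset of $C$, so $\Gamma=C$ by minimality alone; unique ergodicity plays no role. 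With this inserted, your proof is complete and coincides with the paper's.
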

\begin{proof}
Let
\[
 \rho:=(\id_Z,p^0)_*m_Z
 \qquad\text{and}\qquad
 \Gamma:=\supp(\rho)\subseteq Z\times Y.
\]
The measure $\rho$ is ergodic, and the product action is equicontinuous.  Let
$K$ be the closure of the diagonal image of $G$ in the isometry group of
$Z\times Y$.  Since convergence in this isometry group is uniform,
invariance under $G$ implies invariance of $\rho$ under $K$.  The $K$-orbits
are compact and are precisely the minimal components.  The compact Hausdorff
quotient by the $K$-orbits carries the trivial $G$-action, so the push-forward
of the ergodic measure $\rho$ to this quotient is a Dirac measure.  Hence
$\rho$ is concentrated on one $K$-orbit.  Since $\rho$ is $K$-invariant and
the invariant measure on that orbit has full support, $\Gamma$ is exactly
this orbit and is a minimal equicontinuous subsystem.

The first projection $q:\Gamma\to Z$ is a factor map and $q_*\rho=m_Z$.  By
Lemma \ref{lem:homogeneous-factor-fibers}, the conditional measures of $\rho$ over
$q$ have full support on the fibers of $q$.  On the other hand, since $\rho$
is a graph measure, these conditionals are Dirac measures for
$m_Z$-a.e. base point.  Therefore the fibers of $q$ are singletons
a.e.  Their cardinality is constant by
Lemma \ref{lem:homogeneous-factor-fibers}, so $q$ is one-to-one everywhere and
hence a homeomorphism.  The map
$
 p:=\operatorname{pr}_Y\circ q^{-1}
$
is the required continuous factor and agrees with $p^0$ almost everywhere.
\end{proof}

The next consequence will be used in both the degree and independence arguments below.
\begin{proposition}\label{prop1}
Let $\mu\in\M_G^e(X)$, and let
$
\kappa_\mu:(X,\mu,G)\to(Z_\mu,m_\mu,G)
$
be a homogeneous model of its measure-theoretic maximal compact factor.
Then there is a continuous factor map
$
p_\mu:(Z_\mu,G)\to(\Xeq,G)
$
such that
\[
\pi=p_\mu\circ\kappa_\mu
\qquad \mu\text{-a.e.}
\]
Moreover, all fibers of $p_\mu$ have the same cardinality.
\end{proposition}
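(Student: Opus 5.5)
The plan is to reduce the statement to Lemma~\ref{lem:measurable-factor-continuous} by first producing a measure-theoretic factor map $p^0_\mu:(Z_\mu,m_\mu,G)\to(\Xeq,\nu,G)$ with $\pi=p^0_\mu\circ\kappa_\mu$ $\mu$-a.e.

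First, observe that $\pi_*\mu=\nu$, since $\nu$ is the unique invariant measure on $\Xeq$ (Lemma~\ref{lem:homogeneous-factor-fibers}(a)). Next, show that the sub-$\sigma$-algebra $\pi^{-1}(\cB(\Xeq))$ is contained in $\cK_\mu$ modulo $\mu$. For every $f\in C(\Xeq)$ the function $f\circ\pi$ lies in $\mathcal H_{\mathrm c}(\mu)$. Indeed, take a $G$-invariant compatible metric on $\Xeq$; then the family $\{f\circ g^{-1}:g\in G\}$ is uniformly equicontinuous and uniformly bounded, so it is relatively compact in $C(\Xeq)$ by Arzel\`a--Ascoli. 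Composing with $\pi$ is a contraction from $C(\Xeq)$ into $L^2(\mu)$, so $\{U_g(f\circ\pi)\}$ is relatively compact in $L^2(\mu)$. Since such functions generate $\pi^{-1}(\cB(\Xeq))$ and $L^2(X,\cK_\mu,\mu)=\mathcal H_{\mathrm c}(\mu)$ is closed, we get $\pi^{-1}(\cB(\Xeq))\subseteq\cK_\mu=\kappa_\mu^{-1}(\cB(Z_\mu))$ mod $\mu$ (Theorem~\ref{thm:compact-homogeneous-model}).

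This inclusion lets us factor $\pi$ through $\kappa_\mu$. Because all spaces are standard Borel, the inclusion yields a measurable map $p^0_\mu:Z_\mu\to\Xeq$ with $\pi=p^0_\mu\circ\kappa_\mu$ $\mu$-a.e. Concretely, realize $\pi$ as $\mathbb E(\cdot\mid\cK_\mu)$-measurable via a countable generating family, or apply the standard Doob-type factorization lemma for maps into standard Borel spaces. It then remains to check two things. Equivariance $p^0_\mu(gz)=gp^0_\mu(z)$ for $m_\mu$-a.e.\ $z$ follows from equivariance of $\pi$ and $\kappa_\mu$ together with $(\kappa_\mu)_*\mu=m_\mu$. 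The identity $(p^0_\mu)_*m_\mu=\pi_*\mu=\nu$ is immediate. This measurable factorization is the only step requiring some care, chiefly the null-set bookkeeping over the countable group $G$; it is routine.

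Now both $(Z_\mu,G)$ (a translation action on $K_\mu/H_\mu$ with $\alpha_\mu$ dense) and $(\Xeq,G)$ are minimal equicontinuous. Lemma~\ref{lem:measurable-factor-continuous} therefore gives a continuous factor map $p_\mu$ agreeing with $p^0_\mu$ $m_\mu$-a.e. Hence $\pi=p_\mu\circ\kappa_\mu$ $\mu$-a.e., because $\kappa_\mu$ pushes $\mu$ to $m_\mu$, so the exceptional set pulls back to a $\mu$-null set. Finally, constancy of the fiber cardinality is exactly Lemma~\ref{lem:homogeneous-factor-fibers}(b) applied to $p_\mu$.
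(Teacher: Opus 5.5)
Your proposal is correct and follows the same route as the paper. Both obtain a measure-theoretic factor map $p^0_\mu$ from maximality of the compact factor, upgrade it to a continuous factor map via Lemma~\ref{lem:measurable-factor-continuous}, and read off constant fiber cardinality from Lemma~\ref{lem:homogeneous-factor-fibers}(b). The paper simply asserts the maximality step, whereas you spell it out: you show $\pi^{-1}(\cB(\Xeq))\subseteq\cK_\mu$ via Arzel\`a--Ascoli and then apply a Doob-type factorization, which faithfully expands what the paper takes for granted.
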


\begin{proof}
The topological factor $\pi$ is a compact measure-theoretic factor of
$(X,\mu,G)$, so maximality of $\kappa_\mu$ gives a
measure-theoretic factor map
$p_\mu^0:(Z_\mu,m_\mu,G)\to(\Xeq,\nu,G)$ with
$\pi=p_\mu^0\circ\kappa_\mu$, $\mu$-a.e.
Lemma~\ref{lem:measurable-factor-continuous} replaces $p_\mu^0$ by an
almost everywhere equal continuous factor map $p_\mu$.  The final
assertion follows from
Lemma~\ref{lem:homogeneous-factor-fibers}.
\end{proof}

\begin{corollary}
\label{cor:automatic-phase-persistence}
Let $\mu\in\M_G^e(X)$, let
$
\kappa_\mu:(X,\mu,G)\to(Z_\mu,m_\mu,G)
$
be a homogeneous model of its maximal compact factor, and put
\[
\Gamma_\mu
:=
\supp\bigl((\id_X,\kappa_\mu)_*\mu\bigr),
\qquad
A_\mu(x):=\{z\in Z_\mu:(x,z)\in\Gamma_\mu\}.
\]
Then $A_\mu:X\to\mc H(Z_\mu)$ is  continuous and equivariant.
For the continuous factor $p_\mu:Z_\mu\to\Xeq$ constructed in Proposition \ref{prop1},
\begin{equation}\label{eq:exact-compatible-phases-general}
A_\mu(x)=p_\mu^{-1}(\pi(x))
\qquad(x\in X).
\end{equation}
\end{corollary}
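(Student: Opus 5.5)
First, continuity and equivariance of $A_\mu$ will come from Lemma~\ref{lem:automatic-fiber-continuity}. The set $\Gamma_\mu$ is closed, being a support. It is $G$-invariant, because $(\id_X,\kappa_\mu)_*\mu$ is invariant under the diagonal action: $\kappa_\mu$ is equivariant $\mu$-a.e. and $\mu$ is invariant, and the support of an invariant measure under a continuous action is invariant. Its first projection is a closed invariant set containing $\supp\mu$, and $\supp\mu\neq\emptyset$, so minimality of $(X,G)$ forces it to be $X$. Finally, $Z_\mu=K_\mu/H_\mu$ carries a $G$-invariant metric, since the action is by left translations through $\alpha_\mu$ on a compact homogeneous space; equivalently, it is minimal equicontinuous. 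The lemma then applies directly.

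For \eqref{eq:exact-compatible-phases-general}, I will first show the inclusion $A_\mu(x)\subseteq p_\mu^{-1}(\pi(x))$. Consider the closed set
\[
C:=\{(x,z)\in X\times Z_\mu: p_\mu(z)=\pi(x)\},
\]
which is closed because $p_\mu$ and $\pi$ are continuous. By Proposition~\ref{prop1}, $\pi(x)=p_\mu(\kappa_\mu(x))$ for $\mu$-a.e.\ $x$, so $(\id_X,\kappa_\mu)_*\mu$ gives full mass to $C$. Since $C$ is closed, it contains the support, and hence $\Gamma_\mu\subseteq C$.

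For the reverse inclusion, I will compare fibers through a single base point. Both $x\mapsto A_\mu(x)$ and $x\mapsto p_\mu^{-1}(\pi(x))$ are continuous and equivariant; the second factors through $\Xeq$. Fix $x$ and put $y=\pi(x)$. The fiber $F_y:=p_\mu^{-1}(y)$ is acted on transitively by the stabilizer $L_y$ in the closure group $L$ from the proof of Lemma~\ref{lem:homogeneous-factor-fibers}(b).

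The key input is measure-theoretic. Disintegrate $\rho:=(\id_X,\kappa_\mu)_*\mu$ over $\Xeq$ via $(x,z)\mapsto\pi(x)$. The projection of $\rho$ to $Z_\mu$ is $m_\mu$. By Lemma~\ref{lem:homogeneous-factor-fibers}(b), the conditional measures of $m_\mu$ over $p_\mu$ have full support on each fiber $F_y$. Hence for $\nu$-a.e.\ $y$, every open set $W\subseteq Z_\mu$ meeting $F_y$ has positive conditional $m_\mu$-mass, and so some $x'\in\pi^{-1}(y)\cap\supp\mu$ has $\kappa_\mu(x')$ in $W$ in a measure-positive way. This yields that
\[
\bigcup_{x'\in\pi^{-1}(y)}A_\mu(x')\supseteq F_y
\]
for a dense set of $y$, and by closedness of $\Gamma_\mu$ and compactness for all $y$.

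That union statement is weaker than what is needed, since it does not give $F_y\subseteq A_\mu(x)$ for each individual $x$ in the fiber. To upgrade it I will use equivariance together with continuity:
\[
A_\mu(gx)=\alpha_\mu(g)A_\mu(x).
\]
The function $x\mapsto |F_{\pi(x)}|$ is constant, or more robustly the $m_\mu$-conditional mass of $A_\mu(x)$ is constant. Concretely, $\phi(x):=\lambda_{\pi(x)}(A_\mu(x))$ is upper semicontinuous (as $A_\mu$ is continuous, hence closed-valued and u.s.c.) and $G$-invariant, since $\lambda$ is equivariant. An invariant upper semicontinuous function on a minimal system is constant: its superlevel sets are closed and invariant. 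On the other hand,
\[
\int\phi\,d\mu\ge\int\lambda_{\pi(x)}\bigl(\{\kappa_\mu\text{-image conditional support}\}\bigr)\,d\mu=1,
\]
because for $\mu$-a.e.\ $x$ the conditional of $\rho$ over $\Xeq$ lives on $\{(x',z):z\in A_\mu(x')\}$. I need $\phi\equiv1$, i.e.\ $A_\mu(x)$ has full $\lambda_{\pi(x)}$-measure, and full support of $\lambda_{\pi(x)}$ then gives $A_\mu(x)\supseteq F_{\pi(x)}$.

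The main obstacle is exactly this step of showing a typical single $x$ (not a union over the fiber) sees the whole compact fiber, i.e.\ $\phi=1$ $\mu$-a.e. I would try to derive it from the structure of $\mu$ relative to $\kappa_\mu$: the graph $\Gamma_\mu$ is the support of the lifted measure, so $A_\mu(x)$ for $x\in\supp\mu$ contains every $z$ such that $(x,z)$ is a density point of $\rho$. I would argue by invariance under the compact group $L_y$ acting on phases. Right-translating phases by elements of $K_\mu$ commuting with the $G$-action, combined with ergodicity, shows that the set of phases attached near $x$ is a union of cosets of the fiber group. If this fails, I would instead use the Haar averaging in the style of the paper's mixed-phase joinings to force full fibers.
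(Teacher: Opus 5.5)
Your proof that $A_\mu$ is continuous and equivariant, and your proof of the inclusion $A_\mu(x)\subseteq p_\mu^{-1}(\pi(x))$, are correct and match the paper. The reverse inclusion, however, is not proved. As you note yourself, you only reach the statement that the union $\bigcup_{x'\in\pi^{-1}(y)}A_\mu(x')$ covers $p_\mu^{-1}(y)$. Knowing that $\phi(x)=\lambda_{\pi(x)}(A_\mu(x))$ is constant does not close the gap, because the asserted inequality $\int\phi\,d\mu\ge1$ is exactly the claim still to be proved.

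More fundamentally, none of the tools you invoke uses that $\pi$ is the \emph{maximal} equicontinuous factor. These tools are the a.e.\ identity $\pi=p_\mu\circ\kappa_\mu$, full support of the fiber conditionals, semicontinuity, equivariance, ergodicity, right translations in $K_\mu$, and Haar averaging. Without maximality the conclusion is false. Take $X=Z_\mu=\mathbb T$ with rotation by an irrational $\alpha$, $\kappa_\mu=\id$, and the non-maximal equicontinuous factor $x\mapsto 2x$ onto rotation by $2\alpha$. Then $A_\mu(x)=\{x\}$, while the corresponding fiber is $\{x,x+\tfrac12\}$. Your $\phi$ is the constant $\tfrac12$ there, and the phases attached to $x$ are not a union of cosets of the fiber group.

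The missing idea is to apply the universal property of $\Xeq$ to $A_\mu$ itself, not only to $x\mapsto p_\mu^{-1}(\pi(x))$. Since $Z_\mu$ carries a $G$-invariant metric, the induced action on $(\mc H(Z_\mu),d_{\mathrm H})$ is isometric. Hence $A_\mu(X)$ is a minimal equicontinuous factor of $X$, and $A_\mu=Q_\mu\circ\pi$ for a continuous $Q_\mu:\Xeq\to\mc H(Z_\mu)$ with $Q_\mu(y)\subseteq p_\mu^{-1}(y)$. Now $A_\mu(x)$ depends only on $\pi(x)$, so your \emph{union over the fiber} obstacle disappears.

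The rest then goes through as in the paper:
\begin{itemize}
\item For $\mu$-a.e.\ $x$ one has $\kappa_\mu(x)\in Q_\mu(p_\mu(\kappa_\mu(x)))$, so pushing forward and disintegrating over $p_\mu$ gives $(m_\mu)_y(Q_\mu(y))=1$ for $\nu$-a.e.\ $y$.
\item Full support of $(m_\mu)_y$ on the fiber then gives $Q_\mu(y)=p_\mu^{-1}(y)$ almost everywhere.
\item Both $Q_\mu$ and $y\mapsto p_\mu^{-1}(y)$ are continuous; the latter follows from Lemma~\ref{lem:automatic-fiber-continuity} applied to the graph relation of $p_\mu$. Two continuous maps agreeing on a $\nu$-conull, hence dense, set agree everywhere.
\end{itemize}
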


\begin{proof}
The first marginal of the graph joining
$(\id_X,\kappa_\mu)_*\mu$ is $\mu$, which has full support by
minimality.  Hence the first projection of $\Gamma_\mu$ is all of $X$,
and Lemma~\ref{lem:automatic-fiber-continuity} applies.

Since
$
\pi=p_\mu\circ\kappa_\mu
$ $\mu\text{-a.e.},
$
the support of the graph joining is contained in
\[
\{(x,z)\in X\times Z_\mu:\pi(x)=p_\mu(z)\}.
\]
Therefore
\begin{equation}\label{eq:automatic-A-in-fiber}
A_\mu(x)\subseteq p_\mu^{-1}(\pi(x))
\qquad(x\in X).
\end{equation}
The image $A_\mu(X)$ is a minimal equicontinuous factor of $X$ under
the induced isometric action on $\mc H(Z_\mu)$.  By the universal
property of $\Xeq$, there is a continuous map
\[
Q_\mu:\Xeq\to\mc H(Z_\mu)
\]
such that $A_\mu=Q_\mu\circ\pi$.  In view of
\eqref{eq:automatic-A-in-fiber},
\begin{equation}\label{eq:20267261658}
Q_\mu(y)\subseteq p_\mu^{-1}(y)
\qquad(y\in\Xeq).    
\end{equation}

For $\mu$-a.e. $x\in X$,
\[
\kappa_\mu(x)\in A_\mu(x)
=Q_\mu(\pi(x))
=Q_\mu(p_\mu(\kappa_\mu(x))).
\]
Pushing forward by $\kappa_\mu$ and disintegrating
$
m_\mu=\int_{\Xeq}(m_\mu)_y\,d\nu(y)
$
over $p_\mu$, we obtain
\[
(m_\mu)_y(Q_\mu(y))=1
\qquad\text{for $\nu$-a.e. }y\in\Xeq.
\]
The conditional measure $(m_\mu)_y$ has full support on
$p_\mu^{-1}(y)$ by Lemma~\ref{lem:homogeneous-factor-fibers}.  Since
$Q_\mu(y)$ is closed and contained in that fiber, it follows from \eqref{eq:20267261658} that
\[
Q_\mu(y)=p_\mu^{-1}(y)
\qquad\text{for $\nu$-a.e. }y\in\Xeq.
\]

Finally, apply Lemma~\ref{lem:automatic-fiber-continuity} to the closed
invariant relation
\[
\{(y,z)\in\Xeq\times Z_\mu:p_\mu(z)=y\}.
\]
The map $y\mapsto p_\mu^{-1}(y)$ is  continuous.  Both it and
$Q_\mu$ are therefore continuous and agree on a $\nu$-conull, hence
dense, subset of $\Xeq$.  They agree everywhere, proving
\eqref{eq:exact-compatible-phases-general}.
\end{proof}
\begin{remark}\label{rem:model-independence-multiplicities}
Corollary~\ref{cor:automatic-phase-persistence} shows that
\[
 \iota_\mu
 :=|p_\mu^{-1}(y)|
 =|A_\mu(x)|
 \in\N\cup\{\infty\}
\]
is independent of $x\in X$ and $y\in\Xeq$.  It is also independent of
the chosen compact homogeneous model: any two models represent the same
maximal compact factor and are measurably conjugate over $\Xeq$, while
Lemma~\ref{lem:measurable-factor-continuous} upgrades the induced map
between their compact models to a continuous factor isomorphism.  The
residual multiplicity $b_\mu$ is model-independent as well, since
$b_\mu=\exp(h_\mu^*(G))$ by
Proposition~\ref{prop:residual-sequence-entropy}.
\end{remark}
\subsection{Topological entropy for amenable group actions}
For a finite open
cover $\mathcal U$ of $X$,
we write $N(\mathcal U)$ for the least cardinality of a subcover of
$\mathcal U$.  If $\Fcal=(F_n)_{n\ge1}$ is a left F{\o}lner sequence,
define
\[
h_{\mathrm{top}}(X,G,\mathcal U)
:=
\lim_{n\to\infty}
\frac{1}{|F_n|}
\log N\left(\bigvee_{g\in F_n}g^{-1}\mathcal U\right),
\]
and
\[
h_{\mathrm{top}}(X,G)
:=
\sup_{\mathcal U}
h_{\mathrm{top}}(X,G,\mathcal U),
\]
where the supremum is taken over all finite open covers of $X$.
The above limit exists and is independent of the choice of the
F{\o}lner sequence; see
\cite[Lemma~2.4 and Section~3.1]{HuangYeZhang2011}.
\subsection{Sequence entropy and independence}
Fix $\mu\in\M_G(X)$.  For a finite measurable partition
$\alpha$ of $X$, define its \emph{Shannon entropy} by
\[
H_\mu(\alpha)
:=
-\sum_{A\in\alpha}\mu(A)\log\mu(A).
\]
 If $\alpha$ and $\beta$ are
finite measurable partitions, their join is
$
 \alpha\vee\beta
 :=
 \{A\cap B:A\in\alpha,\ B\in\beta\},
$
after sets of $\mu$-measure zero are discarded.

Let
$
\mathsf S=(g_n)_{n\ge1}
$
be a sequence of pairwise distinct elements of $G$, and let $\alpha$
be a finite measurable partition of $X$.  Define
\[
h_\mu^{\mathsf S}(G,\alpha)
:=
\limsup_{n\to\infty}
\frac1n
H_\mu
\left(
\bigvee_{j=1}^n g_j^{-1}\alpha
\right),
\]
and
\[
h_\mu^*(G,\alpha)
:=
\sup_{\mathsf S}
h_\mu^{\mathsf S}(G,\alpha),
\qquad
h_\mu^*(G)
:=
\sup_{\alpha}
h_\mu^*(G,\alpha).
\]
The following result is a direct specialization of
\cite[Theorem~2.9]{LiuWangXu2025}. For $\mathbb Z$-actions, the
underlying sequence-entropy theory goes back to Kushnirenko
\cite{Kushnirenko1967} and Hulse \cite{Hulse1982}; the partition-level
conditional-entropy formula was established by Huang--Maass--Ye
\cite[Lemma~2.2 and Theorem~2.3]{HuangMaassYe2004}.
An amenable-group counterpart was obtained in
\cite{LiuYan2023}.

\begin{proposition}
\label{prop:residual-sequence-entropy}
Let
$
\mu\in\M_G^e(X),
$
and let
$
\kappa_\mu:
(X,\mu,G)
\to
(Z_\mu,m_\mu,G)
$
be its maximal compact factor.  Write
$
\mu
=
\int_{Z_\mu}\mu_{\mu,z}\,dm_\mu(z)
$
for the disintegration of $\mu$ over $\kappa_\mu$.  Then there exists
$
b_\mu\in\N\cup\{\infty\}
$
such that
\[
|\supp(\mu_{\mu,z})|=b_\mu
\qquad
\text{for }m_\mu\text{-a.e. }z\in Z_\mu,
\]
and
\[
h_\mu^*(G)=\log b_\mu.
\]
\end{proposition}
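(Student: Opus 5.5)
The proposition is stated as a direct specialization of \cite[Theorem~2.9]{LiuWangXu2025}. The plan is therefore to verify that its hypotheses hold in the present setting and to translate the conclusion. I would still sketch the underlying mechanism, which has two parts. First, the disintegration over the maximal compact factor has almost surely constant fiber cardinality. Second, that cardinality determines the maximal sequence entropy, via the Huang--Maass--Ye conditional entropy formula.

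\emph{Step 1: constancy of $|\supp(\mu_{\mu,z})|$.}
\begin{itemize}
\item Choose a version of the disintegration over the homogeneous model of Theorem~\ref{thm:compact-homogeneous-model}. The map $z\mapsto|\supp(\mu_{\mu,z})|$ is then measurable with values in $\N\cup\{\infty\}$.
\item Equivariance of the disintegration gives $\mu_{\mu,gz}=g_*\mu_{\mu,z}$ for $m_\mu$-a.e.\ $z$, for each $g$ in the countable group $G$.
\item So this function is $G$-invariant modulo null sets. Ergodicity of $m_\mu$, which is a factor of the ergodic $\mu$, makes it a.e.\ constant; call the constant $b_\mu$.
\end{itemize}

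\emph{Step 2: the entropy formula.} Fix a sequence $\mathsf S$ of distinct elements of $G$ and a finite partition $\alpha$.
\begin{itemize}
\item I would prove that $h_\mu^{\mathsf S}(G,\alpha)\le H_\mu(\alpha\mid\cK_\mu)$, and that equality is attained along a suitable subsequence.
\item This is the amenable analogue of the Kushnirenko--Hulse--Huang--Maass--Ye argument.
\item For functions orthogonal to $\mathcal H_{\mathrm c}(\mu)$, one can extract a sequence along which $U_{g}f$ are asymptotically independent relative to $\cK_\mu$. This uses the weak mixing of the relative extension over the compact part.
\item Compact functions are asymptotically determined by finitely many of their translates, so they contribute zero sequence entropy.
\item Taking the supremum over $\alpha$ gives $h_\mu^*(G)=\sup_\alpha H_\mu(\alpha\mid\cK_\mu)$.
\end{itemize}

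\emph{Step 3: evaluating the supremum.}
\begin{itemize}
\item If the fibers are a.e.\ finite of cardinality $b_\mu$, then for any partition $\alpha$ the conditional distribution of $\alpha$ given $\cK_\mu$ has at most $b_\mu$ atoms in each fiber.
\item The first key point is that these atoms are equidistributed. The relative extension is then a finite isometric-type extension whose fiber measures are uniform: the relatively weakly mixing structure forces an equal-weight conditional measure. So $H_\mu(\alpha\mid\cK_\mu)\le\log b_\mu$, with equality for partitions separating the fiber points on a large set.
\item If $b_\mu=\infty$, the supremum is $\infty$.
\end{itemize}

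The main obstacle is the uniformity of the conditional atoms together with the relative independence extraction in Step 2 for a general amenable $G$ without a total order. This is precisely what \cite[Theorem~2.9]{LiuWangXu2025} supplies, so in the writeup I would invoke it directly.
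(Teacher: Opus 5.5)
Your final plan matches the paper. The paper gives no independent proof: it records the proposition as a direct specialization of \cite[Theorem~2.9]{LiuWangXu2025}, which is exactly what you propose to invoke. Your Step~1 (invariance of $z\mapsto|\supp(\mu_{\mu,z})|$ plus ergodicity of $m_\mu$) is also fine. However, two justifications in your sketch of the mechanism are false and should not go into a writeup.

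\emph{Step~2.} The extension $(X,\mu)\to(Z_\mu,m_\mu)$ is in general \emph{not} relatively weakly mixing, and asymptotic independence relative to $\cK_\mu$ can fail at every time. Take the Anzai skew product $T(x,y)=(x+\alpha,y+x)$ on $\Torus^2$ with Lebesgue measure. Its maximal compact factor is the first coordinate, since the orthocomplement of $L^2(x)$ has Lebesgue spectrum, and the extension is isometric. For $f(x,y)=e^{2\pi i y}$ one has $\mathbb E(f\mid\cK_\mu)=0$. Yet $f\circ T^n\cdot\bar f=e^{2\pi i(nx+\binom n2\alpha)}$ is $\cK_\mu$-measurable of modulus one for every $n$. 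Relative independence would force it to be close to $\mathbb E(f\circ T^n\mid\cK_\mu)\,\mathbb E(\bar f\mid\cK_\mu)=0$.

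The lower bound along a suitable sequence comes instead from the Huang--Maass--Ye argument \cite{HuangMaassYe2004}. Let $\mathcal D$ be the finite join of the translates already chosen. Then $1_A-\mathbb E(1_A\mid\cK_\mu)\in\mathcal H_{\mathrm c}(\mu)^\perp$, so for all $g$ outside a set of zero density its translate has uniformly small conditional expectation onto the finite-dimensional space $L^2(\mathcal D)$. Hence $\mathbb E(1_{g^{-1}A}\mid\mathcal D)$ is uniformly close to the conditional expectation onto $\mathcal D$ of the translate of $\mathbb E(1_A\mid\cK_\mu)$. Conditional Jensen for the concave function $u\mapsto-u\log u$, together with invariance of $\mu$, then gives $H_\mu(g^{-1}\alpha\mid\mathcal D)\ge H_\mu(\alpha\mid\cK_\mu)-\varepsilon$. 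Only this unconditional estimate is needed; no independence relative to $\cK_\mu$ is used.

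\emph{Step~3.} The equal weights of the conditional atoms cannot come from relative weak mixing. A nontrivial finite extension is never relatively weakly mixing, because the diagonal of the relative product is an invariant set of measure strictly between $0$ and $1$. So ``finite isometric-type extension'' and ``relatively weakly mixing structure'' contradict each other. The correct reason is ergodicity alone: $x\mapsto\mu_{\mu,\kappa_\mu(x)}(\{x\})$ is a $G$-invariant function, hence a.e.\ equal to a constant $w$. If $w>0$, the conditional measures are uniform on $1/w$ atoms. If $w=0$, they are a.e.\ non-atomic, and this is also what yields $\sup_\alpha H_\mu(\alpha\mid\cK_\mu)=\infty$ when $b_\mu=\infty$.
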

We also record the topological counterpart. Let
$\mathsf S=(g_n)_{n\ge1}$ be a sequence of pairwise distinct elements of
$G$, and let $\mathcal U$ be a finite open cover of $X$. Set
\[
h_{\mathrm{top}}^{\mathsf S}(G,\mathcal U)
:=
\limsup_{n\to\infty}\frac1n
\log N\left(\bigvee_{j=1}^n g_j^{-1}\mathcal U\right).
\]
Define
\[
h_{\mathrm{top}}^{\mathsf S}(X,G)
:=
\sup_{\mathcal U}
h_{\mathrm{top}}^{\mathsf S}(G,\mathcal U),
\qquad
h_{\mathrm{top}}^*(X,G)
:=
\sup_{\mathsf S}
h_{\mathrm{top}}^{\mathsf S}(X,G).
\]
The quantity $h_{\mathrm{top}}^*(X,G)$ is the maximal topological
sequence entropy, also called maximal pattern entropy; see
\cite{Goodman1974,HuangYe2009,LiuYan2023}.

We next recall the independence notions \cite{KerrLi2007}.  Let $N\ge2$ and let
$A_1,\ldots,A_N\subseteq X$.  A set $J\subseteq G$ is called an
\emph{independence set} for $(A_1,\ldots,A_N)$ if
$
\bigcap_{g\in F}g^{-1}A_{\omega(g)}\ne\emptyset
$
for every nonempty finite set $F\subseteq J$ and every map
$
\omega:F\to[N].
$

\begin{definition}
\label{def:IN-IT-tuples}
Let $N\ge2$ and let
$
\mathbf x=(x_1,\ldots,x_N)\in X^N.
$
We call $\mathbf x$ 
\begin{enumerate}
    \item an \emph{IN $N$-tuple} if, for every choice of
neighborhoods $U_i\ni x_i$, $i\in[N]$, there exist independence sets
for $(U_1,\ldots,U_N)$ of arbitrarily large finite cardinality;
\item an \emph{IT $N$-tuple} if, for every choice of
neighborhoods $U_i\ni x_i$, $i\in[N]$, there exists an infinite
independence set for $(U_1,\ldots,U_N)$.
\end{enumerate}
We denote the sets of IN and IT $N$-tuples by
$
\IN_N(X,G)
$ and  $
\IT_N(X,G),$
respectively.
\end{definition}

The notion of an IN-tuple is combinatorial, whereas a sequence
entropy tuple is defined in terms of positive topological sequence
entropy along a suitable sequence of group elements.  These notions
are nevertheless equivalent off the fat diagonal: for every
$N\ge2$, the essential IN $N$-tuples are precisely the sequence
entropy $N$-tuples; see \cite{KerrLi2007,HuangYe2009}.  Thus IN-tuples
provide the independence-theoretic description of local sequence
entropy.  By contrast, the IT condition is stronger, since it requires
a single infinite independence set rather than arbitrarily large
finite ones.  In particular,
\[
\IT_N(X,G)\subseteq\IN_N(X,G).
\]
We shall use the following tuple characterization of maximal
topological sequence entropy:
\begin{equation}
\label{eq:maximal-pattern-IN-rank}
h_{\mathrm{top}}^*(X,G)
=
\log\sup\Bigl(
\{1\}\cup
\bigl\{
N\ge2:
\IN_N(X,G)\setminus\Delta_{\mathrm{fat}}^N(X)
\ne\emptyset
\bigr\}
\Bigr),
\end{equation}
which is obtained by combining \cite[Theorem~5.9]{KerrLi2007} and \cite[Theorems~A.1 and A.3]{HuangYe2009}.

\subsection{Multivariate mean equicontinuity}

Let $\mc F=(F_n)_{n\ge1 }$ be a F{\o}lner sequence.
For $r\ge2$ and $\mathbf x=(x_1,\ldots,x_r)\in X^r$, put
\[
 D_r(\mathbf x):=\min_{1\le i<j\le r}d(x_i,x_j)
\]
and
\begin{equation}\label{eq:Folner-mean-distance}
 D_{r,\Fcal}(\mathbf x)
 :=
 \limsup_{n\to\infty}
 \frac1{|F_n|}
 \sum_{g\in F_n}D_r(gx_1,\ldots,gx_r).
\end{equation}
The Weyl mean $r$-distance is
\begin{equation}\label{eq:Weyl-distance}
 D_r^W(\mathbf x):=\sup_{\Fcal}D_{r,\Fcal}(\mathbf x),
\end{equation}
where the supremum is over all  F{\o}lner sequences of $G$.

\begin{definition}\label{def:mean-equicontinuity}
Let $r\ge2$.
\begin{enumerate}[label=\textup{(\alph*)},leftmargin=2.2em]
\item The system is \emph{$\Fcal$-mean $r$-equicontinuous} if, for every
$\varepsilon>0$, there is $\delta>0$ such that
\[
 \max_{i<j}d(x_i,x_j)<\delta
 \quad\Longrightarrow\quad
 D_{r,\Fcal}(x_1,\ldots,x_r)<\varepsilon.
\]
\item It is \emph{Weyl mean $r$-equicontinuous} if the same statement holds with
$D_r^W$ in place of $D_{r,\Fcal}$.
\item A point $x\in X$ is an $\Fcal$-mean $r$-sensitive point if there is
$\varepsilon>0$ such that every neighborhood $U$ of $x$ contains
$x_1,\ldots,x_r$ satisfying
$
 D_{r,\Fcal}(x_1,\ldots,x_r)\ge\varepsilon.
$
\end{enumerate}
\end{definition}
\begin{remark}\label{lem:metric-independence}
    The notions of $\Fcal$-mean and Weyl mean $r$-equicontinuity do not depend
on the chosen compatible metric on the compact space $X$.
\end{remark}
The following lemma is immediate from the definition.
\begin{lemma}\label{lem:no-sensitive-point}
If $(X,G)$ is $\Fcal$-mean $r$-equicontinuous, then it has no
$\Fcal$-mean $r$-sensitive point.
\end{lemma}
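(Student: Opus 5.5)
This is immediate from the definitions, so the proof is a direct contrapositive argument. Assume $(X,G)$ is $\Fcal$-mean $r$-equicontinuous, and suppose, towards a contradiction, that some $x\in X$ is an $\Fcal$-mean $r$-sensitive point. Let $\varepsilon>0$ be a sensitivity constant for $x$, as in Definition~\ref{def:mean-equicontinuity}(c).

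First, apply $\Fcal$-mean $r$-equicontinuity with this $\varepsilon$. It gives $\delta>0$ such that every $r$-tuple with $\max_{i<j}d(x_i,x_j)<\delta$ satisfies $D_{r,\Fcal}(x_1,\ldots,x_r)<\varepsilon$.

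Next, take $U$ to be the open ball of radius $\delta/2$ about $x$. Any $x_1,\ldots,x_r\in U$ satisfy $d(x_i,x_j)<\delta$ for all $i<j$ by the triangle inequality. Hence every $r$-tuple drawn from $U$ has $D_{r,\Fcal}(x_1,\ldots,x_r)<\varepsilon$.

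Finally, sensitivity at $x$ supplies $x_1,\ldots,x_r\in U$ with $D_{r,\Fcal}(x_1,\ldots,x_r)\ge\varepsilon$. This contradicts the previous step, so $(X,G)$ has no $\Fcal$-mean $r$-sensitive point.

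There is no genuine obstacle. The only point to check is that the quantifiers line up: sensitivity allows the tuple to be chosen in every neighborhood, so it certainly can be chosen in the $\delta/2$-ball. No strictness issue arises, because equicontinuity gives the strict bound $<\varepsilon$ while sensitivity gives $\ge\varepsilon$.
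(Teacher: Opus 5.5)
Your proof is correct and is exactly the direct argument from the definitions. The paper gives no written proof and simply calls the lemma immediate from the definitions, which is what you have spelled out: take $\varepsilon$ from sensitivity, obtain $\delta$ from equicontinuity, and test on the ball of radius $\delta/2$ about $x$.
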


\subsection{Conditional topomorphic degree}

Assume that $(X,G)$ is minimal, and let
$
 \pi:(X,G)\to(X_{\mr{eq}},G)
$
be its maximal equicontinuous factor.  A minimal equicontinuous system is
uniquely ergodic; its invariant probability measure will be denoted by
$\nu$. 
For $\mu\in\M_G(X)$, write
$
 \mu=\int_{\Xeq}\mu_y\,d\nu(y)
$
for its disintegration over $\pi$.

\begin{definition}
\label{def:topomorphic-degree}
The \emph{conditional topomorphic degree} of $(X,G)$ over its maximal
equicontinuous factor is
\begin{equation}\label{eq:topomorphic-degree}
 \tdeg_G(X)
 :=
 \sup_{\mu\in\M_G(X)}
 \esssup_{y\in \Xeq}|\supp(\mu_y)|
 \in\N\cup\{\infty\}.
\end{equation}
\end{definition}
\begin{remark}
    The degree $\tdeg_G(X)$ is independent of
the chosen version of the disintegration, since two versions agree for
$\nu$-a.e. base point.  It is also invariant under topological conjugacy.
\end{remark}

\begin{proposition}
\label{prop:finite-conditional-degree}
Assume that $d:=\tdeg_G(X)<\infty$.  Then there are finitely many ergodic
measures $\mu_1,\ldots,\mu_\ell$ and integers $k_i\ge1$ such that
\begin{equation}\label{eq:sum-conditional-degrees}
 \ell\le d,
 \qquad
 \sum_{i=1}^{\ell}k_i=d,
\end{equation}
and the conditional measures of $\mu_i$ over $\pi$ have support cardinality
$k_i$ for $\nu$-a.e. $y$.  Moreover, there are Borel maps
$
 \gamma_{i,j}:\Xeq\to X,$ 
 $
 1\le i\le\ell,
 $ $ 1\le j\le k_i,
$
such that
\[
 \pi(\gamma_{i,j}(y))=y\qquad\text{for every }y\in\Xeq,
\]
 and
\[
 \supp((\mu_i)_y)
 =
 \{\gamma_{i,1}(y),\ldots,\gamma_{i,k_i}(y)\}\qquad\text{for }\nu\text{-a.e. } y\in \Xeq.
\]
\end{proposition}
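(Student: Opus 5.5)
The plan is to combine three things: ergodic decomposition, the fact that each ergodic measure has a.e. constant conditional support cardinality, and a separation argument showing that distinct ergodic measures have disjoint conditional supports over $\nu$-a.e.\ fiber. Once these are in place, summing ergodic measures is forced to add support cardinalities. Together with the finiteness of $d$, this gives the bound on the number of ergodic measures and on their total multiplicity.

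\textbf{Step 1 (constant cardinality).} For an ergodic $\mu$, the function $y\mapsto|\supp(\mu_y)|$ is Borel and, by equivariance of the disintegration ($\mu_{gy}=g_*\mu_y$ for $\nu$-a.e.\ $y$), it is $G$-invariant. Since $\nu$ is ergodic, it is $\nu$-a.e.\ equal to a constant $k_\mu\in\N\cup\{\infty\}$. By the definition of $d$, we get $k_\mu\le d$.

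\textbf{Step 2 (disjointness).} Let $\mu_1,\ldots,\mu_\ell$ be distinct ergodic measures, and put $\mu:=\frac1\ell\sum_i\mu_i$, so that $\mu_y=\frac1\ell\sum_i(\mu_i)_y$. We claim that for $\nu$-a.e.\ $y$ the finite sets $S_i(y):=\supp((\mu_i)_y)$ are pairwise disjoint. Distinct ergodic measures are mutually singular, so choose disjoint invariant Borel sets $B_i$ with $\mu_i(B_i)=1$. Then $(\mu_i)_y(B_i)=1$ for a.e.\ $y$. Because $(\mu_i)_y$ is a finitely supported measure with support $S_i(y)$, every point of $S_i(y)$ carries an atom of positive mass, so $S_i(y)\subseteq B_i$. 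Disjointness of the $B_i$ then gives disjointness of the $S_i(y)$. This is the step I expect to need the most care: one must verify that the atoms argument requires no continuity, and it does not, because a finitely supported measure gives positive mass to each point of its support. Consequently $|\supp(\mu_y)|=\sum_i k_{\mu_i}$ a.e., and this sum is at most $d$. In particular each $k_{\mu_i}\ge1$ and $\ell\le d$, so there are at most $d$ ergodic measures.

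\textbf{Step 3 (attaining $d$).} Every invariant $\mu$ is a finite convex combination of the finitely many ergodic measures. The same disjointness argument shows that $\supp(\mu_y)$ is the union of the $S_i(y)$ over those $i$ with positive weight, and hence has cardinality at most $\sum_{i=1}^\ell k_i$. Combined with Step 2, $d=\sum_i k_i$, where $\mu_1,\ldots,\mu_\ell$ is the full list of ergodic measures.

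\textbf{Step 4 (Borel selection).} The map $y\mapsto(\mu_i)_y\in\Prob(X)$ is Borel, and for $\nu$-a.e.\ $y$ it is supported on $k_i$ points of $\pi^{-1}(y)$. Fix a Borel linear order on $X$ (via a Borel embedding into $[0,1]$), and let $\gamma_{i,j}(y)$ be the $j$-th smallest support point. Measurability holds because the support points are the atoms, and the atoms of a Borel family of finitely atomic measures can be extracted Borel-measurably, for instance by taking limits of the finest dyadic-partition cells of positive mass with respect to a countable refining partition. On the exceptional null set, and wherever the construction fails, define $\gamma_{i,j}(y)$ via a Borel section of $\pi$, which exists because $\pi$ is a continuous surjection of compact metric spaces. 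This guarantees $\pi\circ\gamma_{i,j}=\id$ everywhere.
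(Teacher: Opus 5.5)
Your proposal is correct and follows the paper's proof essentially step for step. The shared steps are: a.e.\ constancy of $|\supp(\mu_y)|$ from the ergodicity of $\nu$; disjointness of the conditional supports via disjoint full-measure sets, using that every point of a finite support is an atom; averaging distinct ergodic measures to obtain $\ell\le d$ and $\sum_i k_i\le d$; and the convex-combination argument for the reverse inequality. The only difference is in the Borel selection, where you extract the atoms explicitly through a Borel linear order on $X$, whereas the paper applies Lusin--Novikov uniformization to the Borel support relation. Both approaches work, and both then patch the exceptional null set with a Borel section of $\pi$.
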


\begin{proof}
If $\mu\in\M_G^e(X)$, as the function $y\mapsto |\supp\mu_y|$  is  a
$G$-invariant measurable function of $y$ and hence is constant,  $\nu$-a.e.; denote its value by $k_\mu$.  By the definition of $d$, one has
$1\le k_\mu\le d$.

Let $\mu_1,\ldots,\mu_s$ be distinct ergodic measures and choose pairwise
disjoint invariant Borel sets $E_i$ with $\mu_i(E_i)=1$.  On a common conull
subset of $\Xeq$, each finite atomic measure $(\mu_i)_y$ is concentrated on
$E_i$.  Every point of its finite support is an atom of positive mass and
therefore belongs to $E_i$.  Thus the conditional supports for different
$i$ are disjoint.  On the same common conull set, the conditional measure of
$
 \frac1s\sum_{i=1}^s\mu_i
$
over $\pi$ is
$
 \frac1s\sum_{i=1}^s(\mu_i)_y
$
and has exactly $\sum_i k_{\mu_i}\ge s$ support points.  Hence $s\le d$.
Consequently there are only finitely many ergodic measures, say
$\mu_1,\ldots,\mu_\ell$.

The average $\ell^{-1}\sum_i\mu_i$ shows that
$\sum_i k_i\le d$.  Conversely, every invariant measure is a convex
combination of the finite family $\mu_i$, and its conditional support is
contained in the union of their conditional supports.  Hence
$d\le\sum_i k_i$, proving \eqref{eq:sum-conditional-degrees}.

For fixed $i$, choose a Borel version of the conditional kernel and
define the support relation
\[
 S_i:=\{(y,x)\in \Xeq\times X:x\in\supp((\mu_i)_y)\}.
\]
This relation is Borel.  Indeed, for a countable base $\mathcal U$ of $X$,
\[
 (\Xeq\times X)\setminus S_i
 =
 \bigcup_{U\in\mathcal U}
 \bigl(\{y:(\mu_i)_y(U)=0\}\times U\bigr).
\]
Let $Y_i\subseteq\Xeq$ be a conull Borel set such that
\[
|S_i(y)|=k_i
\qquad
\text{for every }y\in Y_i.
\]
Applying the Lusin--Novikov uniformization theorem
\cite[Theorem~18.10]{Kechris} to
$
S_i\cap(Y_i\times X),
$
we obtain Borel maps
$
\gamma_{i,1}^0,\ldots,\gamma_{i,k_i}^0:Y_i\to X
$
such that, for every $y\in Y_i$, the points
$
\gamma_{i,1}^0(y),\ldots,\gamma_{i,k_i}^0(y)
$
are pairwise distinct and
\[
S_i(y)
=
\{\gamma_{i,1}^0(y),\ldots,\gamma_{i,k_i}^0(y)\}.
\]
In particular,
\[
\pi(\gamma_{i,j}^0(y))=y
\qquad
(y\in Y_i,\ 1\le j\le k_i).
\]

To extend these maps to all of $\Xeq$, consider the closed relation
\[
R
:=
\{(y,x)\in\Xeq\times X:\pi(x)=y\}.
\]
Since $\pi:X\to\Xeq$ is a continuous surjection between compact
metric spaces, every vertical section of $R$ is nonempty and compact.
The Arsenin--Kunugui uniformization theorem
\cite[Theorem~18.18]{Kechris} therefore yields a Borel map
$
s:\Xeq\to X
$
such that
\[
\pi(s(y))=y
\qquad
\text{for every }y\in\Xeq.
\]
For $j=1,\ldots,k_i$, define
\[
\gamma_{i,j}(y)
:=
\begin{cases}
\gamma_{i,j}^0(y),&y\in Y_i,\\
s(y),&y\notin Y_i.
\end{cases}
\]
Then $\gamma_{i,j}:\Xeq\to X$ is Borel,
$
\pi\circ\gamma_{i,j}=\id_{\Xeq},
$
and
\[
\supp((\mu_i)_y)
=
\{\gamma_{i,1}(y),\ldots,\gamma_{i,k_i}(y)\}\qquad\text{for $\nu$-a.e. $y\in\Xeq$.}\qedhere
\]

\end{proof}

\section{Mixed compact-phase joinings and localization}
\label{sec:compact-phase-decompositions}

The purpose of this section is to convert information from the maximal
compact factors into local orbit configurations in a minimal $G$-system $(X,G)$.  Given ergodic
measures $\mu_1,\ldots,\mu_N$, we first decompose the product measure on
their compact factors into ergodic joint-phase orbit measures
$\lambda_t$.  We then lift these measures, through the disintegrations
over the compact factors, to mixed joinings $\Lambda_t$ on $X^N$.
We prove that almost every $\Lambda_t$ is ergodic and that the mass of
every Borel rectangle depends continuously on the phase parameter.

These results yield the local realization theorem (Theorem \ref{thm:mixed-phase-local-realization}).  Suppose that a phase tuple is compatible with
prescribed points $x_1,\ldots,x_N$, and that the corresponding mixed
joining assigns positive mass to a target rectangle
$C_1\times\cdots\times C_N$.  Then, inside arbitrary neighborhoods of
the points $x_q$, one can choose a new tuple whose coordinates visit the
corresponding targets simultaneously with positive upper density along any
prescribed   F{\o}lner sequence.  In particular, positive off-diagonal
mass produces a  mean-sensitive tuple.  We apply this
principle in two complementary ways: first to bound the number of distinct
measure--phase labels carried by a point, and then to bound the residual
support multiplicity over a fixed compact phase.

\subsection{Phase orbits in the product compact model}
\label{subsec:compact-phase-orbits}
For $q\in[N]$,  let $\mu_q\in\mc M_G^e(X)$, and choose a maximal compact
factor model
\[
\kappa_q:
(X,\mu_q,G)
\to
(Z_q,m_q,G),
\qquad
Z_q:=K_q/H_q,
\]
as in Theorem~\ref{thm:compact-homogeneous-model}.  

Let $m_{K_q}$ be normalized Haar measure on $K_q$, and let
$
\tau_q:K_q\to Z_q
$
be the quotient map.  Thus
$
m_q:=(\tau_q)_*m_{K_q}.
$

Put
\[
\mathbf Z:=Z_1\times\cdots\times Z_N,
\quad
\mathbf m:=m_1\otimes\cdots\otimes m_N,
\quad\text{and}\quad
P:=K_1\times\cdots\times K_N.
\]
Define
\[
\boldsymbol\alpha:G\to P,
\qquad
\boldsymbol\alpha(g):=
\bigl(\alpha_1(g),\ldots,\alpha_N(g)\bigr),
\]
and let
\begin{equation}\label{eq:phase-groups}
L:=\overline{\boldsymbol\alpha(G)}\le P.
\end{equation}
We write
$
m_P:=m_{K_1}\otimes\cdots\otimes m_{K_N}
$
and denote normalized Haar measure on $L$ by $m_L$.

Define the quotient map
\begin{equation}\label{eq:quotient map}
\tau:P\to\mathbf Z,
\qquad
\tau(t_1,\ldots,t_N)
:=
(t_1H_1,\ldots,t_NH_N).    
\end{equation}
For $t\in P$, define
\begin{equation}\label{eq:lambda-phase}
\lambda_t
:=
\int_L\delta_{\tau(\ell t)}\,dm_L(\ell),
\end{equation}
where multiplication in $\ell t$ is coordinatewise.
\begin{lemma}
\label{lem:compact-phase-orbit-measures}
The family $t\mapsto\lambda_t$ has the following properties.
\begin{enumerate}[label=\textup{(\roman*)},leftmargin=2.2em]
\item
It is continuous for the weak* topology.

\item
For every $t\in P$, the measure $\lambda_t$ is $G$-invariant and
ergodic.

\item
For every $q$, the $q$-th marginal of $\lambda_t$ is $m_q$.

\item 
$
\int_P\lambda_t\,dm_P(t)=\mathbf m.
$
\end{enumerate}
\end{lemma}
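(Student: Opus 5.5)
The plan is to read everything off the definition $\lambda_t=(\tau\circ R_t)_*m_L$, where $R_t(\ell):=\ell t$. In other words, $\lambda_t$ is the image of Haar measure on the compact group $L$ under the continuous map $\ell\mapsto\tau(\ell t)$. Its support is the compact orbit $\tau(Lt)$, and the action of $g\in G$ on $\mathbf Z$ is left multiplication by $\boldsymbol\alpha(g)\in L$ composed with $\tau$.

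For (i), fix $f\in C(\mathbf Z)$. The map $(\ell,t)\mapsto f(\tau(\ell t))$ is continuous on the compact space $L\times P$, hence uniformly continuous. Therefore $t\mapsto\int_L f(\tau(\ell t))\,dm_L(\ell)$ is continuous, which gives weak* continuity.

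For (ii), invariance follows by writing, for $g\in G$,
\[
\int f\,d(g_*\lambda_t)=\int_L f(\tau(\boldsymbol\alpha(g)\ell t))\,dm_L(\ell)=\int f\,d\lambda_t,
\]
using left invariance of $m_L$ and $\boldsymbol\alpha(g)\in L$. Ergodicity is the step needing the most care. The plan is to identify $(\tau(Lt),\lambda_t,G)$ with the homogeneous space $L/(L\cap tSt^{-1})$, where $S:=H_1\times\cdots\times H_N$, equipped with its Haar-induced measure and the rotation action of the dense subgroup $\boldsymbol\alpha(G)$. Suppose $E$ is a $G$-invariant Borel set. Its pullback $\tilde E\subseteq L$ is then invariant under left translation by $\boldsymbol\alpha(G)$. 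Consequently $\ell\mapsto m_L(\ell\tilde E\,\triangle\,\tilde E)$ is continuous on $L$ (by continuity of translation in $L^1(m_L)$) and vanishes on the dense set $\boldsymbol\alpha(G)$. Hence it vanishes on all of $L$, and a fully $L$-invariant set has measure $0$ or $1$ (for instance by Fubini). So $\lambda_t$ is ergodic.

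For (iii), project to the $q$-th coordinate. The measure becomes the pushforward of $m_L$ under $\ell\mapsto\ell_q t_qH_q$. The image of $m_L$ under the coordinate homomorphism $\ell\mapsto\ell_q$ is Haar measure on the closed subgroup $\overline{\alpha_q(G)}$, and this subgroup equals $K_q$ because $\alpha_q$ has dense image. Since $m_{K_q}$ is invariant under right multiplication by $t_q$, the marginal is $(\tau_q)_*m_{K_q}=m_q$.

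For (iv), integrate against $f\in C(\mathbf Z)$ and apply Fubini:
\[
\int_P\int_L f(\tau(\ell t))\,dm_L\,dm_P(t)=\int_L\int_P f(\tau(\ell t))\,dm_P(t)\,dm_L(\ell).
\]
For each fixed $\ell\in L\subseteq P$, left invariance of $m_P$ turns the inner integral into $\int_P f\circ\tau\,dm_P$. This equals $\int f\,d\mathbf m$, because $\tau_*m_P=\bigotimes_q(\tau_q)_*m_{K_q}=\mathbf m$.

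The main obstacle is the ergodicity in (ii). The key point there is that density of $\boldsymbol\alpha(G)$ in $L$, combined with continuity of translations in $L^1(m_L)$, upgrades $G$-invariance to full $L$-invariance.
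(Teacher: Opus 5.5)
Your proof is correct. Parts (i), (iii) and (iv) follow the paper's argument: joint continuity on the compact space $P\times L$; surjectivity of $L\to K_q$ together with right invariance of $m_{K_q}$; and Fubini together with left invariance of $m_P$. The invariance half of (ii) is also the same.

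The genuine difference is in the ergodicity half of (ii).

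\begin{itemize}
\item \textbf{The paper's route.} It works through the orbit $\mathcal O_t=\tau(Lt)$. By density of $\boldsymbol\alpha(G)$ in $L$, every $G$-invariant probability measure on $\mathcal O_t$ is $L$-invariant. Since $L$ acts transitively on $\mathcal O_t$, there is exactly one such measure, namely $\lambda_t$, which is therefore ergodic.
\item \textbf{Your route.} You regard $\lambda_t$ as the image of $m_L$ under the $G$-equivariant map $\ell\mapsto\tau(\ell t)$. You then show directly that translation by the dense subgroup $\boldsymbol\alpha(G)$ is ergodic on $(L,m_L)$. This uses continuity of $\ell\mapsto m_L(\ell\tilde E\triangle\tilde E)$ followed by a Fubini argument on $L$, e.g.\ the identity $\int_L m_L(\ell\tilde E\triangle\tilde E)\,dm_L(\ell)=2m_L(\tilde E)\bigl(1-m_L(\tilde E)\bigr)$.
\end{itemize}

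Your route is more self-contained. It needs neither the uniqueness of invariant measures on compact homogeneous spaces nor the weak$^*$ continuity of the induced $L$-action on measures. Your announced identification with $L/(L\cap tSt^{-1})$ is never actually used, since you pull back to $L$ directly.

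The paper's route gives a stronger conclusion. It shows that $(\mathcal O_t,G)$ is uniquely ergodic, so $\lambda_t$ is the only $G$-invariant measure on its support.
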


\begin{proof}
For $F\in C(\mathbf Z)$,
\[
\int_{\mathbf Z}F\,d\lambda_t
=
\int_LF\bigl(\tau(\ell t)\bigr)\,dm_L(\ell).
\]
The map
$
(t,\ell)\mapsto F\bigl(\tau(\ell t)\bigr)
$
is continuous on the compact space $P\times L$.  It follows that the
right-hand side depends continuously on $t$, proving
\textup{(i)}.

For $g\in G$, left invariance of $m_L$ gives
\[
g_*\lambda_t=
\int_L
\delta_{\tau(\boldsymbol\alpha(g)\ell t)}\,dm_L(\ell)=
\int_L
\delta_{\tau(\ell t)}\,dm_L(\ell)
=
\lambda_t.
\]
Thus $\lambda_t$ is $G$-invariant.

Let
$
\mathcal O_t:=\tau(Lt).
$
Since $\boldsymbol\alpha(G)$ is dense in $L$, every
$G$-invariant probability measure on $\mathcal O_t$ is
$L$-invariant.  The compact group $L$-action on $\mathcal O_t$ is transitive, so
$\mathcal O_t$ carries a unique $L$-invariant probability measure,
namely $\lambda_t$.  Hence $\lambda_t$ is ergodic.  This proves
\textup{(ii)}.

For each $q\in[N]$, the coordinate projection
$
\operatorname{pr}_q:L\to K_q
$
has compact image containing the dense subgroup $\alpha_q(G)$, and
hence is surjective.  Therefore
\[
(\operatorname{pr}_q)_*m_L=m_{K_q}.
\]
Using the right invariance of Haar measure, the $q$-th marginal of
$\lambda_t$ is
\[
\bigl(k\mapsto kt_qH_q\bigr)_*m_{K_q}
=
(\tau_q)_*m_{K_q}
=
m_q.
\]
This proves \textup{(iii)}.

Finally, for $F\in C(\mathbf Z)$, Fubini's theorem and left invariance of
$m_P$ give
\begin{align*}
\int_P\int_{\mathbf Z}F\,d\lambda_t\,dm_P(t)
=
\int_L\int_P
F\bigl(\tau(\ell t)\bigr)\,dm_P(t)\,dm_L(\ell)=
\int_PF\bigl(\tau(t)\bigr)\,dm_P(t)=
\int_{\mathbf Z}F\,d\mathbf m.
\end{align*}
This proves \textup{(iv)}.
\end{proof}

\subsection{Lifting phase measures to the original systems}
\label{subsec:lifted-phase-joinings}

For each $q\in[N]$, choose a Borel disintegration
$
\mu_q
=
\int_{Z_q}\mu_{q,z}\,dm_q(z)
$
over $\kappa_q$.  Since $G$ is countable, the kernels may be chosen
so that there exists a $G$-invariant conull Borel set
$Z_q^0\subseteq Z_q$ on which
\[
g_*\mu_{q,z}=\mu_{q,gz}
\qquad
(g\in G)
\qquad\text{and}\qquad
\mu_{q,z}\bigl(\kappa_q^{-1}(\{z\})\bigr)=1.
\]

Put
\[
\mathbf X:=X^N
\qquad\text{and}\qquad
\boldsymbol\mu:=\mu_1\otimes\cdots\otimes\mu_N.
\]
For
$
z=(z_1,\ldots,z_N)\in\mathbf Z,
$
define
\[
\xi_z
:=
\mu_{1,z_1}\otimes\cdots\otimes\mu_{N,z_N}.
\]
Then $z\mapsto\xi_z$ is a Borel probability kernel. Define the Borel lifting map
\[
\mathcal T:
\Prob(\mathbf Z)\to\Prob(\mathbf X),
\qquad
\mathcal T(\beta)
:=
\int_{\mathbf Z}\xi_z\,d\beta(z).
\]
Then 
\begin{equation}\label{eq:product-compact-disintegration}
\boldsymbol\mu
=
\int_{\mathbf Z}\xi_z\,d\mathbf m(z)=\mc T(\mathbf m).
\end{equation}

For $t\in P$, define
\begin{equation}\label{eq:Lambda-phase}
\Lambda_t
:=
\mathcal T(\lambda_t)=\int_L\mu_{1,\ell_1t_1H_1}\otimes \cdots\otimes\mu_{N,\ell_Nt_NH_N}\ dm_L(\ell).
\end{equation}
Since the $q$-th marginal of every $\lambda_t$ is $m_q$, the
definition of $\Lambda_t$ is unaffected by changes to the conditional
kernels on $m_q$-null sets.

\begin{proposition}
\label{prop:lifted-phase-joinings}
The map
$
t\mapsto\Lambda_t
$
is Borel.  For every $t\in P$, the measure $\Lambda_t$ is a
$G$-invariant joining of
$
\mu_1,\ldots,\mu_N.
$
Moreover,
\begin{equation}\label{eq:phase-barycenter}
\int_P\Lambda_t\,dm_P(t)
=
\boldsymbol\mu.
\end{equation}
\end{proposition}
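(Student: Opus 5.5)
The plan is to obtain all three assertions from the definition $\Lambda_t=\mathcal T(\lambda_t)$, using Lemma~\ref{lem:compact-phase-orbit-measures} and the invariance properties of the kernels $\mu_{q,z}$ on the conull sets $Z_q^0$.

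\textbf{Borel measurability.} It suffices to show that $t\mapsto\Lambda_t(B)$ is Borel for every Borel $B\subseteq\mathbf X$, since the weak* Borel structure on $\Prob(\mathbf X)$ is generated by such evaluation maps. First I will show that $z\mapsto\xi_z(B)$ is Borel on $\mathbf Z$. For rectangles this holds because $\xi_z(B_1\times\cdots\times B_N)=\prod_q\mu_{q,z_q}(B_q)$. A monotone class ($\pi$--$\lambda$) argument then extends it to all Borel $B$. Next,
\[
\Lambda_t(B)=\int_L\xi_{\tau(\ell t)}(B)\,dm_L(\ell),
\]
and the integrand is jointly Borel in $(\ell,t)$, being the composition of the continuous map $(\ell,t)\mapsto\tau(\ell t)$ with a Borel function. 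Fubini's theorem then shows that the integral is Borel in $t$.

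\textbf{Joining property.} For each $q$, the $q$-th marginal of $\Lambda_t$ is $\int_{\mathbf Z}\mu_{q,z_q}\,d\lambda_t(z)$. By Lemma~\ref{lem:compact-phase-orbit-measures}(iii) this equals $\int_{Z_q}\mu_{q,z}\,dm_q(z)=\mu_q$. The same observation shows that modifying the kernels on $m_q$-null sets does not change $\Lambda_t$, because the set $\{z:z_q\notin Z_q^0\}$ is $\lambda_t$-null.

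\textbf{$G$-invariance.} For $g\in G$ and every $z$ with all $z_q\in Z_q^0$, the kernel identity gives $g_*\xi_z=\xi_{gz}$, where $g$ acts diagonally. Since this set of $z$ is $\lambda_t$-conull, we get
\[
g_*\Lambda_t=\int g_*\xi_z\,d\lambda_t(z)=\int\xi_{gz}\,d\lambda_t(z)=\int\xi_z\,d(g_*\lambda_t)(z)=\Lambda_t,
\]
using Lemma~\ref{lem:compact-phase-orbit-measures}(ii) in the last step. The only point needing care is that $Z_q^0$ is $G$-invariant, so the conull set is preserved under $z\mapsto gz$.

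\textbf{Barycenter formula.} For Borel $B$, Fubini's theorem (justified by the joint measurability established above) together with Lemma~\ref{lem:compact-phase-orbit-measures}(iv) gives
\[
\int_P\Lambda_t(B)\,dm_P(t)=\int_P\int\xi_z(B)\,d\lambda_t(z)\,dm_P(t)=\int\xi_z(B)\,d\mathbf m(z)=\boldsymbol\mu(B).
\]
The last equality is \eqref{eq:product-compact-disintegration}. Part (iv) is stated for continuous $F$, so I extend it to bounded Borel $F$, such as $z\mapsto\xi_z(B)$, by a monotone class argument. I expect this extension, and the joint measurability behind the Fubini steps, to be the only real technical obstacle; everything else is formal.
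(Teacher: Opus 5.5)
Your proposal is correct and follows the paper's proof essentially step by step: the marginals come from Lemma~\ref{lem:compact-phase-orbit-measures}(iii), invariance from $g_*\xi_z=\xi_{gz}$ on a $\lambda_t$-conull set together with Lemma~\ref{lem:compact-phase-orbit-measures}(ii), and the barycenter formula from Lemma~\ref{lem:compact-phase-orbit-measures}(iv), \eqref{eq:product-compact-disintegration} and Fubini. The differences are minor. You get Borelness from the joint measurability of $(\ell,t)\mapsto\xi_{\tau(\ell t)}(B)$ plus Fubini, whereas the paper composes the Borel map $\mathcal T$ with $t\mapsto\lambda_t$. You also spell out the monotone-class extension of Lemma~\ref{lem:compact-phase-orbit-measures}(iv) to bounded Borel integrands, which the paper leaves implicit.
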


\begin{proof}
Since the map
$\mc T$
is Borel on $\Prob(\mathbf Z)$ and 
$t\mapsto\lambda_t$ is Borel, it follows that
\[
t\mapsto
\Lambda_t(B)
=
\mathcal T(\lambda_t)(B)
\]
is Borel.  The evaluation maps
$
\eta\mapsto\eta(B),$ $
B\in\mathcal B(\mathbf X)$
generate the Borel structure of $\Prob(\mathbf X)$.  Therefore
$t\mapsto\Lambda_t$ is Borel.

For a Borel set $A\subseteq X$,
\begin{align*}
(\operatorname{pr}_q)_*\Lambda_t(A)
=
\int_{\mathbf Z}\mu_{q,z_q}(A)\,d\lambda_t(z)=
\int_{Z_q}\mu_{q,z_q}(A)\,dm_q(z_q)=
\mu_q(A).
\end{align*}
Thus $\Lambda_t$ is a joining of the prescribed marginals.
Using the $G$-invariance of $\lambda_t$, we obtain
\[
g_*\Lambda_t
=
\int_{\mathbf Z}g_*\xi_z\,d\lambda_t(z)
=
\int_{\mathbf Z}\xi_{gz}\,d\lambda_t(z)
=
\int_{\mathbf Z}\xi_z\,d\lambda_t(z)
=
\Lambda_t.
\]

Finally, by Lemma~\ref{lem:compact-phase-orbit-measures},
\eqref{eq:product-compact-disintegration}, and Fubini's theorem,
\[
\int_P\Lambda_t\,dm_P(t)
=
\int_P\int_{\mathbf Z}\xi_z\,d\lambda_t(z)\,dm_P(t)=
\int_{\mathbf Z}\xi_z\,d\mathbf m(z)=
\boldsymbol\mu.\qedhere
\]
\end{proof}

We next prove that almost every lifted phase joining $
\Lambda_t=\mathcal T(\lambda_t)
$ is ergodic.
\begin{proposition}
\label{prop:mixed-phase-decomposition}
The joining $\Lambda_t$ is ergodic for
$m_P$-a.e. $t\in P$.
\end{proposition}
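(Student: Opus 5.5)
The plan is to run the mean ergodic theorem simultaneously on $\boldsymbol\mu$ and on the fibres $\Lambda_t$ of the decomposition \eqref{eq:phase-barycenter}, using Lemma~\ref{lem:mixed-invariant-algebra} to see that the invariant $\sigma$-algebra of $\boldsymbol\mu$ only sees the compact phase. Fix a F{\o}lner sequence $\Fcal$ and a countable family $(f_k)_{k\ge1}$ dense in $C(\mathbf X)$. By Lemma~\ref{lem:mean-ergodic} applied to each $\Lambda_t$, it suffices to find an $m_P$-conull set of $t$ for which, for every $k$, $A_n^{\Fcal}f_k$ converges in $L^2(\Lambda_t)$ to the constant $\int f_k\,d\Lambda_t$. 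Indeed, this gives $\mathbb E_{\Lambda_t}(f_k\mid\mc I_{\Lambda_t})=\int f_k\,d\Lambda_t$ for all $k$. By density this identity extends to all continuous functions, and then to all of $L^2(\Lambda_t)$, so every invariant set is trivial.

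First I identify the limit on the product. Put $\Phi_k(z):=\int f_k\,d\xi_z$ on $\mathbf Z$, and write $\boldsymbol\kappa:=\kappa_1\times\cdots\times\kappa_N$. Then $\mathbb E_{\boldsymbol\mu}(f_k\mid\cK_{\mu_1}\otimes\cdots\otimes\cK_{\mu_N})=\Phi_k\circ\boldsymbol\kappa$. Since $\mc I_{\boldsymbol\mu}$ is contained in this product algebra by Lemma~\ref{lem:mixed-invariant-algebra}, the tower property gives
\[
\mathbb E_{\boldsymbol\mu}(f_k\mid\mc I_{\boldsymbol\mu})=\bigl(\mathbb E_{\mathbf m}(\Phi_k\mid\mc I_{\mathbf m})\bigr)\circ\boldsymbol\kappa .
\]
Now $\mathbf m=\int_P\lambda_t\,dm_P(t)$ with every $\lambda_t$ ergodic, by Lemma~\ref{lem:compact-phase-orbit-measures}. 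Consequently the $G$-invariant function $\Psi_k:=\mathbb E_{\mathbf m}(\Phi_k\mid\mc I_{\mathbf m})$ is $\lambda_t$-a.e.\ constant for $m_P$-a.e.\ $t$, and its value there is $\int\Phi_k\,d\lambda_t=\int f_k\,d\Lambda_t$. Here one checks that $\int\Psi_k\,d\lambda_t=\int\Phi_k\,d\lambda_t$ for a.e.\ $t$. The natural way is to use the explicit homogeneous structure: $\lambda_t$ depends only on the coset $Lt$, so it is the conditional measure of $\mathbf m$ along the map $t\mapsto Lt$, and invariant functions factor through this quotient.

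Next I transfer the convergence to the fibres. Let $c_k(t):=\int f_k\,d\Lambda_t$. Lifting via $\mathcal T$, for $m_P$-a.e.\ $t$ one has $\mathbb E_{\boldsymbol\mu}(f_k\mid\mc I_{\boldsymbol\mu})=c_k(t)$ $\Lambda_t$-a.e. (this uses that $\Lambda_t$ is carried by $\boldsymbol\kappa^{-1}(\supp\lambda_t)$ up to the kernel conventions). By \eqref{eq:phase-barycenter} we then get
\[
\int_P\bigl\|A_n^{\Fcal}f_k-c_k(t)\bigr\|_{L^2(\Lambda_t)}^2\,dm_P(t)=\bigl\|A_n^{\Fcal}f_k-\mathbb E_{\boldsymbol\mu}(f_k\mid\mc I_{\boldsymbol\mu})\bigr\|_{L^2(\boldsymbol\mu)}^2\longrightarrow0 .
\]
By a diagonal argument over $k$, there is a subsequence $n_j$ and a conull set of $t$ on which $A_{n_j}^{\Fcal}f_k\to c_k(t)$ in $L^2(\Lambda_t)$ for every $k$. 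The full sequence converges in $L^2(\Lambda_t)$ to $\mathbb E_{\Lambda_t}(f_k\mid\mc I_{\Lambda_t})$ by Lemma~\ref{lem:mean-ergodic}. The two limits therefore agree, which gives ergodicity by the first paragraph.

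The main obstacle is the measure-theoretic bookkeeping in the second step. One has to show that the invariant conditional expectation on $(\mathbf Z,\mathbf m)$ is, for $m_P$-almost every $t$, constant on the ergodic component $\lambda_t$ with the correct value, even though $t\mapsto\lambda_t$ is an orbit-averaged parametrisation rather than a genuine disintegration indexed by $\mathbf Z$. I would handle this through Fubini on $P\times L$, exactly as in part (iv) of Lemma~\ref{lem:compact-phase-orbit-measures}.
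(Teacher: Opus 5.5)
Your proposal is correct, but it follows a different route from the paper.

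\textbf{The paper's route.} The paper uses no F{\o}lner averages at this point. After Lemma~\ref{lem:mixed-invariant-algebra} it takes a standard Borel model $r:\mathbf X\to\Omega$ of $\mathcal I_{\boldsymbol\mu}$ and factors it as $r=\sigma\circ\kappa$. It notes that $\beta_\omega=\kappa_*\eta_\omega$ is at once the ergodic decomposition of $\mathbf m$ and its disintegration over $\sigma$. Uniqueness of disintegration applied to \eqref{eq:product-compact-disintegration} gives $\eta_\omega=\mathcal T(\beta_\omega)$. Uniqueness of the ergodic decomposition of $\mathbf m$ then gives $(t\mapsto\lambda_t)_*m_P=(\omega\mapsto\beta_\omega)_*\vartheta$, so $\Lambda_t=\mathcal T(\lambda_t)$ is ergodic for a.e.\ $t$.

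\textbf{Your route.} You push the mean ergodic theorem from $\boldsymbol\mu$ down to almost every fibre, using the $L^2$ Fubini identity that comes from \eqref{eq:phase-barycenter}. This is legitimate because $\boldsymbol\kappa_*\Lambda_t=\lambda_t$ for every $t$, since the marginals of $\lambda_t$ are the $m_q$.

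\textbf{The step you flag.} This is exactly where the paper uses uniqueness of the ergodic decomposition. Ergodicity of each $\lambda_t$ together with $\mathbf m=\int_P\lambda_t\,dm_P$ does not, by Fubini alone, force the constant value of $\Psi_k$ on $\lambda_t$ to equal $\int\Phi_k\,d\lambda_t$. Your homogeneous fix closes it if you state it this way. The measure $\mathbf m$ is $L$-invariant, $L$ acts strongly continuously on $L^2(\mathbf m)$, and $\boldsymbol\alpha(G)$ is dense in $L$. Hence every $G$-invariant function is $L$-invariant, and
$\mathbb E_{\mathbf m}(\Phi_k\mid\mathcal I_{\mathbf m})(z)=\int_L\Phi_k(\ell z)\,dm_L(\ell)$, which equals $\int\Phi_k\,d\lambda_t$ at $z=\tau(t)$. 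The relevant quotient is the $L$-orbit map on $\mathbf Z$, or equivalently $t\mapsto Lt$ on $(P,m_P)$ after pulling back by $\tau$; it is not a map defined on $\mathbf Z$ by $t\mapsto Lt$.

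\textbf{A small addition.} Your tower step uses $\mathcal I_{\boldsymbol\mu}=\boldsymbol\kappa^{-1}\mathcal I_{\mathbf m}$ modulo $\boldsymbol\mu$. This follows from Lemma~\ref{lem:mixed-invariant-algebra}, and you should record it explicitly.

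\textbf{What each buys.} The paper's argument is shorter and purely structural, resting on Choquet-type uniqueness and a standard model for the invariant $\sigma$-algebra. Yours is more elementary and more explicit: it identifies $\mathbb E_{\boldsymbol\mu}(f\mid\mathcal I_{\boldsymbol\mu})(x)$ as $\int f\,d\Lambda_t$ with $\tau(t)=\boldsymbol\kappa(x)$. The cost is extra bookkeeping with Borel versions, subsequences and density. The diagonal subsequence is in fact unnecessary, because the full averages already converge in each $L^2(\Lambda_t)$.
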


\begin{proof}
Let
$
\kappa
:=
\kappa_1\times\cdots\times\kappa_N:
\mathbf X\to\mathbf Z.
$
By Lemma~\ref{lem:mixed-invariant-algebra},
\begin{equation}\label{eq:mixed-invariant-factor}
\mathcal I_{\boldsymbol\mu}
\subseteq
\kappa^{-1}\mathcal B(\mathbf Z)
\qquad
\pmod{\boldsymbol\mu}.
\end{equation}
Choose a standard Borel model $\Omega$ for the invariant sigma-algebra
$\mathcal I_{\boldsymbol\mu}$, together with an invariant component map
$
r:\mathbf X\to\Omega
$
and an ergodic decomposition
$
\boldsymbol\mu
=
\int_\Omega\eta_\omega\,d\vartheta(\omega),
$
where
$
r_*\boldsymbol\mu=\vartheta
$
and $\eta_\omega$ is the conditional measure over $r$.

By \eqref{eq:mixed-invariant-factor}, the map $r$ is measurable with
respect to the factor $\kappa^{-1}\mc B(\mathbf Z)$.  Therefore
there exists a Borel map
$
\sigma:\mathbf Z\to\Omega
$
such that
\begin{equation}\label{eq:component-factorization}
r=\sigma\circ\kappa
\qquad
\boldsymbol\mu\text{-a.e}.
\end{equation}

For $\vartheta$-a.e. $\omega\in\Omega$, put
$
\beta_\omega:=\kappa_*\eta_\omega.
$
Then $\beta_\omega$ is ergodic for $\vartheta$-a.e. $\omega\in\Omega$, and 
\[
\mathbf m
=
\kappa_*\boldsymbol\mu
=
\int_\Omega\beta_\omega\,d\vartheta(\omega).
\]
Consequently,
$
Q_\beta
:=
(\omega\mapsto\beta_\omega)_*\vartheta
$
is the ergodic-decomposition measure of $\mathbf m$.
Moreover, by \eqref{eq:component-factorization} and the fact that
$\eta_\omega$ is concentrated on the fiber $r^{-1}(\{\omega\})$,
the measure $\beta_\omega$ is concentrated on
$\sigma^{-1}(\{\omega\})$.  Thus $(\beta_\omega)_{\omega\in\Omega}$
is the disintegration of $\mathbf m$ over $\sigma$.
By the uniqueness of measure disintegration applied to
\eqref{eq:product-compact-disintegration}, we obtain
\begin{equation}\label{eq:tower-identity}
\eta_\omega
=
\mathcal T(\beta_\omega)
\qquad
\text{for $\vartheta$-a.e. $\omega\in\Omega$.}
\end{equation}

Note that Lemma~\ref{lem:compact-phase-orbit-measures} shows that
$
Q_\lambda
:=
(t\mapsto\lambda_t)_*m_P
$
is another probability measure on $\mathcal M_G^e(\mathbf Z)$ with
barycenter $\mathbf m$.  By uniqueness of the ergodic decomposition,
\begin{equation}\label{eq:ergodic-decomposition-laws}
Q_\beta=Q_\lambda.
\end{equation}
In particular, by \eqref{eq:tower-identity},
\[
Q_\lambda\bigl(
\mathcal T^{-1}(\M_G^e(\mathbf X))
\bigr)
=Q_\beta\bigl(
\mathcal T^{-1}(\M_G^e(\mathbf X))
\bigr)
=
1.
\]
Since
$
\mathcal T(\lambda_t)=\Lambda_t,
$
it follows that
\[
m_P
\bigl(
\{t\in P:\Lambda_t\text{ is ergodic}\}
\bigr)
=
1.\qedhere
\]
\end{proof}

The local realization argument below requires continuity not only against
continuous test functions, but also for the masses assigned by
$\Lambda_t$ to arbitrary Borel rectangles.  The key input is the strong
continuity of right translations on $L^1$ of a compact group.

\begin{lemma}
\label{lem:rectangle-continuity}
Let $C_q\subseteq X$, $q\in[N]$, be Borel sets.  Then
\[
\Phi_C(t)
:=
\Lambda_t(C_1\times\cdots\times C_N)
\]
is continuous on $P$.  Moreover, there exists a unique continuous function
$
\widetilde\Phi_C:\mathbf Z\to[0,1]
$
such that
\[
\Phi_C=\widetilde\Phi_C\circ\tau,
\]
where $\tau$ is the quotient map defined as in \eqref{eq:quotient map}.
\end{lemma}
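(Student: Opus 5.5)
The plan is to write $\Phi_C(t)$ as an integral over the compact group $P$ of a fixed bounded Borel function composed with right translation by $t$. Continuity in $t$ will then follow from the strong continuity of right translation on $L^1(P,m_P)$.

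For $q\in[N]$, put
\[
f_q(k):=\mu_{q,\tau_q(k)}(C_q)\qquad(k\in K_q).
\]
This is a bounded Borel function on $K_q$. By construction it is right $H_q$-invariant, and by the remark after \eqref{eq:Lambda-phase} it is well defined up to $m_{K_q}$-null sets. Put $F(s):=\prod_q f_q(s_q)$ for $s\in P$. By \eqref{eq:Lambda-phase},
\[
\Phi_C(t)=\int_L F(\ell t)\,dm_L(\ell).
\]
The difficulty is that $L$ may be $m_P$-null, so this is not yet an $L^1(P)$ expression. To fix this, I will average over the left translates of $L$, using left $\boldsymbol\alpha(G)$-invariance.

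The key observation is that $\Phi_C$ is constant on right cosets $Lt$, since $m_L$ is left invariant. It follows that for every $s\in P$,
\[
\Phi_C(t)=\int_L\int_L F(\ell\ell' t)\,dm_L\,dm_L .
\]
This form alone does not help, so I will try the following instead. For any $p\in P$, compare $\Phi_C(pt)$ with $\Phi_C(t)$. Using the marginal statement in Lemma~\ref{lem:compact-phase-orbit-measures}(iii), each coordinate of $\ell t$ is Haar distributed. Hence
\[
|\Phi_C(t)-\Phi_C(t')|\le\sum_q\int_L|f_q(\ell_qt_q)-f_q(\ell_qt'_q)|\,dm_L .
\]
Here I telescope the product of functions bounded by $1$, changing one coordinate at a time. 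Since the $q$-th marginal of $m_L$ is $m_{K_q}$, the $q$-th summand equals
\[
\int_{K_q}|f_q(kt_q)-f_q(kt'_q)|\,dm_{K_q}(k)=\|R_{t_q}f_q-R_{t'_q}f_q\|_{L^1(K_q)}.
\]
This tends to $0$ as $t'_q\to t_q$, by strong continuity of the right regular representation on $L^1$ of a compact group. That last fact I will justify by approximating $f_q$ in $L^1$ by continuous functions, which are uniformly continuous. The resulting bound also shows that $\Phi_C$ is independent of the chosen kernel versions.

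For the factorization, note that each $f_q$ is right $H_q$-invariant. So if $t'=th$ with $h\in H_1\times\cdots\times H_N$, then $F(\ell t')=F(\ell t)$ pointwise, and hence $\Phi_C(th)=\Phi_C(t)$. Thus $\Phi_C$ is constant on the fibers of $\tau$. The map $\tau$ is a continuous open surjection between compact spaces, hence a quotient map. Therefore $\Phi_C$ descends to a unique function $\widetilde\Phi_C$ on $\mathbf Z$ with $\Phi_C=\widetilde\Phi_C\circ\tau$, and $\widetilde\Phi_C$ is continuous by the quotient property. Uniqueness holds because $\tau$ is surjective, and the values lie in $[0,1]$ since $\Lambda_t$ is a probability measure.

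The main obstacle is the telescoping step. It rests on the fact that integrating $|f_q(\ell_qt_q)-f_q(\ell_qt'_q)|$ against $m_L$ reduces to integrating against the marginal $m_{K_q}$. This is exactly the pushforward identity $(\operatorname{pr}_q)_*m_L=m_{K_q}$ established in Lemma~\ref{lem:compact-phase-orbit-measures}. It is what avoids the possible singularity of $m_L$ with respect to $m_P$.
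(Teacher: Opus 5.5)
Your argument is correct and is essentially the paper's proof. Both telescope the product of the right $H_q$-invariant functions $f_q(k)=\mu_{q,kH_q}(C_q)$, use $(\operatorname{pr}_q)_*m_L=m_{K_q}$ to bound each term by $\|R_{t_q}f_q-R_{t'_q}f_q\|_{L^1(K_q)}$, invoke strong continuity of right translation, and descend through $\tau$ by $H$-invariance. The abandoned detours (the $L^1(P)$ plan and the right-coset averaging) play no role and can simply be deleted.
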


\begin{proof}
For each $q\in[N]$, define
\[
\varphi_q(k)
:=
\mu_{q,kH_q}(C_q),
\qquad
k\in K_q.
\]
Then $\varphi_q:K_q\to[0,1]$ is Borel and right $H_q$-invariant.
By the definition of $\Lambda_t$,
\[
\Phi_C(t)
=
\int_L
\prod_{q=1}^N
\varphi_q(\ell_qt_q)\,dm_L(\ell).
\]

Let
$
s=(s_1,\ldots,s_N)\in P.
$
Since $0\le\varphi_q\le1$, one has
\begin{align*}
|\Phi_C(ts)-\Phi_C(t)|
&\le
\sum_{q=1}^N
\int_L
\left|
\varphi_q(\ell_qt_qs_q)
-
\varphi_q(\ell_qt_q)
\right|
dm_L(\ell)\\
&\le \sum_{q=1}^N\int_{K_q}
\left|
\varphi_q(kt_qs_q)-\varphi_q(kt_q)
\right|
\,dm_{K_q}(k)\\
&=\sum_{q=1}^N \|R_{s_q}\varphi_q-\varphi_q\|_{L^1(K_q)},
\end{align*}
where
$
R_s\varphi(u):=\varphi(us).
$
Right translations are strongly continuous on $L^1(K_q)$, so the
right-hand side tends to zero as $s\to e_P$.  This proves continuity of
$\Phi_C$.

Let
$
H:=H_1\times\cdots\times H_N.
$
For $h=(h_1,\ldots,h_N)\in H$, one has
$
\Phi_C(th)=\Phi_C(t).
$
Since $\tau:P\to\mathbf Z$ is a quotient map and $\Phi_C$ is continuous, there is a
unique continuous function
$
\widetilde\Phi_C:\mathbf Z\to[0,1]
$
such that
\[
\Phi_C=\widetilde\Phi_C\circ\tau.\qedhere
\]
\end{proof}

\subsection{From phase joinings to local mean sensitivity}\label{sec:localization}
In this subsection, we use the mixed phase joinings constructed above to
prove a local realization theorem.  It converts positive mass of a target
rectangle into positive-density simultaneous visits by a tuple chosen in
arbitrary prescribed source neighborhoods.  The corresponding local mean
sensitivity criterion will be derived afterward.

For $\mu_1,\ldots,\mu_N\in\M_G^e(X)$,  let
\[
 \rho_q:=(\id_X,\kappa_q)_*\mu_q,
 \qquad
 \Gamma_q:=\supp(\rho_q)\subseteq X\times Z_q,\qquad q\in[N]
\]
and define the compact nonempty label set
\begin{equation}\label{eq:label-set}
 A_q(x):=A_{\mu_q}(x)=\{z\in Z_q:(x,z)\in\Gamma_q\},\qquad q\in[N].
\end{equation}
Since $(X,G)$ is minimal, every invariant probability measure has full
support.  Hence
\[
\operatorname{pr}_X(\Gamma_q)
=
\supp\bigl((\operatorname{pr}_X)_*\rho_q\bigr)
=
\supp(\mu_q)
=
X.
\]
Consequently, $A_q(x)$ is nonempty and compact for every $x\in X$.

\begin{theorem}
\label{thm:mixed-phase-local-realization}
Let
$
\mu_1,\ldots,\mu_N\in\mathcal M_G^e(X),
$
and let $\Lambda_t$, $t\in P:=K_1\times\cdots\times K_N$, be the
associated mixed compact-phase joinings.  For each $q=1,\ldots,N$,
fix a point $x_q\in X$ and 
$
z_q^0=t_q^0H_q\in A_q(x_q),
$
and put
$
t^0=(t_1^0,\ldots,t_N^0).
$
Let $C_1,\ldots,C_N\subseteq X$ be Borel sets such that
\[
a:=\Lambda_{t^0}(C_1\times\cdots\times C_N)>0.
\]
Then, for every  F{\o}lner sequence
$
\Fcal=(F_n)_{n\ge1},
$
every choice of neighborhoods $U_q\ni x_q$, and every
$0<\theta<a$, there exist points $y_q\in U_q$,
$q\in[N]$, such that
\[
\limsup_{n\to\infty}
\frac{
\left|
\left\{
g\in F_n:
gy_q\in C_q
\text{ for every }q=1,\ldots,N
\right\}
\right|
}{|F_n|}
>
\theta.
\]
\end{theorem}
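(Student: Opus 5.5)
Following the introduction's outline, I will produce a single phase parameter $t$ for which $\Lambda_t$ is ergodic and gives positive mass both to the source box $U_1\times\cdots\times U_N$ and to the target rectangle. A generic point of $\Lambda_t$ inside the source box will then supply the tuple $(y_1,\ldots,y_N)$.

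\emph{Persistence of the target.} By Lemma~\ref{lem:rectangle-continuity}, $\Phi_C(t)=\Lambda_t(C_1\times\cdots\times C_N)$ is continuous on $P$. Since $\Phi_C(t^0)=a>\theta$, there is an open neighborhood $W\ni t^0$ in $P$ with $\Phi_C>\theta$ on $W$. I may shrink the $U_q$ to open sets, since any tuple found in the smaller sets still lies in the original ones.

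\emph{Positive mass on the source box.} Put $U:=U_1\times\cdots\times U_N$. I want to show that $\{t\in W:\Lambda_t(U)>0\}$ has positive $m_P$-measure. Since $z_q^0\in A_q(x_q)$ and $\Gamma_q=\supp\rho_q$, we have $\rho_q(U_q\times O_q)>0$ for every open $O_q\ni z_q^0$. Equivalently, $\int_{O_q}\mu_{q,z}(U_q)\,dm_q(z)>0$, so the Borel function $\varphi_q(k):=\mu_{q,kH_q}(U_q)$ is positive on a positive-Haar-measure subset of $\tau_q^{-1}(O_q)$, that is, of $K_q$ near $t_q^0H_q$.

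Now I use the formula $\Lambda_t(U)=\int_L\prod_q\varphi_q(\ell_qt_q)\,dm_L(\ell)$. Choose a symmetric neighborhood $V$ of $e_P$ with $V\,t^0\subseteq W$ (working in product form), and average over $t\in V t^0$. By Fubini and invariance of Haar measure, $\int_{Vt^0}\Lambda_t(U)\,dm_P(t)$ equals an $L$-average of $\int_{\ell Vt^0}\prod_q\varphi_q(s_q)\,dm_P(s)$. For $\ell$ in the small neighborhood $L\cap V$ of the identity, the product set $\ell Vt^0$ contains a product neighborhood of $t^0$. Choosing the $O_q$ inside it, the integral is at least $\prod_q\int_{\tau_q^{-1}(O_q)'}\varphi_q\,dm_{K_q}>0$, where each factor is positive by the previous paragraph. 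Since $m_L(L\cap V)>0$, the average over $Vt^0$ is positive, and hence $\Lambda_t(U)>0$ on a positive-measure subset of $Vt^0\subseteq W$. This step is the main obstacle: one has to organize the neighborhoods so that the coordinatewise product structure of $P$ and the translation by $L$ are compatible, and keep track of which side the translations act on.

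\emph{Choosing $t$ and concluding.} Proposition~\ref{prop:mixed-phase-decomposition} says $\Lambda_t$ is ergodic for $m_P$-a.e.\ $t$, so I can pick $t\in W$ with $\Lambda_t$ ergodic, $\Lambda_t(U)>0$ and $\Lambda_t(C)>\theta$, where $C:=C_1\times\cdots\times C_N$. Apply the mean ergodic theorem (Lemma~\ref{lem:mean-ergodic}) to $f=1_C$ on $(\mathbf X,\Lambda_t)$: $A^{\Fcal}_n1_C\to\Lambda_t(C)$ in $L^2(\Lambda_t)$. Passing to a subsequence, the convergence holds $\Lambda_t$-a.e. Hence $\limsup_n A_n^{\Fcal}1_C(\mathbf y)\ge\Lambda_t(C)>\theta$ for $\Lambda_t$-a.e.\ $\mathbf y$, and in particular for some $\mathbf y=(y_1,\ldots,y_N)\in U$, because $U$ has positive $\Lambda_t$-measure. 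The quantity $A_n^{\Fcal}1_C(\mathbf y)$ is exactly the proportion of $g\in F_n$ with $gy_q\in C_q$ for every $q$, which gives the required strict inequality.
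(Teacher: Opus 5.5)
Your argument follows the paper's proof in all essentials: persistence of the target from Lemma~\ref{lem:rectangle-continuity}, positivity of $\int_{O_q}\mu_{q,z}(U_q)\,dm_q(z)$ from $(x_q,z_q^0)\in\Gamma_q$, Haar averaging with Fubini to get a positive-measure set of phases with $\Lambda_t(U_1\times\cdots\times U_N)>0$, intersection with the conull set of ergodic phases (Proposition~\ref{prop:mixed-phase-decomposition}), and a generic point from the mean ergodic theorem along a subsequence. Your Fubini identity $\int_{Vt^0}\Lambda_t(U)\,dm_P(t)=\int_L\int_{\ell Vt^0}\prod_q\varphi_q(s_q)\,dm_P(s)\,dm_L(\ell)$ is correct. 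You also do not need uniformity in $\ell$: it is enough that the inner integral is strictly positive for each $\ell$ in the nonempty open set $L\cap V$.

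The one step that does not yet close is the one you flagged. A product neighborhood $\prod_q N_q$ of $t^0$ inside $\ell Vt^0$ has factors $N_q\subseteq K_q$ that are in general not right $H_q$-saturated. Your previous paragraph only gives $\int_{N_qH_q}\varphi_q\,dm_{K_q}=\int_{\tau_q(N_q)}\mu_{q,z}(U_q)\,dm_q(z)>0$. So the claim that ``each factor is positive by the previous paragraph'' needs an extra argument; the primed set $\tau_q^{-1}(O_q)'$ hides exactly this.

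The cleanest repair is the paper's. Since $\Phi_C$ is right $H$-invariant, with $H=H_1\times\cdots\times H_N$, you may take $W=\{t:\Phi_C(t)>\theta\}$, which is $H$-saturated, and average over $Vt^0H\subseteq W$ instead of $Vt^0$. For each $\ell\in L\cap V$, the set $\ell Vt^0H$ is open, right $H$-saturated, and contains $t^0$. It therefore contains a product $\prod_q N_qH_q=\prod_q\tau_q^{-1}\bigl(\tau_q(N_q)\bigr)$, where your previous paragraph applies directly. The paper integrates over $\widetilde W_1\times\cdots\times\widetilde W_N$ with $\widetilde W_q=\tau_q^{-1}(W_q)$ and gets positivity near $e\in L$ from continuity of $a\mapsto\int_{\widetilde W_q}\varphi_q(au)\,dm_{K_q}(u)$, which is positive at $a=e$. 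Your containment-and-monotonicity argument does the same job without needing $L^1$-continuity of translations.

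Alternatively, you can keep the non-saturated $N_q$ and use right $H_q$-invariance of $\varphi_q$. Choose a neighborhood $N_0$ of $e$ with $N_0^2t_q^0\subseteq N_q$, and cover the compact coset $t_q^0H_q$ by finitely many sets $N_0t_q^0h_i$. This gives $N_0t_q^0H_q\subseteq\bigcup_{i=1}^kN_qh_i$, and hence, by right invariance of Haar measure, $0<\int_{N_0t_q^0H_q}\varphi_q\,dm_{K_q}\le k\int_{N_q}\varphi_q\,dm_{K_q}$. With either fix, the rest of your proof is correct.
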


\begin{proof}
Fix a  F{\o}lner sequence
$
\Fcal=(F_n)_{n\ge1},
$
neighborhoods $U_q\ni x_q$, and a number $0<\theta<a$.
By Lemma~\ref{lem:rectangle-continuity}, the function
$
(t_1,\ldots,t_N)
\mapsto
\Lambda_t(C_1\times\cdots\times C_N)
$
is a continuous function on
$Z_1\times\cdots\times Z_N$.  Hence there are open neighborhoods
$
W_q\subseteq Z_q$
 of 
$z_q^0=t_q^0H_q,$
such that
\begin{equation}\label{eq:local-realization-target-stability}
\Lambda_t(C_1\times\cdots\times C_N)>\theta
\end{equation}
whenever
$
t_qH_q\in W_q, $ $q\in[N].$

Since $z_q^0\in A_q(x_q)$, one has
$
(x_q,z_q^0)\in\Gamma_q.
$
As $U_q\times W_q$ is an open neighborhood of
$(x_q,z_q^0)$, the definition of $\Gamma_q$ gives
\begin{equation}\label{eq:local-realization-source-mass}
\mu_q\bigl(U_q\cap\kappa_q^{-1}(W_q)\bigr)>0.
\end{equation}

Put
\[
\widetilde W_q:=\tau_q^{-1}(W_q)\subseteq K_q
\]
and define
\[
\psi_q(k):=\mu_{q,kH_q}(U_q),
\qquad
k\in K_q.
\]
For $a_q\in K_q$, set
\[
J_q(a_q)
:=
\int_{\widetilde W_q}
\psi_q(a_qu)\,dm_{K_q}(u).
\]
By the strong continuity of left translations on $L^1(K_q)$, the
function $J_q$ is continuous.  Indeed,
\[
|J_q(a_q)-J_q(a_q')|
\le
\|L_{a_q}\psi_q-L_{a_q'}\psi_q\|_{L^1(K_q)},
\]
where
$
L_a\psi(u):=\psi(au).
$
Moreover, using $m_q=(\tau_q)_*m_{K_q}$ and
\eqref{eq:local-realization-source-mass}, we obtain
\begin{equation*}
J_q(e)
=
\int_{\widetilde W_q}
\psi_q(u)\,dm_{K_q}(u)
=
\int_{W_q}
\mu_{q,z}(U_q)\,dm_q(z)
=
\mu_q\bigl(U_q\cap\kappa_q^{-1}(W_q)\bigr)
>0,
\end{equation*}
which together with the continuity of each $J_q$ and Tonelli's theorem implies that
\begin{align*}
\int_{\widetilde W_1\times\cdots\times\widetilde W_N}
\Lambda_t(U_1\times\cdots\times U_N)\,dm_P(t)
&=
\int_L
\prod_{q=1}^N
\left(
\int_{\widetilde W_q}
\psi_q(\ell_qt_q)\,dm_{K_q}(t_q)
\right)
dm_L(\ell)
\\
&=
\int_L
\prod_{q=1}^N J_q(\ell_q)\,dm_L(\ell)
>0.
\end{align*}

Therefore the Borel set
\[
B_{\mathbf U}
:=
\left\{
t\in
\widetilde W_1\times\cdots\times\widetilde W_N:
\Lambda_t(U_1\times\cdots\times U_N)>0
\right\}
\]
has positive $m_P$-measure.
By Proposition~\ref{prop:mixed-phase-decomposition}, the set
\[
E
:=
\{t\in P:\Lambda_t\text{ is ergodic}\}
\]
 is a Borel
conull subset of $P$.  Thus,
we may choose
$
t\in B_{\mathbf U}\cap E,
$
and put
$
\Lambda:=\Lambda_t.
$
Then $\Lambda$ is ergodic and
\begin{equation}\label{eq:local-realization-source-positive}
\Lambda(U_1\times\cdots\times U_N)>0.
\end{equation}
Since $t_q\in\widetilde W_q$, we have
$
t_qH_q\in W_q$, 
$q=1,\ldots,N$,
and hence  by \eqref{eq:local-realization-target-stability}, 
\begin{equation*}
\Lambda(C_1\times\cdots\times C_N)>\theta.
\end{equation*}

Apply Lemma~\ref{lem:mean-ergodic} to the ergodic system
$(X^N,\Lambda,G)$ and the function
$
\mathbf 1_{C_1\times\cdots\times C_N}.
$
The corresponding averages converge in $L^2(\Lambda)$ to the constant
$
\Lambda(C_1\times\cdots\times C_N).
$
After passing to a subsequence $(F_{n_j})_{j\ge1}$, the convergence
holds pointwise on a $\Lambda$-conull set.  By \eqref{eq:local-realization-source-positive},
we may choose
$
\mathbf y=(y_1,\ldots,y_N)
\in U_1\times\cdots\times U_N
$
in this convergence set.  Then
\[
\lim_{j\to\infty}
\frac{
\left|
\left\{
g\in F_{n_j}:
gy_q\in C_q
\text{ for every }q=1,\ldots,N
\right\}
\right|
}{|F_{n_j}|}
=
\Lambda(C_1\times\cdots\times C_N)
>
\theta.
\]
Taking the limsup along the original F{\o}lner sequence proves the claim.
\end{proof}

We have stated the preceding theorem in this general form because it may
be of independent interest.  For the remainder of the paper, however, we
only need the following consequence.
\begin{corollary}
\label{cor:phase-localization}
Fix $x\in X$ and  
$
z_q^0=t_q^0H_q\in A_q(x),$ $
q\in[N],
$
and put
$
t^0=(t_1^0,\ldots,t_N^0).
$
Assume that
\[
\Lambda_{t^0}
\bigl(X^N\setminus\Delta_{\mathrm{fat}}^N(X)\bigr)>0.
\]
Then there exists $\varepsilon_0>0$ such that,  for
every  F{\o}lner sequence $\Fcal$, $x$ is an $\Fcal$-mean $N$-sensitive point.
\end{corollary}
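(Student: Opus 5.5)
The plan is to apply Theorem~\ref{thm:mixed-phase-local-realization} with all source points equal to $x$, that is $x_1=\cdots=x_N=x$, and with target sets chosen as pairwise separated open sets.

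The first step is to find such targets carrying positive mass. Since $X^N\setminus\Delta^N_{\mathrm{fat}}(X)=\bigcup_{\eta>0}\{\mathbf w: D_N(\mathbf w)>\eta\}$ is an increasing union, continuity from below gives some $\eta>0$ with
\[
\Lambda_{t^0}\bigl(\{\mathbf w\in X^N: D_N(\mathbf w)>\eta\}\bigr)>0.
\]
Cover this open set by countably many products $V_1\times\cdots\times V_N$ of open balls of radius less than $\eta/4$ centred at points of a countable dense set. Each such product that meets the set satisfies $d(V_i,V_j)\ge\eta/2$ for $i\ne j$. Countable subadditivity then yields one product $C:=V_1\times\cdots\times V_N$ with
\[
a:=\Lambda_{t^0}(C)>0 \qquad\text{and}\qquad D_N\ge\eta/2 \text{ on } C.
\]

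Now put $\theta:=a/2$ and $\varepsilon_0:=\theta\eta/2$. Note that $\varepsilon_0$ depends only on $t^0$ and $x$, and not on $\Fcal$ or on the neighborhood. Fix a F{\o}lner sequence $\Fcal$ and a neighborhood $U$ of $x$, and take $U_q:=U$ for every $q$. Theorem~\ref{thm:mixed-phase-local-realization} produces $y_1,\ldots,y_N\in U$ such that the set of $g\in F_n$ with $(gy_1,\ldots,gy_N)\in C$ has upper density greater than $\theta$. For these $g$ we have $D_N(gy_1,\ldots,gy_N)\ge\eta/2$, and $D_N\ge0$ elsewhere. Therefore
\[
D_{N,\Fcal}(y_1,\ldots,y_N)\ge\frac{\eta}{2}\cdot\theta=\varepsilon_0,
\]
which is exactly $\Fcal$-mean $N$-sensitivity of $x$ with the constant $\varepsilon_0$.

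I expect the only delicate point to be the choice of targets: they must be uniformly separated, so that $D_N$ is bounded below on the whole rectangle, and they must carry positive $\Lambda_{t^0}$-mass. The countable-cover argument above handles both. All the phase-theoretic work is already contained in the local realization theorem.
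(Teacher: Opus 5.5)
Your proof is correct and follows the paper's approach: apply Theorem~\ref{thm:mixed-phase-local-realization} with all source points equal to $x$, a uniformly separated target rectangle of positive $\Lambda_{t^0}$-mass, and $\theta=a/2$. The only difference is how you find the rectangle (continuity from below plus a countable cover, where the paper takes a point of $\supp(\Lambda_{t^0})$ off the fat diagonal and neighbourhoods with disjoint closures). One small arithmetic fix: balls of radius $r<\eta/4$ only give $d(V_i,V_j)\ge\eta-4r>0$, so take $r\le\eta/8$ to obtain the claimed separation $\eta/2$.
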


\begin{proof}
Since $X^N\setminus\Delta_{\mathrm{fat}}^N(X)$ is open and has
positive $\Lambda_{t^0}$-measure, choose
\[
\mathbf w=(w_1,\ldots,w_N)
\in
\supp(\Lambda_{t^0})
\setminus
\Delta_{\mathrm{fat}}^N(X).
\]
Choose open neighborhoods $O_q\ni w_q$ with pairwise disjoint closures,
and put
\[
a
:=
\Lambda_{t^0}(O_1\times\cdots\times O_N),
\qquad
c
:=
\min_{1\le p<q\le N}
\dist(\overline O_p,\overline O_q).
\]
Then $a>0$ and $c>0$.  Set
\[
\varepsilon_0:=\frac{ac}{2}>0.
\]

Fix a F{\o}lner sequence $\Fcal=(F_n)_{n\ge1}$ and a neighborhood
$U$ of $x$.  Apply
Theorem~\ref{thm:mixed-phase-local-realization} with all source points
equal to $x$, all source neighborhoods equal to $U$, target sets
$C_q=O_q$, and $\theta=a/2$.  We obtain
$y_1,\ldots,y_N\in U$ such that
\[
\limsup_{n\to\infty}
\frac{
\left|
\left\{
g\in F_n:
gy_q\in O_q
\text{ for every }q=1,\ldots,N
\right\}
\right|
}{|F_n|}
>
\frac a2.
\]
Whenever $gy_q\in O_q$ for every $q$, the definition of $c$ gives
\[
D_N(gy_1,\ldots,gy_N)\ge c.
\]
Therefore
\begin{align*}
D_{N,\Fcal}(y_1,\ldots,y_N)
&=
\limsup_{n\to\infty}
\frac1{|F_n|}
\sum_{g\in F_n}
D_N(gy_1,\ldots,gy_N)
\\
&\ge
c\,
\limsup_{n\to\infty}
\frac{
\left|
\left\{
g\in F_n:
gy_q\in O_q
\text{ for every }q
\right\}
\right|
}{|F_n|}
\\
&>
\frac{ac}{2}
=
\varepsilon_0.
\end{align*}
 Hence $x$ is $\Fcal$-mean $N$-sensitive  for every F{\o}lner sequence
$\Fcal$.
\end{proof}
\subsection{Multiplicity obstructions}
\label{subsec:multiplicity-obstructions}
We now use Corollary~\ref{cor:phase-localization} to rule out two sources
of $r$-fold separation.  The first consists of $r$ distinct
measure--phase labels compatible with the same point of $X$.  The second
consists of $r$ residual support points over one fixed compact phase.
In either case, the associated mixed phase joining gives positive mass to
the complement of the fat diagonal, and hence produces an
$\Fcal$-mean $r$-sensitive point.

For $x\in X$, put
\[
\mathcal A(x)
:=
\bigsqcup_{\mu\in\M_G^e(X)}
\{\mu\}\times A_\mu(x).
\]
\begin{lemma}
\label{lem:label-budget}
Assume that $(X,G)$ is
$\Fcal$-mean $r$-equicontinuous for some $r\ge2$.
Then for every $x\in X$,
\[
 |\mathcal A(x)|\le r-1.
\]
In particular, $(X,G)$ has at most $r-1$ ergodic invariant measures.
\end{lemma}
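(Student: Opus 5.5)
We argue by contradiction. Suppose some $x\in X$ admits $r$ distinct labels $(\mu_1,z_1^0),\ldots,(\mu_r,z_r^0)\in\mathcal A(x)$; the measures $\mu_q$ may repeat, but the pairs are distinct. For each $q$ choose a compact model $\kappa_q:(X,\mu_q,G)\to(Z_q,m_q,G)$, taking literally the same model whenever $\mu_q=\mu_{q'}$. Write $z_q^0=t_q^0H_q$ and set $t^0=(t_1^0,\ldots,t_r^0)$. Since $z_q^0\in A_q(x)$, Corollary~\ref{cor:phase-localization} with $N=r$ would make $x$ an $\Fcal$-mean $r$-sensitive point once we know
\[
\Lambda_{t^0}\bigl(X^r\setminus\Delta^r_{\mathrm{fat}}(X)\bigr)>0 .
\]
This contradicts Lemma~\ref{lem:no-sensitive-point}. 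So the whole proof reduces to a single mass estimate for the mixed joining. The final assertion then follows by taking, for $r$ distinct ergodic measures, one label from each of the nonempty sets $A_{\mu_q}(x)$.

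To prove the estimate we show $\Lambda_{t^0}(\{x_p=x_q\})=0$ for every pair $p<q$. Project $\Lambda_{t^0}$ to coordinates $(p,q)$. This gives the joining $\mathcal T_{pq}(\lambda^{pq})$ of $\mu_p$ and $\mu_q$, where $\lambda^{pq}$ is the $(p,q)$-marginal of $\lambda_{t^0}$, namely the Haar measure on the orbit closure of $(z_p^0,z_q^0)$ under $L_{pq}=\overline{(\alpha_p,\alpha_q)(G)}$. There are two cases.

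\emph{Case $\mu_p\neq\mu_q$.} The two measures are mutually singular: choose disjoint invariant Borel sets $E_p,E_q$ of full measure. Every joining gives full mass to $E_p\times E_q$, which misses the diagonal. So the diagonal is null.

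\emph{Case $\mu_p=\mu_q=\mu$, $z_p^0\neq z_q^0$.} Both coordinates now use the same $\kappa$ and the same compact model $K/H$. The diagonal mass is
\[
\int \mu_{z}\otimes\mu_{z'}(\Delta)\,d\lambda^{pq}(z,z').
\]
The conditional measures $\mu_z$ and $\mu_{z'}$ live on the disjoint fibres $\kappa^{-1}(z)$ and $\kappa^{-1}(z')$ whenever $z\neq z'$. It therefore suffices that $\lambda^{pq}$ gives zero mass to the diagonal of $Z\times Z$. Since $L_{pq}$ sits inside the diagonal subgroup $\{(k,k)\}$ of $K\times K$, every pair in the support of $\lambda^{pq}$ has the form $(kz_p^0,kz_q^0)$. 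Such a pair lies on the diagonal of $Z\times Z$ only if $kz_p^0=kz_q^0$, which forces $z_p^0=z_q^0$. So the diagonal of $Z\times Z$ is disjoint from the support, and the diagonal mass vanishes.

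Summing over the finitely many pairs gives full mass off the fat diagonal, in particular positive mass. The main obstacle is the equal-measure case. One must fix a common model so that the phase group acts diagonally and distinct phases stay distinct along the whole orbit. One must also make sure the kernel identity $\mu_z(\kappa^{-1}(z))=1$ is used only on conull sets, which is legitimate because each marginal of $\lambda^{pq}$ is $m$.
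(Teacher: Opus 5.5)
Your proof is correct and follows the paper's argument. You reduce via Corollary~\ref{cor:phase-localization} to showing that $\Lambda_{t^0}$ charges the complement of the fat diagonal, separate coordinates carrying distinct measures by disjoint full-measure sets, and separate coordinates carrying the same measure by the diagonal phase group, which keeps distinct phases on distinct $\kappa$-fibres. The only difference is cosmetic: you work pairwise with two-dimensional marginals (using the joining property directly when the measures differ), whereas the paper uses the full product conditional measure $\Xi_\ell$ for $m_L$-a.e.\ $\ell$.
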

\begin{proof}
Suppose, for a contradiction, that $x\in X$ carries $r$ distinct
elements
\[
(\mu_1,z_1),\ldots,(\mu_r,z_r)
\in\mathcal A(x).
\]
Choose lifts
$
z_q=t_qH_q,$ $q=1,\ldots,r,
$
and put
$
t:=(t_1,\ldots,t_r).
$

Distinct ergodic invariant measures are mutually singular.  Hence, for
each distinct measure $\mu$ occurring in the list, we may choose a
$G$-invariant Borel set $E_\mu\subseteq X$ such that
$
\mu(E_\mu)=1
$
and such that the sets $E_\mu$ are pairwise disjoint.  Let $Z_\mu^\sharp\subseteq Z_\mu$ be a conull Borel set on which
\[
\mu_{\mu,z}(E_\mu)=1
\qquad\text{and}\qquad
\mu_{\mu,z}\bigl(\kappa_\mu^{-1}(\{z\})\bigr)=1.
\]

Since the $q$-th marginal of $\lambda_t$ is $m_{\mu_q}$, we have
\[
\lambda_t
\bigl(
Z_{\mu_1}^\sharp\times\cdots\times Z_{\mu_r}^\sharp
\bigr)
=
1.
\]
By the definition of $\lambda_t$, it follows that, for
$m_L$-a.e.
$\ell=(\ell_1,\ldots,\ell_r)\in L$,
\[
\ell_qt_qH_{\mu_q}\in Z_{\mu_q}^\sharp
\qquad
q\in[r].
\]
Fix such an $\ell$, and consider the product conditional measure
$
\Xi_\ell
:=
\bigotimes_{q=1}^r
\mu_{q,\ell_qt_qH_q}.
$
We claim that
\[
\Xi_\ell
\bigl(
X^r\setminus\Delta_{\mathrm{fat}}^r(X)
\bigr)
=
1.
\]
Indeed, if $\mu_p\ne\mu_q$, then the $p$-th and $q$-th coordinates are
concentrated on the disjoint sets $E_{\mu_p}$ and $E_{\mu_q}$,
respectively, and hence cannot coincide.

Suppose that $\mu_p=\mu_q=\mu$.  Since the labels
$(\mu_p,z_p)$ and $(\mu_q,z_q)$ are distinct, one has
$z_p\ne z_q$.  The $p$-th and $q$-th coordinate homomorphisms in
the joint phase construction are both equal to
$\alpha_\mu:G\to K_\mu$. Hence the group $L$ defined in \eqref{eq:phase-groups} satisfies
\[
L
=
\overline{
\left\{
\bigl(
\alpha_{\mu_1}(g),\ldots,\alpha_{\mu_r}(g)
\bigr):
g\in G
\right\}}
\subseteq
\{\ell:\ell_p=\ell_q\}.
\]
Therefore,
\[
\ell_p z_p\ne\ell_q z_q.
\]
The corresponding conditional measures are concentrated on two distinct
fibers of $\kappa_\mu$, so the $p$-th and $q$-th coordinates cannot
coincide.

Thus
\[
\Xi_\ell\bigl(X^r\setminus \Delta_{\mathrm{fat}}^r(X)\bigr)=1
\qquad\text{for $m_L$-a.e. $\ell$.}
\]
  Integrating over $L$, we obtain
\[
\Lambda_t\bigl(X^r\setminus \Delta_{\mathrm{fat}}^r(X)\bigr)=1.
\]
Since $z_q\in A_{\mu_q}(x)$, $q=1,\ldots,r$,
Corollary~\ref{cor:phase-localization} implies that $x$ is an
$\Fcal$-mean $r$-sensitive point.  This contradicts
Lemma~\ref{lem:no-sensitive-point}.
\end{proof}

\begin{lemma}
\label{lem:residual-multiplicity}
Assume that $(X,G)$ is
$\Fcal$-mean $r$-equicontinuous for some $r\ge2$. Then for every  $\mu\in\mc M_G^e(X)$, there is an integer
$
 b_\mu\in[r-1]
$
such that
\[
|\supp(\mu_{\mu,z})|=b_\mu
 \qquad
 \text{for $m_\mu$-a.e. }z\in Z_\mu.
\]
\end{lemma}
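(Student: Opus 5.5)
By Proposition~\ref{prop:residual-sequence-entropy}, there is already a constant $b_\mu\in\N\cup\{\infty\}$ with $|\supp(\mu_{\mu,z})|=b_\mu$ for $m_\mu$-a.e.\ $z$. It therefore suffices to show that $b_\mu\ge r$ (including $b_\mu=\infty$) leads to a contradiction. The plan is to apply the machinery of Section~\ref{subsec:compact-phase-orbits} with $N=r$ and $\mu_1=\cdots=\mu_r=\mu$, all with the same compact model $\kappa_\mu:(X,\mu,G)\to(Z_\mu,m_\mu,G)$, $Z_\mu=K_\mu/H_\mu$. We then choose \emph{equal} phases $t^0=(t_0,\ldots,t_0)$ with $t_0H_\mu\in A_\mu(x)$ for a fixed $x\in X$; such a $t_0$ exists because $A_\mu(x)\neq\emptyset$. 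Since every coordinate homomorphism equals $\alpha_\mu$, the group $L$ lies in the diagonal $\{(k,\ldots,k)\}$, and in fact equals it. Hence
\[
\Lambda_{t^0}=\int_{K_\mu}\mu_{\mu,kt_0H_\mu}^{\otimes r}\,dm_{K_\mu}(k)=\int_{Z_\mu}\mu_{\mu,z}^{\otimes r}\,dm_\mu(z),
\]
which is the $r$-fold relatively independent self-joining over the maximal compact factor. Here we use right invariance of Haar measure to absorb $t_0$.

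The key step is to show $\Lambda_{t^0}(X^r\setminus\Delta^r_{\mathrm{fat}}(X))>0$. Fix $z$ in the conull set where $|\supp(\mu_{\mu,z})|\ge r$, and pick $r$ distinct support points $w_1,\ldots,w_r$ of $\mu_{\mu,z}$ together with neighborhoods $O_i\ni w_i$ having pairwise disjoint closures. Each $\mu_{\mu,z}(O_i)$ is positive, so
\[
\mu_{\mu,z}^{\otimes r}(O_1\times\cdots\times O_r)=\prod_i\mu_{\mu,z}(O_i)>0,
\]
and this rectangle avoids the fat diagonal. Integrating over a conull set of $z$ then gives $\Lambda_{t^0}(X^r\setminus\Delta^r_{\mathrm{fat}}(X))>0$. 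Note that we do not even need atoms: if $\mu_{\mu,z}$ is non-atomic, the diagonal pieces $\{x_i=x_j\}$ have product measure zero, so the complement of the fat diagonal already has full mass.

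With this positivity in hand, Corollary~\ref{cor:phase-localization}, applied with all $z_q^0=t_0H_\mu\in A_\mu(x)$, shows that $x$ is an $\Fcal$-mean $r$-sensitive point. This contradicts Lemma~\ref{lem:no-sensitive-point}. Hence $b_\mu\le r-1$, and since $b_\mu\ge1$ trivially, $b_\mu\in[r-1]$.

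The main point requiring care is identifying $\Lambda_{t^0}$ with the relatively independent joining. This needs $L$ to be exactly the diagonal copy of $K_\mu$, which holds because $\alpha_\mu(G)$ is dense in $K_\mu$, so that $m_L$ projects to $m_{K_\mu}$ on the diagonal. It also needs the measurable selection of $r$ separated support points to be handled uniformly enough to integrate. The second issue can be avoided by arguing by contradiction pointwise: if $\Lambda_{t^0}$ gave the complement of the fat diagonal zero mass, then for a.e.\ $z$ the product $\mu_{\mu,z}^{\otimes r}$ would vanish on every such disjoint rectangle. That forces fewer than $r$ support points for a.e.\ $z$, contradicting $b_\mu\ge r$.
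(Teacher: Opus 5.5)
Your proposal is correct and is essentially the paper's argument. You take $r$ copies of $\mu$ with a common phase $t_0H_\mu\in A_\mu(x)$, so the phase group is diagonal and $\Lambda_{(t_0,\ldots,t_0)}=\int_{Z_\mu}\mu_{\mu,z}^{\otimes r}\,dm_\mu(z)$; you get positivity off the fat diagonal from $r$ disjoint neighborhoods of support points, and then play Corollary~\ref{cor:phase-localization} against Lemma~\ref{lem:no-sensitive-point}. The differences are cosmetic: the paper derives a.e.\ constancy of $|\supp(\mu_{\mu,z})|$ directly from invariance and ergodicity rather than citing Proposition~\ref{prop:residual-sequence-entropy}, and no measurable selection is needed, since $z\mapsto\mu_{\mu,z}^{\otimes r}(X^r\setminus\Delta_{\mathrm{fat}}^r(X))$ is itself measurable and positive a.e.
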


\begin{proof}
Since the support-cardinality function is measurable and it is
invariant under the ergodic action on $Z_\mu$, it follows that it is almost everywhere
constant, with value in $\N\cup\{\infty\}$.

Suppose that this value is at least $r$.  Fix $x\in X$ and 
$z^0=t^0H_\mu\in A_\mu(x)$.  Use $r$ copies of the same measure and the
phase $(t^0,\ldots,t^0)$.  In this repeated model the phase subgroup is
diagonal, so all compact-factor base points coincide, and
\[
 \Lambda_{(t^0,\ldots,t^0)}(X^r\setminus\Delta_{\mr{fat}}^r(X))
 =
 \int_{Z_\mu}
 \mu_{\mu,z}^{\otimes r}(X^r\setminus\Delta_{\mr{fat}}^r(X))
 \,dm_\mu(z)>0.
\]
Indeed, a probability measure whose support has at least $r$ points admits
$r$ pairwise disjoint open sets of positive measure, and its $r$-fold product
gives positive mass to their product.  Corollary~\ref{cor:phase-localization} now implies that $x$ is
$\Fcal$-mean $r$-sensitive, contradicting
Lemma~\ref{lem:no-sensitive-point}.  Hence
$
b_\mu\le r-1.
$
Since every probability measure has nonempty support,
$b_\mu\ge1$.
\end{proof}

\section{The exact degree formula and proof of the main theorem}
\label{sec:main-proof}

This section completes the proofs of Theorem \ref{thm:main} and Theorem \ref{thm:intro-degree}.  We first prove
that an upper bound on the conditional topomorphic degree implies the
corresponding Weyl mean equicontinuity.  For the converse, assuming
$\Fcal$-mean $r$-equicontinuity, we combine the compact-fiber
identification from the preceding section with the residual multiplicity
bound obtained from mixed-phase localization.  This yields the exact
formula
$
\tdeg_G(X)
=
\sum_{i=1}^{\ell}\iota_i b_i.
$
We then apply the localization theorem to the full multiplicity packet to
show that
$
\sum_{i=1}^{\ell}\iota_i b_i\le r-1
$, which completes the proof of Theorem \ref{thm:main}.

\subsection{Finite degree implies Weyl mean equicontinuity}

We begin with the direction that extends
\cite[Theorem~1.1]{BreitenbuecherHauptJaeger2026} to countably infinite
discrete amenable group actions.

\begin{theorem}
\label{thm:forward}
Let $(X,G)$ be a minimal $G$-system, and let $m\in\mb N$.  If
$
\tdeg_G(X)\le m,
$
then $(X,G)$ is Weyl mean $(m+1)$-equicontinuous.  
\end{theorem}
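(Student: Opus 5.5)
Assume $\tdeg_G(X)\le m$; we prove Weyl mean $(m+1)$-equicontinuity by contradiction, arguing through empirical measures of tuples. Suppose there are $\varepsilon>0$, tuples $\mathbf x^{(k)}=(x^{(k)}_1,\ldots,x^{(k)}_{m+1})$ with $\max_{i<j}d(x^{(k)}_i,x^{(k)}_j)\to0$, and F{\o}lner sequences $\Fcal^{(k)}$ with $D_{m+1,\Fcal^{(k)}}(\mathbf x^{(k)})>\varepsilon$. For each $k$ we pick a set $F^{(k)}$ from $\Fcal^{(k)}$ with $|hF^{(k)}\triangle F^{(k)}|/|F^{(k)}|<1/k$ for $h$ in the $k$-th element of an exhausting sequence of finite subsets of $G$, and with average of $D_{m+1}(g\mathbf x^{(k)})$ over $F^{(k)}$ exceeding $\varepsilon$. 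A diagonal F{\o}lner argument as in Lemma~\ref{lem:uniform-lower-bound} then shows that a weak* limit $\Theta$ of the empirical measures $|F^{(k)}|^{-1}\sum_{g\in F^{(k)}}\delta_{g\mathbf x^{(k)}}$ on $X^{m+1}$ is $G$-invariant. Since $D_{m+1}$ is continuous, $\int D_{m+1}\,d\Theta\ge\varepsilon$, so $\Theta(X^{m+1}\setminus\Delta^{m+1}_{\mathrm{fat}}(X))>0$.

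Next we locate $\Theta$ over the equicontinuous factor. Fix a $G$-invariant compatible metric on $\Xeq$. Then $d(\pi x_i^{(k)},\pi g x_j^{(k)})$ behaves as follows: $\pi(gx^{(k)}_i)$ and $\pi(gx^{(k)}_j)$ stay within the $\Xeq$-distance of $\pi x^{(k)}_i,\pi x^{(k)}_j$, which tends to $0$ uniformly in $g$ by uniform continuity of $\pi$. Hence $\Theta$ is supported on the relation $R^{m+1}_\pi=\{\mathbf w:\pi(w_1)=\cdots=\pi(w_{m+1})\}$. Each marginal $\Theta_i$ is an invariant measure on $X$, and the average $\bar\mu:=\frac1{m+1}\sum_i\Theta_i\in\M_G(X)$. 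Disintegrating $\Theta$ over the common factor map $\mathbf w\mapsto\pi(w_1)$, whose pushforward is $\nu$ by unique ergodicity, gives conditional measures $\Theta_y$ on $(\pi^{-1}(y))^{m+1}$ whose $i$-th marginals are $(\Theta_i)_y$ for $\nu$-a.e.\ $y$.

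The key step is the counting. For $\nu$-a.e.\ $y$, each $(\Theta_i)_y$ is absolutely continuous with respect to $(\bar\mu)_y$, whose support has at most $m$ points by Definition~\ref{def:topomorphic-degree}. So $\Theta_y$ is concentrated on $S_y^{m+1}$ with $|S_y|\le m$. By pigeonhole, every tuple in $S_y^{m+1}$ has two equal coordinates, hence $\Theta_y(X^{m+1}\setminus\Delta_{\mathrm{fat}}^{m+1})=0$. Integrating gives $\Theta(X^{m+1}\setminus\Delta_{\mathrm{fat}}^{m+1})=0$, which contradicts the first paragraph.

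The step I expect to need most care is the measure-theoretic bookkeeping. First, the marginal conditionals of $\Theta$ over $\pi$ must agree a.e.\ with those of $\Theta_i$; this uses uniqueness of disintegration over the factor. Second, absolute continuity $(\Theta_i)_y\ll(\bar\mu)_y$ must hold a.e.; this comes from $\Theta_i\le(m+1)\bar\mu$ and passes to conditionals. A secondary point is the diagonal extraction producing an invariant limit from varying F{\o}lner sequences, and this is routine. Together these settle Theorem~\ref{thm:forward} without invoking the phase machinery of Section~\ref{sec:compact-phase-decompositions}.
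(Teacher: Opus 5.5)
Your argument is correct, and it takes a genuinely different route from the paper.

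\textbf{The paper's route.} The paper argues directly and quantitatively. It uses Proposition~\ref{prop:finite-conditional-degree} to obtain Borel branches $\gamma_1,\ldots,\gamma_d$ over $\Xeq$, and Lusin's theorem to make them continuous on a compact set $K$ with $\nu(K)>1-\eta$. Every invariant measure then gives mass $>1-\eta$ to a neighborhood $U$ of $\bigcup_j\gamma_j(K)$. By Lemma~\ref{lem:uniform-lower-bound}, every orbit visits $U$ with lower density $\ge1-\eta$ along every F{\o}lner sequence. At the common visiting times, pigeonhole over the $d\le m$ branches forces two of the $m+1$ coordinates to be close.

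\textbf{Your route.} You argue by contradiction and compress the failure of Weyl mean equicontinuity into a single $G$-invariant measure $\Theta$ on $X^{m+1}$ with $\int D_{m+1}\,d\Theta\ge\varepsilon$. Your diagonal choice of $F^{(k)}$ correctly absorbs the supremum over F{\o}lner sequences, since invariance of the limit only needs $|hF^{(k)}\triangle F^{(k)}|/|F^{(k)}|\to0$. You then use the invariant metric on $\Xeq$ and closed neighborhoods of the fiber relation to get $\Theta(R^{m+1}_\pi)=1$. Finally, you apply the definition of $\tdeg_G(X)$ to the single invariant measure $\bar\mu$ to conclude that $\Theta$ is carried by $\Delta^{m+1}_{\mathrm{fat}}(X)$.

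\textbf{The steps you flagged.} The bookkeeping is fine. Uniqueness of disintegration gives both $(\operatorname{pr}_i)_*\Theta_y=(\Theta_i)_y$ and $(\Theta_i)_y\le(m+1)\bar\mu_y$ for $\nu$-a.e.\ $y$; the first uses that $\Theta_y$ lives on $(\pi^{-1}(y))^{m+1}$. One trivial adjustment: negating Weyl equicontinuity only yields $D^W_{m+1}\ge\varepsilon$, so work with $\varepsilon/2$.

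\textbf{What each approach buys.} Your route is shorter and conceptually cleaner. It bypasses the Lusin--Novikov and Arsenin--Kunugui selections, Lusin's theorem and Urysohn's lemma. It also isolates the actual mechanism: when $\tdeg_G(X)\le m$, every invariant measure on $X^{m+1}$ supported on the fiber relation $R^{m+1}_\pi$ is concentrated on the fat diagonal. The paper's route is quantitative. It produces the modulus $\delta=\sigma$ explicitly from the Lusin data and the uniform visiting frequency to $U$, uniformly over all F{\o}lner sequences. It also keeps the branches $\gamma_j$, and hence the topomorphic-extension picture of Appendix~\ref{app:finite-topomorphic-characterization}, in view, mirroring the original $\mathbb Z$-argument of Breitenb\"ucher, Haupt and J\"ager.
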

\begin{proof}
Put $d:=\tdeg_G(X)\le m$.  By
Proposition~\ref{prop:finite-conditional-degree}, there exist Borel maps
$
\gamma_1,\ldots,\gamma_d:\Xeq\to X
$
and a Borel set $Y_0\subseteq \Xeq$ with
$
\nu(Y_0)=1
$
such that
\[
\pi(\gamma_j(y))=y,
\qquad
y\in \Xeq,\ 1\le j\le d.
\]
Moreover, for every  $\mu\in\M_G(X)$, disintegrating it over $\pi$ by
$
\mu=\int_{\Xeq}\mu_y\,d\nu(y),
$
one has
\begin{equation}\label{eq:support}
\supp(\mu_y)
\subseteq
\{\gamma_1(y),\ldots,\gamma_d(y)\}\qquad\text{for $\nu$-a.e. $y\in Y_0$.}
\end{equation}

If $X$ is a singleton, the conclusion is immediate.  Otherwise, fix
$\varepsilon>0$ and rescale the metric so that $\diam(X)=1$.
Choose $\eta>0$ such that
\begin{equation}\label{eq:eta-choice}
(m+1)\eta <\frac{\varepsilon}{2}.
\end{equation}
By Lusin's theorem and inner regularity, there exists a compact set
$
K\subseteq Y_0
$
such that
$
\nu(K)>1-\eta
$
and every restriction
$
\gamma_j|_K
$
is continuous for $j\in[d]$.  Since there are only finitely many branches, there exists $\alpha>0$ such that
\begin{equation}\label{eq:branch-uniform-continuity}
y,y'\in K \text{ with }
d_{\Xeq}(y,y')<\alpha
\implies
d_X(\gamma_j(y),\gamma_j(y'))<\frac{\varepsilon}{6}\qquad\text{for every $j\in[d]$.}
\end{equation}

Since $\pi:X\to\Xeq$ is uniformly continuous, there exists
$\sigma>0$ such that
\begin{equation}\label{eq:pi-uniform-continuity}
d_X(u,v)<\sigma
\quad\Longrightarrow\quad
d_{\Xeq}(\pi(u),\pi(v))<\frac{\alpha}{3}.
\end{equation}
Choose
$
0<\rho<\min\left\{\sigma,\frac{\varepsilon}{6}\right\}.$
Put
\[
C
:=
\bigcup_{j=1}^d\gamma_j(K),
\qquad
U:=B_\rho(C).
\]
The set $C$ is compact.  By Urysohn's lemma, choose $f\in C(X)$ such that
\[
0\le f\le\mathbf 1_U,
\qquad
f|_C=1.
\]

Let $\mu\in\M_G(X)$. By \eqref{eq:support} 
for $\nu$-a.e. $y\in K$, the measure $\mu_y$ is supported on
\[
\{\gamma_1(y),\ldots,\gamma_d(y)\}\subseteq C.
\]
Consequently,
\[
\mu(C)
=
\int_{\Xeq}\mu_y(C)\,d\nu(y)
\ge\nu(K)>1-\eta.
\]
It follows that
\[
\inf_{\mu\in\mc M_G(X)}\int_X f\,d\mu>1-\eta.
\]
By Lemma~\ref{lem:uniform-lower-bound}, for every $x\in X$ and every
 F{\o}lner sequence,
\begin{equation}\label{eq:uniform-hit-density}
\liminf_{n\to\infty}
\frac{
|\{g\in F_n:gx\in U\}|
}{
|F_n|
}
\ge\liminf_{n\to\infty}
\frac1{|F_n|}
\sum_{g\in F_n}f(gx)
\ge1-\eta.
\end{equation}

Now let
$
\mathbf x=(x_1,\ldots,x_{m+1})\in X^{m+1}
$
satisfy
\begin{equation}\label{eq:initial-x-closeness}
\max_{1\le i<j\le m+1}d_X(x_i,x_j)<\sigma.
\end{equation}
Fix an arbitrary F{\o}lner sequence $\Fcal$. For each $n\ge1$, put
\[
G_n
:=
\{g\in F_n:gx_i\in U\text{ for every }i\in[m+1]\}.
\] By
\eqref{eq:uniform-hit-density} and \eqref{eq:eta-choice}, 
\begin{equation}\label{eq:common-hit-density}
    \liminf_{n\to\infty}
\frac{
|G_n|
}{
|F_n|
}\ge 
1-(m+1)\eta> 1-\varepsilon/2.
\end{equation}
Fix $n\ge1$ and $g\in G_n$. For each $i$, choose
$
j_i\in[d],$
$
y_i\in K,
$
such that
$
d_X(gx_i,\gamma_{j_i}(y_i))<\rho.
$
For $i,i'\in[m+1]$, by \eqref{eq:pi-uniform-continuity}, we have
\begin{align*}
d_{\Xeq}(y_i,y_{i'})
&\le
d_{\Xeq}(y_i,\pi(gx_i))
+
d_{\Xeq}(\pi(gx_i),\pi(gx_{i'}))
+
d_{\Xeq}(\pi(gx_{i'}),y_{i'})
\\
&<
\frac{\alpha}{3}
+
d_{\Xeq}(\pi(x_i),\pi(x_{i'}))
+
\frac{\alpha}{3}
<
\alpha.
\end{align*}
Here we used
$
\pi(\gamma_{j_i}(y_i))=y_i
$
and the $G$-invariance of the metric on $\Xeq$.

Since there are $m+1$ indices $j_i$ but only $d\le m$ branches, there
exist $a\ne b$ such that
$
j_a=j_b=:j.
$
By \eqref{eq:branch-uniform-continuity},
\[
d_X(\gamma_j(y_a),\gamma_j(y_b))
<
\varepsilon/6.
\]
Therefore
\begin{align*}
d_X(gx_a,gx_b)
&\le
d_X(gx_a,\gamma_j(y_a))
+
d_X(\gamma_j(y_a),\gamma_j(y_b))+
d_X(\gamma_j(y_b),gx_b)\\
&<
2\rho+{\varepsilon}/{6}
<
{\varepsilon}/{2}.
\end{align*}
Thus
$
D_{m+1}(g\mathbf x)<{\varepsilon}/{2}
$
for all $g\in G_n$.  At all remaining times,
$
D_{m+1}(g\mathbf x)\le \diam(X)=1.
$
It follows from \eqref{eq:common-hit-density} that
\begin{align*}
D_{m+1,\Fcal}(\mathbf x)
<
{\varepsilon}/{2}
+
{\varepsilon}/{2}=
\varepsilon.
\end{align*}

The constants $\sigma$ are independent of the choice of
$\Fcal$. 
Hence $(X,G)$ is Weyl mean $(m+1)$-equicontinuous.
\end{proof}

\subsection{Proof of Theorem \ref{thm:intro-degree}}
Theorem \ref{thm:intro-degree} will be obtained by combining Proposition~\ref{prop:ergodic-degree-contribution} and
Theorem~\ref{thm:general-degree-decomposition} in the following.

For $\mu\in\M_G^e(X)$, retain the factorization
\[
 (X,\mu,G)\xrightarrow{\ \kappa_\mu\ }
 (Z_\mu,m_\mu,G)\xrightarrow{\ p_\mu\ }(\Xeq,\nu,G),
\]
and write
\[
 \mu=\int_{Z_\mu}\mu_{\mu,z}\,dm_\mu(z),
 \qquad
 m_\mu=\int_{\Xeq}(m_\mu)_y\,d\nu(y).
\]
Set
\[
\iota_\mu
:=
\bigl|p_\mu^{-1}(y)\bigr|,
\qquad
b_\mu
:=
\bigl|\supp(\mu_{\mu,z})\bigr|
\quad\text{for $m_\mu$-a.e. }z\in Z_\mu.
\]
Here $\iota_\mu$ is independent of
$y\in X_{\mathrm{eq}}$ by
Lemma~\ref{lem:homogeneous-factor-fibers}, while $b_\mu$ is
$m_\mu$-a.e. constant by
Proposition~\ref{prop:residual-sequence-entropy}.

\begin{proposition}\label{prop:ergodic-degree-contribution}
For every $\mu\in\M_G^e(X)$,
\[
 |\supp(\mu_y)|=\iota_\mu b_\mu
 \quad\text{for $\nu$-a.e. }y\in\Xeq.
\]
  Consequently,
\[
 |\supp(\mu_y)|
 =\iota_\mu\exp\bigl(h_\mu^*(G)\bigr)
 \quad\text{for $\nu$-a.e. }y\in\Xeq.
\]
\end{proposition}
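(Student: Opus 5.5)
The plan is to compute the disintegration of $\mu$ over $\pi$ by composing the two disintegrations along the factorization $\pi=p_\mu\circ\kappa_\mu$. By Proposition~\ref{prop1} we have $\pi=p_\mu\circ\kappa_\mu$ $\mu$-a.e. Uniqueness of disintegration then gives, for $\nu$-a.e. $y$,
\[
\mu_y=\int_{p_\mu^{-1}(y)}\mu_{\mu,z}\,d(m_\mu)_y(z).
\]
To justify this, note that the right-hand side is a Borel kernel in $y$ whose integral against $\nu$ equals $\int_{Z_\mu}\mu_{\mu,z}\,dm_\mu(z)=\mu$. Moreover, for a.e. $y$ it is concentrated on $\pi^{-1}(y)$, because $\mu_{\mu,z}$ is concentrated on $\kappa_\mu^{-1}(z)$ and $\pi=p_\mu\circ\kappa_\mu$ there.

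The fiber structure of $p_\mu$ comes next, and we treat two cases. If $\iota_\mu<\infty$, Lemma~\ref{lem:homogeneous-factor-fibers}(b) shows that $(m_\mu)_y$ is the uniform measure on the $\iota_\mu$ points $z_1,\ldots,z_{\iota_\mu}$ of $p_\mu^{-1}(y)$, so that
\[
\mu_y=\iota_\mu^{-1}\sum_{i}\mu_{\mu,z_i}.
\]
Since $(m_\mu)_y$ is uniform with atoms of positive mass and $m_\mu$ is the $\nu$-average of these conditionals, each atom $z_i$ lies for $\nu$-a.e. $y$ in the conull set where $|\supp(\mu_{\mu,z})|=b_\mu$ (Proposition~\ref{prop:residual-sequence-entropy}) and where $\mu_{\mu,z}(\kappa_\mu^{-1}(z))=1$. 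The supports $\supp(\mu_{\mu,z_i})$ consist of atoms when $b_\mu<\infty$; these atoms lie in the disjoint sets $\kappa_\mu^{-1}(z_i)$, so the supports are pairwise disjoint. Hence $|\supp\mu_y|=\iota_\mu b_\mu$, and the count is $\infty$ if $b_\mu=\infty$, since then some summand already has infinite support.

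If $\iota_\mu=\infty$, the claim is $|\supp\mu_y|=\infty$. Here $(m_\mu)_y$ has full support on the infinite fiber, so it has infinitely many disjoint open sets of positive mass. I will choose Borel sets $W_1,W_2,\ldots$ in $Z_\mu$ that are pairwise disjoint with $(m_\mu)_y(W_k)>0$. Then the sets $\kappa_\mu^{-1}(W_k)$ (restricted to the good conull set) are disjoint and carry positive $\mu_y$-mass, which forces infinitely many support points.

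I expect this infinite case to be the main obstacle. The reason is that disjointness of Borel sets does not by itself yield disjoint closed supports: the supports of $\mu_y$ restricted to the different pieces could overlap. The clean way around this is to show that $\mu_y$ restricted to $\kappa_\mu^{-1}(W_k)$ is a nonzero measure, so that there are infinitely many mutually singular nonzero pieces summing to $\mu_y$, and that a measure with finite support can be split into at most that many mutually singular nonzero parts.

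For the finite case, atoms give exact counting. If $\mu_y$ had finite support of size $s$, it would be purely atomic with $s$ atoms, and each of the infinitely many mutually singular pieces would need its own atom — a contradiction. The second displayed identity then follows immediately from $b_\mu=\exp(h_\mu^*(G))$ in Proposition~\ref{prop:residual-sequence-entropy}, using the convention $n\cdot\infty=\infty$.
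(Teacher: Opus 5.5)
Your proposal is correct and matches the paper's proof step for step. Both use the composed disintegration $\mu_y=\int_{p_\mu^{-1}(y)}\mu_{\mu,z}\,d(m_\mu)_y(z)$, and both handle the finite-$\iota_\mu$ case through uniform fiber measures and disjoint atomic supports lying in distinct $\kappa_\mu$-fibers. Your infinite-case argument, using infinitely many disjoint sets of positive $(m_\mu)_y$-mass, is an unpacked form of the paper's observation: if $\mu_y$ had finite support, then $(\kappa_\mu)_*\mu_y=(m_\mu)_y$ would also have finite support, contradicting full support on an infinite fiber.
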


\begin{proof}
The composition rule for disintegrations gives
\begin{equation}\label{eq:general-composed-disintegration}
 \mu_y
 =\int_{p_\mu^{-1}(y)}\mu_{\mu,z}\,d(m_\mu)_y(z),
 \qquad
 (\kappa_\mu)_*\mu_y=(m_\mu)_y \quad\text{for $\nu$-a.e. }y\in\Xeq.
\end{equation}

Assume first that $\iota_\mu<\infty$.  By
Lemma~\ref{lem:homogeneous-factor-fibers}, $(m_\mu)_y$ is the uniform
probability measure on $p_\mu^{-1}(y)$.  By Proposition \ref{prop:residual-sequence-entropy}, we may let $Z_\mu^0$ be an
$m_\mu$-conull Borel set on which
\[
|\supp(\mu_{\mu,z})|=b_\mu,
 \qquad
 \mu_{\mu,z}\bigl(\kappa_\mu^{-1}(\{z\})\bigr)=1.
\]
Disintegrating $m_\mu(Z_\mu^0)=1$ over $p_\mu$ and using uniformity on
the finite fibers, we obtain
$p_\mu^{-1}(y)\subseteq Z_\mu^0$ for $\nu$-a.e. $y$.
If $b_\mu<\infty$, every point in $\supp(\mu_{\mu,z})$ is an atom of
positive mass and therefore belongs to $\kappa_\mu^{-1}(\{z\})$.
Thus the supports corresponding to distinct $z\in p_\mu^{-1}(y)$ are
disjoint, and \eqref{eq:general-composed-disintegration} yields
\[
 \supp(\mu_y)
 =\bigsqcup_{z\in p_\mu^{-1}(y)}\supp(\mu_{\mu,z}),
 \qquad
|\supp(\mu_y)|=\iota_\mu b_\mu.
\]

If instead $b_\mu=\infty$, every component in the finite uniform sum
already has infinite support, and hence so does $\mu_y$.
Now assume that $\iota_\mu=\infty$.  The conditional measure
$(m_\mu)_y$ has full support on the infinite compact fiber
$p_\mu^{-1}(y)$.  If $\mu_y$ had finite support, then its push-forward
$(\kappa_\mu)_*\mu_y$ would also have finite support, contradicting the
second identity in \eqref{eq:general-composed-disintegration}.  Hence
$|\supp(\mu_y)|=\infty=\iota_\mu b_\mu$.

Finally, Proposition~\ref{prop:residual-sequence-entropy} gives
$b_\mu=\exp(h_\mu^*(G))$.
\end{proof}

\begin{theorem}\label{thm:general-degree-decomposition}
One has
\[
 \tdeg_G(X)
 =\sum_{\mu\in\M_G^e(X)}\iota_\mu b_\mu.
\]
\end{theorem}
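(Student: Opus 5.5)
I will prove the two inequalities $\tdeg_G(X)\ge\sum_\mu\iota_\mu b_\mu$ and $\tdeg_G(X)\le\sum_\mu\iota_\mu b_\mu$ separately, using Proposition~\ref{prop:ergodic-degree-contribution} to identify the conditional support size of each ergodic measure with $\iota_\mu b_\mu$. Both sides lie in $\N\cup\{\infty\}$, with the sum in the extended sense. Hence it suffices to compare finite partial sums with conditional support sizes of suitable invariant measures.

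\emph{Lower bound.} Fix a finite set $F=\{\mu_1,\ldots,\mu_s\}\subseteq\M_G^e(X)$ and put $\mu:=\frac1s\sum_i\mu_i$. As in the proof of Proposition~\ref{prop:finite-conditional-degree}, choose pairwise disjoint invariant Borel sets $E_i$ with $\mu_i(E_i)=1$. For $\nu$-a.e.\ $y$ we have $(\mu_i)_y(E_i)=1$, and the conditional measure of $\mu$ is $\mu_y=\frac1s\sum_i(\mu_i)_y$.

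If every $\iota_{\mu_i}b_{\mu_i}$ is finite, then by Proposition~\ref{prop:ergodic-degree-contribution} each $(\mu_i)_y$ is finitely atomic with $\iota_{\mu_i}b_{\mu_i}$ atoms. These atoms lie in $E_i$, so the supports are disjoint and $|\supp(\mu_y)|=\sum_i\iota_{\mu_i}b_{\mu_i}$ for a.e.\ $y$. If some $\iota_{\mu_i}b_{\mu_i}=\infty$, then $\supp(\mu_y)\supseteq\supp((\mu_i)_y)$ is infinite for a.e.\ $y$. In either case $\tdeg_G(X)\ge\sum_{\mu\in F}\iota_\mu b_\mu$, and taking the supremum over $F$ gives the lower bound.

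\emph{Upper bound.} We may assume $S:=\sum_\mu\iota_\mu b_\mu<\infty$; otherwise there is nothing to prove. Then each term is at least $1$, so there are at most $S$ ergodic measures, say $\mu_1,\ldots,\mu_\ell$. Every $\mu\in\M_G(X)$ is a convex combination $\sum_i c_i\mu_i$ by the ergodic decomposition, which here is finite. Disintegration over $\pi$ is linear, so $\mu_y=\sum_ic_i(\mu_i)_y$ for $\nu$-a.e.\ $y$. Therefore
\[
|\supp(\mu_y)|\le\sum_i|\supp((\mu_i)_y)|=\sum_i\iota_{\mu_i}b_{\mu_i}=S
\qquad\text{for $\nu$-a.e.\ }y,
\]
and so $\tdeg_G(X)\le S$.

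The main delicate point is the measure-theoretic bookkeeping. First, the infinite-degree case must be organized via finite partial sums and the extended-sum convention. Second, in the upper bound one needs finiteness of $\M_G^e(X)$ before using that $\mu_y$ is a finite convex combination. When $S<\infty$ this finiteness comes for free from $\iota_\mu b_\mu\ge1$, so no countable-decomposition argument is required. I expect no genuine obstacle beyond checking that the null sets coming from the finitely many disintegrations can be taken jointly.
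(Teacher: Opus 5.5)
Your proof is correct and follows the paper's argument essentially step by step. For the lower bound you average $\frac1{|F|}\sum_{\mu\in F}\mu$ over a finite set of ergodic measures, whose conditional supports are disjoint because their atoms lie in pairwise disjoint invariant full-measure sets. For the upper bound you use that $\M_G^e(X)$ is finite when the sum is finite, and that the conditional support of a convex combination lies in the union of the ergodic conditional supports. The only cosmetic difference is that you absorb an infinite term into the finite-$F$ argument via $\supp(\mu_y)\supseteq\supp((\mu_i)_y)$. The paper instead disposes of that case directly with Proposition~\ref{prop:ergodic-degree-contribution} applied to the single measure.
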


\begin{proof}
Put $k_\mu:=\iota_\mu b_\mu$.  If $k_\mu=\infty$ for some ergodic
$\mu$, Proposition~\ref{prop:ergodic-degree-contribution} immediately
gives $\tdeg_G(X)=\infty$.  We may therefore suppose, when proving the
nontrivial lower bound, that the measures under consideration have finite
$k_\mu$.

Let $F\subseteq\M_G^e(X)$ be finite.  Distinct ergodic invariant measures
are mutually singular, so there are pairwise disjoint invariant Borel sets
$(E_\mu)_{\mu\in F}$ with $\mu(E_\mu)=1$.  Since the conditional measures
of every $\mu\in F$ over $\pi$ have finite support, their support points
are atoms and hence lie in $E_\mu$ for $\nu$-a.e. $y$.  The conditional
measure of
$
 \theta_F:=\frac1{|F|}\sum_{\mu\in F}\mu
$
is $|F|^{-1}\sum_{\mu\in F}\mu_y$, and the component supports are
pairwise disjoint.  Therefore
\[
 |\supp((\theta_F)_y)|=\sum_{\mu\in F}k_\mu
 \quad\text{for $\nu$-a.e. }y\in\Xeq.
\]
Taking the supremum over finite $F$ gives
\[
 \tdeg_G(X)
 \ge\sum_{\mu\in\M_G^e(X)}k_\mu.
\]

If the extended sum on the right is infinite, this already proves
$\tdeg_G(X)=\infty$.  If it is finite, then there are only finitely many
ergodic measures, say $\mu_1,\ldots,\mu_\ell$, and every $k_{\mu_i}$ is
finite.  Each $\theta\in\M_G(X)$ has the form
$\theta=\sum_{i=1}^\ell c_i\mu_i$.  On a common conull set,
\[
 \supp(\theta_y)
 \subseteq\bigcup_{\{i:c_i>0\}}\supp((\mu_i)_y),
\]
so
\[
|\supp(\theta_y)|
 \le\sum_{i=1}^\ell k_{\mu_i}.
\]
Taking the supremum over $\theta$ gives the reverse inequality. 
\end{proof}

\subsection{The converse degree bound}
We now show that conditional topomorphic degree at least $r$ produces
a local mean $r$-sensitive tuple, contradicting mean
$r$-equicontinuity.

\begin{theorem}
\label{thm:converse-degree-bound}
If $(X,G)$ is $\Fcal$-mean $r$-equicontinuous for some 
F{\o}lner sequence $\Fcal$, then
\[
\tdeg_G(X)\le r-1.
\]
\end{theorem}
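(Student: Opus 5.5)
The plan is to combine the two multiplicity obstructions of Section~\ref{subsec:multiplicity-obstructions} with the degree formula of Theorem~\ref{thm:general-degree-decomposition}. By Lemma~\ref{lem:label-budget}, $(X,G)$ has finitely many ergodic measures $\mu_1,\ldots,\mu_\ell$, and for every $x$,
\[
\sum_{i=1}^\ell\iota_{\mu_i}=\sum_i|A_{\mu_i}(x)|=|\mathcal A(x)|\le r-1,
\]
using Corollary~\ref{cor:automatic-phase-persistence}. In particular every $\iota_i:=\iota_{\mu_i}$ is finite. By Lemma~\ref{lem:residual-multiplicity}, every $b_i:=b_{\mu_i}\le r-1$ is finite. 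Hence Theorem~\ref{thm:general-degree-decomposition} gives $\tdeg_G(X)=\sum_i\iota_ib_i<\infty$. It remains to show $\sum_i\iota_ib_i\le r-1$. For this I would argue by contradiction: assume $\sum_i\iota_ib_i\ge r$ and produce an $\Fcal$-mean $r$-sensitive point, contradicting Lemma~\ref{lem:no-sensitive-point}.

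The construction uses the full multiplicity packet. Fix $x\in X$, put $y=\pi(x)$, and list $A_{\mu_i}(x)=p_{\mu_i}^{-1}(y)=\{z_{i,1},\ldots,z_{i,\iota_i}\}$. Choose nonnegative integers $n_{i,j}\le b_i$ with $\sum_{i,j}n_{i,j}=r$. Form $N=r$ labels: the pair $(\mu_i,z_{i,j})$ repeated $n_{i,j}$ times, with lifts $t_{i,j}\in K_{\mu_i}$ of $z_{i,j}$. Let $t^0$ be the resulting element of $P$. Corollary~\ref{cor:phase-localization} applies once we verify
\[
\Lambda_{t^0}\bigl(X^r\setminus\Delta^r_{\mathrm{fat}}(X)\bigr)>0.
\]
Unfold $\Lambda_{t^0}=\int_L\bigotimes_q\mu_{q,\ell_qt_q^0H_q}\,dm_L(\ell)$ and group coordinates by label. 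As in the proof of Lemma~\ref{lem:label-budget}, two coordinates with different measures are a.s.\ distinct, since they lie in disjoint invariant sets $E_\mu$. Two coordinates with the same measure but different phases $z_{i,j}\ne z_{i,j'}$ are also a.s.\ distinct: $L\subseteq\{\ell_p=\ell_q\}$, so the translated phases stay different and the conditional measures sit on different $\kappa_\mu$-fibers.

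The remaining case is the $n_{i,j}$ coordinates sharing the same label $(\mu_i,z_{i,j})$. Again $L$ forces equal $\ell$-components, so for $m_L$-a.e.\ $\ell$ these coordinates carry the product measure $\mu_{i,w}^{\otimes n_{i,j}}$ at one common phase $w=\ell t_{i,j}H$. As $\ell$ ranges over $L$, the phase $w$ is distributed as $m_{\mu_i}$, by the marginal property in Lemma~\ref{lem:compact-phase-orbit-measures}(iii). So for a.e.\ $\ell$ the phase $w$ lies in the conull set where $|\supp\mu_{i,w}|=b_i\ge n_{i,j}$. As in the proof of Lemma~\ref{lem:residual-multiplicity}, $\mu_{i,w}^{\otimes n_{i,j}}$ then gives positive mass to the off-diagonal part. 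Combining the three cases, the integrand
\[
\Xi_\ell\bigl(X^r\setminus\Delta^r_{\mathrm{fat}}(X)\bigr)
\]
is positive for $m_L$-a.e.\ $\ell$, so the integral is positive.

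The main obstacle is making this "a.e.\ $\ell$" bookkeeping rigorous when several labels share a measure. The coordinates for $(\mu_i,z_{i,j})$ and $(\mu_i,z_{i,j'})$ are coupled through the same $\ell_i$, so the phase tuple is not independent across labels. The point to get right is that each individual phase $\ell_it_{i,j}H_i$ still has law $m_{\mu_i}$, so a countable intersection of conull conditions holds for a.e.\ $\ell$. Positivity is then checked pointwise in $\ell$, using that the factor $\Xi_\ell$ is a product over labels, and no independence across labels is needed. With this, Corollary~\ref{cor:phase-localization} makes $x$ an $\Fcal$-mean $r$-sensitive point, which is the required contradiction.
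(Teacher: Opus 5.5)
Your proposal is correct and follows the paper's proof. You get finiteness of $\ell$, $\iota_i$, $b_i$ from Lemmas~\ref{lem:label-budget} and~\ref{lem:residual-multiplicity} together with Corollary~\ref{cor:automatic-phase-persistence}, apply the degree formula, and then use packet separation for the mixed phase joining to feed Corollary~\ref{cor:phase-localization}. The only difference is cosmetic: the paper uses the full packet of $M=\sum_i\iota_ib_i$ coordinates (each phase repeated $b_i$ times) and then notes that mean $M$-sensitivity implies mean $r$-sensitivity, whereas you truncate directly to an $r$-subpacket with multiplicities $n_{i,j}\le b_i$, exactly as the paper later does in Lemma~\ref{lem:finite-packet-off-diagonal}.
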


\begin{proof}
By Lemmas~\ref{lem:label-budget} and
\ref{lem:residual-multiplicity}, together with
Corollary~\ref{cor:automatic-phase-persistence}, there are only finitely
many ergodic measures and each $\iota_i$ and $b_i$ is finite.
Therefore Theorem~\ref{thm:general-degree-decomposition} yields
\[
M
:=
\tdeg_G(X)
=
\sum_{i=1}^{\ell}\iota_i b_i
<\infty.
\]
Assume, towards a contradiction, that $M\ge r$.

Fix $x_0\in X$, put $y_0:=\pi(x_0)$, and, using
Corollary~\ref{cor:automatic-phase-persistence}, write
\[
p_i^{-1}(y_0)
=
\{z_{i,1},\ldots,z_{i,\iota_i}\}
=
A_i(x_0).
\]
For each $i$ and $j$, choose $t_{i,j}\in K_i$ such that
$
z_{i,j}=t_{i,j}H_i.
$
Let $t^0$ be the $M$-tuple obtained by repeating $t_{i,j}$ exactly
$b_i$ times for every pair $(i,j)$, and let $L^{(M)}$ be the
associated phase subgroup.  Since all coordinate copies carrying
$\mu_i$ use the same homomorphism
$\alpha_i:G\to K_i$, every $u\in L^{(M)}$ has a common
$K_i$-coordinate across these copies; denote it by $u_i$. Set
\[
z_{i,j}(u):=u_it_{i,j}H_i,
\qquad
u\in L^{(M)}.
\]
The corresponding mixed phase joining on $X^M$ is
\begin{equation}\label{eq:full-packet-phase-joining}
\Lambda_{t^0}
=
\int_{L^{(M)}}
\bigotimes_{i=1}^{\ell}
\bigotimes_{j=1}^{\iota_i}
\mu_{i,z_{i,j}(u)}^{\otimes b_i}
\,dm_{L^{(M)}}(u).
\end{equation}
Choose pairwise disjoint invariant Borel sets $E_i$ with $\mu_i(E_i)=1$, $i\in[\ell]$.
For each $i$, choose an $m_i$-conull Borel set
$\mathcal G_i\subseteq Z_i$ such that, for every
$z\in\mathcal G_i$,
\[
|\supp(\mu_{i,z})|=b_i
\qquad\text{and}\qquad
\mu_{i,z}
\bigl(E_i\cap\kappa_i^{-1}(\{z\})\bigr)=1.
\]

For fixed $i$, the points $z_{i,1}(u),\ldots,z_{i,\iota_i}(u)$
are pairwise distinct, since left translation by $u_i$ is a
bijection of $Z_i=K_i/H_i$.
The coordinate projection $L^{(M)}\to K_i$ is surjective, so it
pushes Haar measure on $L^{(M)}$ forward to normalized Haar measure
on $K_i$.  Consequently, for every $i,j$, the map
$
u\mapsto z_{i,j}(u)
$
has distribution $m_i$.  Since only finitely many pairs $(i,j)$ occur, for
$m_{L^{(M)}}$-a.e. $u\in L^{(M)}$,
\begin{equation}\label{eq:full-packet-good-phases}
z_{i,j}(u)\in\mathcal G_i
\qquad
\text{for all }i,j.
\end{equation}

Fix $u$ satisfying
\eqref{eq:full-packet-good-phases}, and denote the integrand in
\eqref{eq:full-packet-phase-joining} by
$
\Xi_u
:=
\bigotimes_{i=1}^{\ell}
\bigotimes_{j=1}^{\iota_i}
\mu_{i,z_{i,j}(u)}^{\otimes b_i}.
$
For each $i,j$, let
\[
q_{i,j}(u)
:=
\mu_{i,z_{i,j}(u)}^{\otimes b_i}
\bigl(X^{b_i}\setminus\Delta_{\mathrm{fat}}^{b_i}(X)\bigr).
\]
Since $\mu_{i,z_{i,j}(u)}$ has exactly $b_i$ support points, each
of positive mass, its $b_i$-fold product assigns positive mass to the
set of pairwise distinct $b_i$-tuples.  Hence
$
q_{i,j}(u)>0.
$

Coordinates belonging to different blocks cannot collide.  If two blocks
have the same index $i$ but different indices $j\ne j'$, their
conditional measures are concentrated on the disjoint fibers
\[
\kappa_i^{-1}(\{z_{i,j}(u)\})
\quad\text{and}\quad
\kappa_i^{-1}(\{z_{i,j'}(u)\}).
\]
If the blocks have different indices $i\ne i'$, their conditional
measures are concentrated on the disjoint sets $E_i$ and $E_{i'}$.
It follows that
\[
\Xi_u\bigl(X^M\setminus\Delta_{\mathrm{fat}}^M(X)\bigr)
=
\prod_{i=1}^{\ell}
\prod_{j=1}^{\iota_i}
q_{i,j}(u)>0.
\]
Integrating the conditional measures defining the phase joining,
we obtain
\[
\Lambda_{t^0}\bigl(X^M\setminus\Delta_{\mathrm{fat}}^M(X)\bigr)
=
\int_{L^{(M)}}
\Xi_u\bigl(X^M\setminus\Delta_{\mathrm{fat}}^M(X)\bigr)\,
dm_{L^{(M)}}(u)
>0.
\]

Every prescribed phase $z_{i,j}$ belongs to $A_i(x_0)$.
Therefore Corollary~\ref{cor:phase-localization} yields
$\varepsilon>0$ such that every neighborhood of $x_0$ contains
an $M$-tuple
$
\mathbf x=(x_1,\ldots,x_M)
$
with
$
D_{M,\Fcal}(\mathbf x)\ge\varepsilon.
$
Thus $x_0$ is an $\Fcal$-mean $M$-sensitive point and hence, since
$M\ge r$, also an $\Fcal$-mean $r$-sensitive point. This
contradicts Lemma~\ref{lem:no-sensitive-point}.  Therefore $\tdeg_G(X)=M\le r-1 $.
\end{proof}

\subsection{Completion of the proof of Theorem~\ref{thm:main}}

\begin{proof}[Proof of Theorem~\ref{thm:main}]
The implication
$\textup{(iii)}\Rightarrow\textup{(i)}$ follows from
Theorem~\ref{thm:forward}, applied with $m=r-1$.
Since
$
D_{r,\Fcal}(\mathbf x)
\le
D_r^{\mathrm W}(\mathbf x)
$
for every F{\o}lner sequence $\Fcal$, we have
$\textup{(i)}\Rightarrow\textup{(ii)}$.
Finally,
$\textup{(ii)}\Rightarrow\textup{(iii)}$ is
Theorem~\ref{thm:converse-degree-bound}.
\end{proof}

\section{IT-tuples and sequence-entropy consequences}
\label{sec:sequence-entropy}

This section proves Theorem~\ref{thm:intro-it-tuples}.  For
$\mu\in\M_G^e(X)$, retain the notation
\[
(X,\mu,G)
\overset{\kappa_\mu}{\longrightarrow}
(Z_\mu,m_\mu,G)
\overset{p_\mu}{\longrightarrow}
(\Xeq,\nu,G)
\]
from the preceding sections.  Let
\[
b_\mu
:=
|\supp(\mu_{\mu,z})|
\quad
\text{for $m_\mu$-a.e. }z\in Z_\mu\qquad\text{and}\qquad
\iota_\mu
:=
|p_\mu^{-1}(y)|,
\quad
y\in\Xeq.
\]

\subsection{Mixed phase supports and IT-tuples}
\label{subsec:mixed-phases-IT}
The construction below is a multivariate phase version of the branching
argument developed in \cite{LiuXuZhang2025Measure}.
Fix ergodic measures
$\mu_1,\ldots,\mu_N$, compact homogeneous models
$Z_q=K_q/H_q$, and nonempty open sets $V_q\subseteq X$.
For a nonempty finite set $B$ and phases
\[
z_{\beta,q}=t_{\beta,q}H_q\in Z_q,
\qquad
(\beta,q)\in B\times[N],
\]
define
\begin{equation}\label{eq:phase-correlation}
\Phi_B\bigl((z_{\beta,q})_{\beta,q}\bigr)
:=
\int_L
\prod_{\beta\in B}
\prod_{q=1}^N
\mu_{q,\ell_qt_{\beta,q}H_q}(V_q)
\,dm_L(\ell),
\end{equation}
where
\[
L
:=
\overline{
\left\{
(\alpha_1(g),\ldots,\alpha_N(g)):
g\in G
\right\}
}.
\]
The value in \eqref{eq:phase-correlation} is independent of the chosen
lifts.  It is the mass of the target rectangle associated with the
repeated coordinate list $B\times[N]$, and is continuous in the phases
by Lemma~\ref{lem:rectangle-continuity}.

\begin{proposition}
\label{prop:mixed-phases-produce-IT}
Fix $x_0\in X$ and 
$
z_q^0=t_q^0H_q\in A_q(x_0),$ $
q\in[N],
$ and put $t^0=(t_1^0,\ldots,t_N^0)$. 
Let $\Lambda_{t^0}$ be the associated mixed phase joining.  Then
\[
\supp(\Lambda_{t^0})
\subseteq
\IT_N(X,G).
\]
\end{proposition}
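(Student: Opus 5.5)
Fix $\mathbf w=(w_1,\ldots,w_N)\in\supp(\Lambda_{t^0})$ and open neighborhoods $V_q\ni w_q$. We must produce an infinite set $J\subseteq G$ that is an independence set for $(V_1,\ldots,V_N)$. The plan is a recursive branching construction in the spirit of \cite{LiuXuZhang2025Measure}. The function $\Phi_B$ of \eqref{eq:phase-correlation} controls the branching, and the compatibility $z_q^0\in A_q(x_0)$ supplies, inside any neighborhood of $x_0$, points that realise all patterns simultaneously.

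\emph{Base step.} For $B$ a singleton and phases $z_{\beta,q}=z_q^0$, we have $\Phi_B=\Lambda_{t^0}(V_1\times\cdots\times V_N)=:a>0$, since $\mathbf w$ lies in the support.

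\emph{Inductive hypothesis.} Suppose we have already found $g_1,\ldots,g_n$ such that, for every map $\omega:[n]\to[N]$, the set
\[
\bigcap_{k\le n} g_k^{-1}V_{\omega(k)}
\]
has positive $\mu_q$-conditional mass along a positive-measure family of compatible phases. Concretely, we encode this as positivity of a mixed joining with coordinate list indexed by $[N]^n$, with all phases equal to $z^0$-type labels.

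\emph{Inductive step.} To add $g_{n+1}$, we need that, for each pattern cylinder $W_\omega$ of positive mass in the joining, the sets $W_\omega\cap g^{-1}V_q$ remain positive for all $q$ simultaneously. This is where $\Phi_B$ enters, with $B=[N]^n$ indexing the current cylinders.

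Consider the $G$-invariant joining of all $N^{n}\cdot N$ coordinates given by $\Lambda$ for the repeated list. We apply the mean ergodic theorem (Lemma~\ref{lem:mean-ergodic}) to an ergodic mixed joining chosen as in the proof of Theorem~\ref{thm:mixed-phase-local-realization}. Taking $t$ close to $t^0$ and ergodic, continuity from Lemma~\ref{lem:rectangle-continuity} keeps the target rectangle masses above $a/2$. Moreover, the correlation between the current cylinder and $g^{-1}(V_1\times\cdots\times V_N)$ averages to a product that is bounded below by a positive constant times $\Phi$. Hence a positive density of $g$ work for all finitely many patterns at once, and we pick one $g_{n+1}\notin\{g_1,\ldots,g_n\}$. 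The key input is that the phase joining factors through the compact parts via $\mathcal I_{\boldsymbol\mu}\subseteq\bigotimes\cK_{\mu_q}$ (Lemma~\ref{lem:mixed-invariant-algebra}), so that the Følner averages converge to the $\Phi$-integrals. Because these integrals are continuous in phases, they stay positive.

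The resulting set $J=\{g_n\}$ is then an independence set for all finite subsets, using nonemptiness of positive-measure sets. Since $V_q$ were arbitrary, $\mathbf w\in\IT_N(X,G)$.

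The main obstacle is the inductive step. We must guarantee a uniform positive lower bound for the correlation average that does not degrade to zero along the induction. Positivity at each finite stage suffices, since we only need an infinite sequence, not a density. The delicate point is checking that the limit of the averages equals $\int\Phi_B$ over the phase orbit rather than merely a product of marginals. This is exactly where compactness of $\cK_\mu$ and the Haar-averaged structure of $\lambda_t$ are used.
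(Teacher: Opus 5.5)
Your skeleton is the right one: a tree of open cylinders indexed by $[N]^n$, the rectangle mass $\Phi_B$ for the repeated list, and an ergodic mixed joining together with Lemma~\ref{lem:mean-ergodic} to produce one $g_{n+1}$ good for all branches. But the inductive step, which you yourself flag as the main obstacle, is asserted rather than proved, and the missing ingredient is what makes the induction close.

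For an ergodic $\Lambda_t$ on the list $[N]^n\times[N]$, put $S=\prod_{(\omega,q)}W_\omega$ and $T=\prod_{(\omega,q)}V_q$. The averages of $\Lambda_t\bigl(S\cap g^{-1}T\bigr)$ converge to $\Lambda_t(S)\Lambda_t(T)$. Your ``positive constant'' is therefore $\Lambda_t(S)$, and nothing in your hypothesis makes it positive. A mixed joining is a Haar average of products of conditional measures at correlated prescribed phases, so it can kill a rectangle even when every marginal $\mu_q(W_\omega)$ is positive.

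The only available mechanism for $\Lambda_t(S)>0$ is the compatibility argument inside Theorem~\ref{thm:mixed-phase-local-realization}. It requires, for every coordinate, a point of the cylinder together with a phase lying in $A_q$ of that point. Choosing ``$t$ close to $t^0$'' does not supply this, for two reasons.
\begin{itemize}
\item $A_q(x)=p_q^{-1}(\pi(x))$ moves with $\pi(x)$, so the new cylinders need not contain points compatible with phases near $t^0$.
\item After $g_{n+1}$ is chosen, you only know that the coordinate carrying $\mu_q$ charges $W_\omega\cap g_{n+1}^{-1}V_q$. At the next level this cylinder must carry all of $\mu_1,\ldots,\mu_N$, with compatible phases at which the new, $N$ times longer, target rectangle is still charged.
\end{itemize}
Invoking Lemma~\ref{lem:mixed-invariant-algebra} for the non-ergodic product does not help either. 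The averages then converge to $\int_P\Lambda_t(S)\Lambda_t(T)\,dm_P(t)$, which is positive only if both masses are positive on a common set of phases of positive measure. That is the same problem.

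The paper closes the loop with a pointwise invariant: at level $B$ there are points $x_\beta\in W_\beta$ and phases $z_{\beta,q}\in A_q(x_\beta)$ with $\Phi_B\bigl((z_{\beta,q})_{\beta,q}\bigr)>0$. The step then runs as follows.
\begin{itemize}
\item Duplicating every branch gives the cloning identity $\Phi_{B\times[N]}(\widetilde z)=\int_L F^N\,dm_L>0$, where $F(\ell)=\prod_{\beta,q}\mu_{q,\ell_qt_{\beta,q}H_q}(V_q)$.
\item By Lemma~\ref{lem:rectangle-continuity}, this positivity survives perturbation of the phases inside neighborhoods $\mathcal O_{\beta,q}$ chosen at this stage.
\item Hausdorff continuity of $A_q$ (Lemma~\ref{lem:automatic-fiber-continuity}) gives neighborhoods $S_\beta\subseteq W_\beta$ of $x_\beta$ all of whose points have compatible phases in $\mathcal O_{\beta,q}$.
\item Theorem~\ref{thm:mixed-phase-local-realization}, applied to the list $B\times[N]$ with source neighborhoods $S_\beta$ and target mass $\Phi_B>0$, yields points $y_{\beta,k}\in S_\beta$ and infinitely many admissible $g$.
\item One re-selects $z_{\beta k,q}\in A_q(y_{\beta,k})\cap\mathcal O_{\beta,q}$, restoring the invariant one level down.
\end{itemize}
The admissible perturbation is chosen afresh at each stage and may shrink. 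So the phases must be re-chosen at every step, compatibly with the newly produced points, rather than kept in a fixed neighborhood of $t^0$.

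Relatedly, your bound ``above $a/2$'' is wrong beyond the first stage. The relevant target mass is $\Phi_{[N]^n}$, not $a$, and only its positivity is needed.
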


\begin{proof}
Fix
$\mathbf w=(w_1,\ldots,w_N)\in\supp(\Lambda_{t^0})$
and neighborhoods $V_q\ni w_q$.  Then
\begin{equation}\label{eq:initial-correlation}
\Phi_{\{\emptyset\}}
\bigl((z_q^0)_{q=1}^N\bigr)
=
\Lambda_{t^0}(V_1\times\cdots\times V_N)
>0.
\end{equation}
Fix a F{\o}lner sequence
$\Fcal=(F_n)_{n\ge1}$.  We use it to construct an infinite
independence set for $(V_1,\ldots,V_N)$.

We use the following inductive step.  Let $B$ be finite and nonempty.
Suppose that, for every $\beta\in B$, we have chosen
\[
x_\beta\in W_\beta,
\qquad
W_\beta\subseteq X\text{ nonempty and open},
\qquad
z_{\beta,q}\in A_q(x_\beta)
\quad(q\in[N]),
\]
and that
\begin{equation}\label{eq:inductive-correlation}
\Phi_B\bigl((z_{\beta,q})_{\beta,q}\bigr)>0.
\end{equation}

Let $E_0\subseteq G$ be finite.  We shall find
$g_*\in G\setminus E_0$ and, for every
$(\beta,k)\in B\times[N]$, 
\[
x_{\beta k}\in W_{\beta k}
\subseteq
W_\beta\cap g_*^{-1}V_k,
\qquad
z_{\beta k,q}\in A_q(x_{\beta k}),
\]
such that
\begin{equation}\label{eq:child-phase-correlation}
\Phi_{B\times[N]}
\bigl((z_{\beta k,q})_{\beta,k,q}\bigr)>0.
\end{equation}

Set
\[
\widetilde z_{\beta k,q}:=z_{\beta,q},
\qquad
(\beta,k,q)\in B\times[N]\times[N].
\]
Put
\[
F(\ell)
:=
\prod_{\beta\in B}
\prod_{q=1}^N
\mu_{q,\ell_qt_{\beta,q}H_q}(V_q).
\]
Then $0\le F\le1$, and
\eqref{eq:inductive-correlation} gives
$\int_LF\,dm_L>0$.  Hence
\[
\Phi_{B\times[N]}
\bigl((\widetilde z_{\beta k,q})_{\beta,k,q}\bigr)
=
\int_LF(\ell)^N\,dm_L(\ell)
>0.
\]
By continuity, there are neighborhoods
$\mathcal O_{\beta,q}\ni z_{\beta,q}$ such that
\begin{equation}\label{eq:stable-cloned-correlation}
z'_{\beta k,q}\in\mathcal O_{\beta,q}
\text{ for all }\beta,k,q
\quad\Longrightarrow\quad
\Phi_{B\times[N]}
\bigl((z'_{\beta k,q})_{\beta,k,q}\bigr)>0.
\end{equation}
By Lemma~\ref{lem:automatic-fiber-continuity}, every $A_q$ is
Hausdorff-continuous.  Since
$z_{\beta,q}\in A_q(x_\beta)$, for each $\beta\in B$ there is a
nonempty open neighborhood $S_\beta\subseteq W_\beta$ of $x_\beta$
such that
\begin{equation}\label{eq:phase-persistence}
A_q(y)\cap\mathcal O_{\beta,q}\ne\emptyset
\qquad
(y\in S_\beta,\ q\in[N]).
\end{equation}

Apply Theorem~\ref{thm:mixed-phase-local-realization} to the repeated
coordinate list $B\times[N]$.  Coordinate $(\beta,k)$ carries the
measure $\mu_k$, the phase
$
z_{\beta,k}\in A_k(x_\beta),
$
the source neighborhood $S_\beta$, and the target set $V_k$.
For this repeated list, the target rectangle has mass exactly
$
\Phi_B\bigl((z_{\beta,q})_{\beta,q}\bigr)>0.
$
Therefore, Theorem~\ref{thm:mixed-phase-local-realization} gives points $y_{\beta,k}\in S_\beta$ such that
\[
R
:=
\left\{
g\in G:
gy_{\beta,k}\in V_k
\text{ for every }(\beta,k)\in B\times[N]
\right\}
\]
has positive upper $\Fcal$-density.  In particular, $R$ is infinite,
so we may choose
$
g_*\in R\setminus E_0.
$

For every $(\beta,k)\in B\times[N]$, choose a nonempty open
neighborhood $W_{\beta k}$ of $y_{\beta,k}$ such that
$
W_{\beta k}
\subseteq
S_\beta\cap g_*^{-1}V_k,
$
and put $x_{\beta k}:=y_{\beta,k}$.  By
\eqref{eq:phase-persistence}, choose
\[
z_{\beta k,q}
\in
A_q(x_{\beta k})\cap\mathcal O_{\beta,q},
\qquad
q\in[N].
\]
Then the required nesting property holds, while
\eqref{eq:stable-cloned-correlation} gives
\eqref{eq:child-phase-correlation}.  This completes the inductive step.

We now iterate the inductive step.  Set
$
B_n:=[N]^n$, $n\ge0$,
with $B_0=\{\emptyset\}$, and start from
\[
x_\emptyset:=x_0,
\qquad
W_\emptyset:=X,
\qquad
z_{\emptyset,q}:=z_q^0.
\]
At stage $n$, apply the inductive step with
\[
B=B_n
\qquad\text{and}\qquad
E_0=\{g_1,\ldots,g_n\}.
\]
This produces pairwise distinct elements
$g_1,g_2,\ldots$ and nonempty open sets satisfying
\[
W_{a_1\cdots a_n}
\subseteq
\bigcap_{j=1}^n g_j^{-1}V_{a_j}\qquad\text{for every $n\ge1$ and every
$(a_1,\ldots,a_n)\in[N]^n$.}
\]
Then $J:=\{g_n:n\ge1\}.$ is an infinite independence set for
$(V_1,\ldots,V_N)$.  Since the neighborhoods $V_q$ were arbitrary,
$\mathbf w\in\IT_N(X,G)$.
\end{proof}

\begin{lemma}
\label{lem:finite-packet-off-diagonal}
For every  $N\in\mb N$ with $2\le N\le \tdeg_G(X)$ and every $x_0\in X$,
there exist ergodic measures
$
\mu_1,\ldots,\mu_N\in\M_G^e(X),
$
not necessarily distinct, and
$
z_q\in A_{\mu_q}(x_0),$ $q\in[N]$
such that the associated mixed compact-phase joining
$
\Lambda_{x_0}^{(N)}\in\Prob(X^N)
$
satisfies
\[
\Lambda_{x_0}^{(N)}
\bigl(
X^N\setminus\Delta_{\mathrm{fat}}^N(X)
\bigr)
>0.
\]
\end{lemma}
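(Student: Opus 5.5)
The plan is to reproduce the full-packet construction from the proof of Theorem~\ref{thm:converse-degree-bound}, with the hypothesis $M\ge r$ replaced by $N\le\tdeg_G(X)$, and then to truncate the packet to exactly $N$ coordinates. By Theorem~\ref{thm:general-degree-decomposition}, $\tdeg_G(X)=\sum_{\mu}\iota_\mu b_\mu$ in the extended sense. Since $N\le\tdeg_G(X)$, we can choose finitely many distinct ergodic measures $\mu_1,\dots,\mu_\ell$, integers $1\le\iota_i'\le\iota_{\mu_i}$ and $1\le b_i'\le b_{\mu_i}$ with $\sum_{i}\iota_i'b_i'=N$. The cases where some $\iota_\mu$ or $b_\mu$ is infinite, or where there are infinitely many ergodic measures, are handled the same way: take partial packets and decrease the last block until the total is exactly $N$. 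This is a purely combinatorial choice and presents no difficulty.

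Next fix $x_0$ and $y_0=\pi(x_0)$. By Corollary~\ref{cor:automatic-phase-persistence}, $A_{\mu_i}(x_0)=p_{\mu_i}^{-1}(y_0)$, and this fiber has at least $\iota_i'$ distinct points $z_{i,1},\dots,z_{i,\iota_i'}$. Choose lifts $t_{i,j}$. The $N$-tuple of measures lists $\mu_i$ repeated $b_i'$ times for each pair $(i,j)$, and the phase $t^0$ repeats $t_{i,j}$ in the corresponding coordinates. As in \eqref{eq:full-packet-phase-joining}, the joint phase group $L^{(N)}$ has a common $K_i$-coordinate $u_i$ across all copies of $\mu_i$. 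Hence
\[
\Lambda_{t^0}=\int_{L^{(N)}}\bigotimes_{i}\bigotimes_{j\le\iota_i'}\mu_{i,u_it_{i,j}H_i}^{\otimes b_i'}\,dm_{L^{(N)}}(u).
\]

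The off-diagonal mass is then estimated exactly as before. Each map $u\mapsto u_it_{i,j}H_i$ has law $m_{\mu_i}$, so for a.e.\ $u$ every phase lies in a good conull set. On that set $\mu_{i,z}$ has exactly $b_{\mu_i}\ge b_i'$ atoms, is carried by $E_i\cap\kappa_i^{-1}(\{z\})$, and the disjoint invariant sets $E_i$ separate the different measures. Coordinates in distinct blocks never collide: for a fixed $i$ and different $j$, the points $u_it_{i,j}H_i$ are distinct, so the conditional measures live on disjoint fibers, while different $i$ are separated by the sets $E_i$. Within a block, $\mu_{i,z}^{\otimes b_i'}$ gives positive mass to pairwise distinct tuples, because $\mu_{i,z}$ has at least $b_i'$ atoms. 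Integrating over $u$ gives $\Lambda_{t^0}(X^N\setminus\Delta^N_{\mathrm{fat}}(X))>0$.

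The main obstacle is the infinite case. When $b_{\mu}=\infty$, the support points need not be atoms, but positivity still holds: a measure with at least $b'$ support points admits $b'$ disjoint open sets of positive mass, as in Lemma~\ref{lem:residual-multiplicity}. When $\iota_\mu=\infty$, we must still select $\iota'$ distinct fiber points, which is possible because the fiber is infinite. When there are infinitely many ergodic measures, we use only finitely many of them. In each of these cases the mutual singularity and fiber-disjointness arguments above go through unchanged.
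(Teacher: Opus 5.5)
Your overall strategy is the paper's. You build phases $z_{i,j}\in p_{\mu_i}^{-1}(y_0)=A_{\mu_i}(x_0)$ and repeat each phase in several coordinates carrying $\mu_i$. You then prove off-diagonal positivity block by block: distinct $\kappa_i$-fibers separate blocks with the same $i$, and disjoint invariant sets $E_i$ separate blocks with different $i$. The step that fails is the combinatorial one you call ``a purely combinatorial choice''. You require $1\le\iota_i'\le\iota_{\mu_i}$ and $1\le b_i'\le b_{\mu_i}$ with $\sum_i\iota_i'b_i'=N$, and your displayed joining uses the same exponent $b_i'$ for every $j$. Such a choice need not exist. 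Take a uniquely ergodic system with $(\iota_\mu,b_\mu)=(2,2)$, which exists by Theorem~\ref{thm:finite-profile-realization}. Then $\tdeg_{\mathbb Z}(X)=4$, and for $N=3$ the only attainable values of $\iota'b'$ are $1,2,4$. Your remedy of ``decreasing the last block'' produces blocks of unequal size for the same $i$, which is not of the form you wrote down.

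There are two ways to repair this. The first is what the paper does: let the block size depend on the pair $(i,j)$. Choose $1\le J_i\le\iota_{\mu_i}$ and $1\le n_{i,j}\le b_{\mu_i}$ with $\sum_i\sum_{j\le J_i}n_{i,j}=N$, and use $\mu_{i,u_it_{i,j}H_i}^{\otimes n_{i,j}}$ in the integrand. Such a choice always exists by a greedy selection, since $N\le\sum_\mu\iota_\mu b_\mu$ and that sum is made of blocks of size $b_\mu$. The second is to make your ``truncate'' idea precise. Build a uniform packet of size $M\ge N$, then project $\Lambda_{t^0}$ onto $N$ of its coordinates. The restriction $L^{(M)}\to L^{(N)}$ is a surjective continuous homomorphism, so it pushes Haar measure to Haar measure. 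Hence the projection is again the mixed phase joining of the sub-list, and pairwise distinctness of coordinates survives projection. With either fix the rest of your argument goes through. Your handling of $b_\mu=\infty$, via disjoint open sets of positive mass rather than atoms, is slightly more careful than the paper's wording.
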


\begin{proof}
Since $N\le d$ and $N$ is finite, the definition of the extended
sum gives distinct ergodic measures
$
\mu_1,\ldots,\mu_s
$
and integers
$
1\le J_i\le\iota_{\mu_i},$  $
1\le n_{i,j}\le b_{\mu_i}$, for
$1\le j\le J_i$
such that
\[
\sum_{i=1}^s\sum_{j=1}^{J_i}n_{i,j}=N.
\]
Write
\[
Z_i=K_i/H_i,
\qquad
\kappa_i=\kappa_{\mu_i},
\qquad
m_i=m_{\mu_i},
\qquad
b_i=b_{\mu_i}.
\]

Put $y_0:=\pi(x_0)$.  By Corollary \ref{cor:automatic-phase-persistence}, choose distinct phases
\[
z_{i,1},\ldots,z_{i,J_i}
\in
p_{\mu_i}^{-1}(y_0)
=
A_{\mu_i}(x_0),
\]
and write $z_{i,j}=t_{i,j}H_i$.  For every pair $(i,j)$, take
$n_{i,j}$ copies of $\mu_i$, all carrying the phase $z_{i,j}$, and
let $\Lambda_{x_0}^{(N)}$ be the associated mixed phase joining.  Thus
\[
\Lambda_{x_0}^{(N)}
=
\int_{L^{(N)}}\Xi_u\,dm_{L^{(N)}}(u),
\qquad
\Xi_u
:=
\bigotimes_{i=1}^s
\bigotimes_{j=1}^{J_i}
\mu_{i,u_it_{i,j}H_i}^{\otimes n_{i,j}},
\]
where $u_i$ is the common $K_i$-coordinate of all copies carrying
$\mu_i$.

The remainder follows from the packet-separation argument used in the
proof of Theorem~\ref{thm:converse-degree-bound}.  Indeed, for
$m_{L^{(N)}}$-a.e. $u$, each conditional measure
$\mu_{i,u_it_{i,j}H_i}$ has support cardinality
$b_i\ge n_{i,j}$, and hence its $n_{i,j}$-fold product assigns
positive mass to
$
X^{n_{i,j}}
\setminus
\Delta_{\mathrm{fat}}^{n_{i,j}}(X).
$
Different blocks cannot collide: blocks with the same $i$ and
different $j$'s are carried by distinct $\kappa_i$-fibers, whereas
blocks with different $i$'s are carried by pairwise disjoint invariant
full-measure sets.  Consequently,
\[
\Xi_u
\bigl(
X^N\setminus\Delta_{\mathrm{fat}}^N(X)
\bigr)
>0
\]
for $m_{L^{(N)}}$-a.e. $u$.  Integrating over $L^{(N)}$
gives
\[
\Lambda_{x_0}^{(N)}
\bigl(
X^N\setminus\Delta_{\mathrm{fat}}^N(X)
\bigr)
>0.\qedhere
\]
\end{proof}

\subsection{Proof of Theorem \ref{thm:intro-it-tuples}} 
By \eqref{eq:maximal-pattern-IN-rank}, we immediately obtain the following lemma.
\begin{lemma}\label{lem:IT-entropy-lower-bound}
If $(X,G)$ admits an essential IT $N$-tuple, then
\[
h_{\mathrm{top}}^*(X,G)\ge\log N.
\]
\end{lemma}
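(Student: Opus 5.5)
The lemma follows from the tuple characterization \eqref{eq:maximal-pattern-IN-rank} together with the inclusion $\IT_N(X,G)\subseteq\IN_N(X,G)$ recorded before Definition~\ref{def:IN-IT-tuples}. The plan is to push an essential IT $N$-tuple into the set over which the supremum in \eqref{eq:maximal-pattern-IN-rank} is taken, and then use that $\log$ is monotone.

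First I would fix an essential IT $N$-tuple $\mathbf x\in\IT_N(X,G)\setminus\Delta_{\mathrm{fat}}^N(X)$ with $N\ge2$. The inclusion is immediate from the definitions. An infinite independence set $J$ for $(U_1,\ldots,U_N)$ has finite subsets of every cardinality. Each nonempty finite subset of an independence set is again an independence set, since the defining condition only involves nonempty finite $F\subseteq J$. Hence $\mathbf x\in\IN_N(X,G)$.

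Being essential is a property of the tuple alone, so $\mathbf x$ is still off the fat diagonal. Therefore
\[
\IN_N(X,G)\setminus\Delta_{\mathrm{fat}}^N(X)\neq\emptyset .
\]
This says exactly that $N$ belongs to the set over which the supremum in \eqref{eq:maximal-pattern-IN-rank} is taken. That supremum is therefore at least $N$, and taking logarithms gives $h_{\mathrm{top}}^*(X,G)\ge\log N$. The convention $\log\infty=\infty$ covers the case where the supremum is infinite.

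I do not expect a real obstacle. The only point needing care is that \eqref{eq:maximal-pattern-IN-rank} is quoted from \cite{KerrLi2007,HuangYe2009} for essential IN-tuples. This is why it matters that the IT-tuple is essential and that essentiality is preserved when the tuple is viewed as an IN-tuple.
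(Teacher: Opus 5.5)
Your proposal is correct and takes the same route as the paper. The paper obtains the lemma directly from the tuple characterization \eqref{eq:maximal-pattern-IN-rank} together with the inclusion $\IT_N(X,G)\subseteq\IN_N(X,G)$; you additionally spell out why an infinite independence set gives independence sets of every finite cardinality, and that essentiality is unaffected.
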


\begin{proof}[Proof of Theorem~\ref{thm:intro-it-tuples}]
Put $d:=\tdeg_G(X)$.
Let $N\in\mb N$  with $2\le N\le d$.  By
Theorem~\ref{thm:intro-degree} and
Lemma~\ref{lem:finite-packet-off-diagonal}, there is a mixed phase joining
$\Lambda_{x_0}^{(N)}$ compatible with some $x_0\in X$ and satisfying
\[
\Lambda_{x_0}^{(N)}
\bigl(
X^N\setminus\Delta_{\mathrm{fat}}^N(X)
\bigr)
>0.
\]
Its support therefore contains an essential $N$-tuple.  By
Proposition~\ref{prop:mixed-phases-produce-IT}, this tuple is an IT
$N$-tuple.

If $d=1$, the entropy estimate is trivial.  If
$2\le d<\infty$, Lemma~\ref{lem:IT-entropy-lower-bound}, applied with
$N=d$, gives
\[
h_{\mathrm{top}}^*(X,G)\ge\log d.
\]
If $d=\infty$, then for every finite $N\ge2$ the preceding argument
gives an essential IT $N$-tuple, and hence
$
h_{\mathrm{top}}^*(X,G)\ge\log N.
$
Letting $N\to\infty$, we obtain
\[
h_{\mathrm{top}}^*(X,G)=\infty.\qedhere
\]
\end{proof}

\subsection{Exact uniquely ergodic realizations of finite sequence entropy}
\label{subsec:unique-ergodic-virtually-abelian}

We apply Theorem~\ref{thm:intro-it-tuples} to answer both parts of
\cite[Question~5.1]{GomezLeonTorresMunozLopez2026}.  The generalized
Morse system settles the $\mathbb Z$-action in \cite[Proposition~2.9]{GomezLeonTorresMunozLopez2026}, and
coordinate products followed by finite-index induction settle \cite[Theorem~1.2]{GomezLeonTorresMunozLopez2026}.

For $m\ge2$, let $(M_m,\sigma)$ be the generalized $m$-Morse subshift
associated with the constant-length substitution on
$\mathbb Z/m\mathbb Z$ given by
\[
 \tau_m(j):=j\,(j+1)\cdots(j+m-1),
\]
where the symbols are read modulo $m$.  This is a minimal uniquely
ergodic zero-entropy subshift; see, for example,
\cite[Chapter~5]{Queffelec2010}.  Maass and Shao
\cite[Section~6.2, Example~2]{MaassShao2007} exhibited a factor tower
\begin{equation}\label{eq:Morse-factor-tower}
 (M_m,\sigma)
 \xrightarrow{\ \pi_m\ }
 (Y_m,S_m)
 \xrightarrow{\ \rho_m\ }
 ((M_m)_{\mathrm{eq}},\sigma_{\mathrm{eq}}),
\end{equation}
where $\pi_m$ is exactly $m$-to-one, $(Y_m,S_m)$ is null,
$\rho_m$ is asymptotic, and $\rho_m\circ\pi_m$ is the maximal
equicontinuous factor map.

\begin{lemma}\label{lem:Morse-exact-degree-independence}
For every $m\ge2$,
\[
 \tdeg_{\mathbb Z}(M_m)=m,
 \qquad
 h_{\mathrm{top}}^*(M_m,\mathbb Z)=\log m.
\]
Moreover,
\[
 \IT_m(M_m,\mathbb Z)
 \setminus\Delta_{\mathrm{fat}}^m(M_m)\ne\emptyset,
 \qquad
 \IN_{m+1}(M_m,\mathbb Z)
 \subseteq\Delta_{\mathrm{fat}}^{m+1}(M_m).
\]
\end{lemma}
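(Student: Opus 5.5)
The strategy is to squeeze $\tdeg_{\mathbb Z}(M_m)$ between two bounds coming from independence theory: an upper bound from the fibre structure of the Maass--Shao tower \eqref{eq:Morse-factor-tower}, and a lower bound from Theorem~\ref{thm:intro-it-tuples}, closed up by \eqref{eq:maximal-pattern-IN-rank}.

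\textbf{Upper bound on IN-tuples.} I will first show $\IN_{m+1}(M_m,\mathbb Z)\subseteq\Delta^{m+1}_{\mathrm{fat}}(M_m)$. Since $(Y_m,S_m)$ is null, every IN pair of $Y_m$ lies on the diagonal. IN-tuples map to IN-tuples under factor maps. Hence an IN $(m+1)$-tuple $(x_1,\ldots,x_{m+1})$ of $M_m$ has all images $\pi_m(x_i)$ equal to a single point $y$: each pair $(\pi_m(x_i),\pi_m(x_j))$ is an IN pair of $Y_m$, hence diagonal. The fibre $\pi_m^{-1}(y)$ has exactly $m$ points, so two of the $x_i$ coincide. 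By \eqref{eq:maximal-pattern-IN-rank} this gives $h^*_{\mathrm{top}}(M_m,\mathbb Z)\le\log m$.

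\textbf{Lower bound on the degree.} I will next show $\tdeg_{\mathbb Z}(M_m)\ge m$. Let $\mu$ be the unique invariant measure and $\nu$ the measure on the base. Because $\rho_m$ is asymptotic, it is a.e. one-to-one on the base; I expect this to follow from the Maass--Shao description, where $\rho_m$ is an almost one-to-one extension of an odometer with regular fibres. So, after passing to the intermediate factor, for $\nu$-a.e.\ $y$ the fibre $(\rho_m\circ\pi_m)^{-1}(y)$ consists of exactly $m$ points. The rotation $j\mapsto j+1$ on symbols commutes with $\sigma$, preserves $\mu$, and acts transitively on the $\pi_m$-fibres. It therefore preserves the conditional measures $\mu_y$ over the base, so these are uniform on all $m$ points of the fibre. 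This gives $|\supp\mu_y|=m$ a.e., and hence $\tdeg_{\mathbb Z}(M_m)=m$. Indeed, the support is at most $m$ because it is contained in a fibre of cardinality $m$ a.e.

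\textbf{Conclusion.} Theorem~\ref{thm:intro-it-tuples} with $N=m=d$ now yields an essential IT $m$-tuple and $h^*_{\mathrm{top}}\ge\log m$. Combined with the upper bound, this gives equality.

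\textbf{Main obstacle.} The step I expect to be hardest is the a.e.\ injectivity of $\rho_m$ with respect to $\nu$, since asymptoticity alone need not imply it. If the cited example does not provide it directly, I would instead invoke the known fact that $M_m$ is a regular Toeplitz-type (group) extension: $(M_m,\sigma)$ is an $m$-point skew extension $\mathbb Z/m\mathbb Z$ over a regular Toeplitz factor. Then $\mu$ disintegrates over that factor as Haar measure on $\mathbb Z/m\mathbb Z$, and the Toeplitz factor is a.e.\ one-to-one over the odometer.
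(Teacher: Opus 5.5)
Your upper bound is correct and takes a different route from the paper. You obtain $\IN_{m+1}(M_m,\mathbb Z)\subseteq\Delta_{\mathrm{fat}}^{m+1}(M_m)$ directly from four facts:
\begin{itemize}
\item coordinate pairs of an IN tuple are IN pairs;
\item factor maps send IN pairs to IN pairs;
\item the null system $Y_m$ has only diagonal IN pairs, by \eqref{eq:maximal-pattern-IN-rank} with $h^*_{\mathrm{top}}(Y_m,\mathbb Z)=0$;
\item every $\pi_m$-fibre has exactly $m$ points.
\end{itemize}
The paper instead cites Maass--Shao for $h^*_{\mathrm{top}}(M_m,\mathbb Z)=\log m$ and then reads off the absence of essential IN $(m+1)$-tuples. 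Your version is more self-contained, since it needs only the qualitative features of the tower.

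The gap is in the degree computation, which hinges on $\rho_m$ being measure-theoretically one-to-one. You assert this without proof. You defer it to an unspecified ``Maass--Shao description'' or to an imported regular-Toeplitz structure, and you claim that asymptoticity alone need not imply it. That claim is mistaken, and asymptoticity is exactly what closes the step in the paper. Disintegrate $\eta_m:=(\pi_m)_*\mu_m=\int(\eta_m)_z\,d\lambda_m(z)$ over $\rho_m$ and form the relative product $\xi:=\int(\eta_m)_z\otimes(\eta_m)_z\,d\lambda_m(z)$. It is $S_m\times S_m$-invariant and concentrated on the fibre relation of $\rho_m$, where $d(S_m^ny,S_m^ny')\to0$. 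Invariance and dominated convergence then give $\int d\,d\xi=\int d(S_m^ny,S_m^ny')\,d\xi(y,y')\to0$. Hence $\xi$ lives on the diagonal, and $(\eta_m)_z$ is a Dirac mass for a.e.\ $z$.

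This measure-theoretic statement is all you need. The topological claim that a.e.\ fibre of $\rho_m\circ\pi_m$ has exactly $m$ points is unnecessary, since it suffices that $\mu_y$ is carried by a single $\pi_m$-fibre.

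There is also a smaller issue. Your uniformity argument via the symbol rotation $j\mapsto j+1$ presupposes that the $\pi_m$-fibres are exactly its orbits, which is not part of the stated tower. The same goes for your Toeplitz fallback, which additionally needs regularity of the difference system. The paper avoids this: with $u_y$ the uniform measure on $\pi_m^{-1}(y)$, the measure $\int u_y\,d\eta_m(y)$ is invariant and therefore equals $\mu_m$ by unique ergodicity. This uses only that $\pi_m$ is exactly $m$-to-one.

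With these two repairs, your conclusion via Theorem~\ref{thm:intro-it-tuples} with $N=d=m$ goes through.
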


\begin{proof}
Let $\mu_m$ be the unique invariant measure of $M_m$ and put
$\eta_m:=(\pi_m)_*\mu_m$.  Since $\pi_m$ is exactly $m$-to-one, the
map from $Y_m\ni
y\mapsto u_y:=\frac1m\sum_{x\in\pi_m^{-1}(y)}\delta_x\in \Prob(M_m)$
is an equivariant Borel map.  Hence
$\int u_y\,d\eta_m(y)$ is $\sigma$-invariant and, by unique
ergodicity, equals $\mu_m$.  Thus the conditional measures of
$\mu_m$ over $\pi_m$ are uniform on the $m$-point fibers.

It remains to observe that the asymptotic factor
$\rho_m:Y_m\to (M_m)_{\mathrm{eq}}$ is measure-theoretically
one-to-one.  Indeed, if
$
\eta_m=\int(\eta_m)_z\,d\lambda_m(z)
$
is the disintegration over $\rho_m$, then the relative product
$
\xi:=\int(\eta_m)_z\otimes(\eta_m)_z\,d\lambda_m(z)
$
is $S_m\times S_m$-invariant and supported on the fiber relation of
$\rho_m$.  Since $\rho_m$ is asymptotic,
$
\lim_{n\to\infty}d(S_m^ny,S_m^ny')=0
$
for $\xi$-a.e.\ $(y,y')$.  Invariance and dominated convergence give
\[
\int d(y,y')\,d\xi(y,y')
=
\int d(S_m^ny,S_m^ny')\,d\xi(y,y')
\to0\qquad\text{ as }n\to\infty.
\]
Hence $\xi$ is concentrated on the diagonal, and therefore
$(\eta_m)_z$ is a Dirac measure for $\lambda_m$-a.e.\ $z$.

Composing the two disintegrations, the conditional measures of
$\mu_m$ over the maximal equicontinuous factor have exactly $m$
support points almost everywhere.  Consequently,
\[
\tdeg_{\mathbb Z}(M_m)=m.
\]
By Theorem~\ref{thm:intro-it-tuples}, $M_m$ admits an essential IT
$m$-tuple.  On the other hand, Maass and Shao \cite{MaassShao2007} proved that
$
h_{\mathrm{top}}^*(M_m,\mathbb Z)=\log m.
$
Hence \eqref{eq:maximal-pattern-IN-rank} rules out essential IN $(m+1)$-tuples.  Since $M_m$ is an
infinite minimal subshift, it is free.
\end{proof}
Let $H\triangleleft G$ be a finite-index normal subgroup, and let
$(X,H)$ be a free minimal uniquely ergodic zero-entropy
finite-alphabet $H$-subshift.  Define
\[
G\times_H X
:=
(G\times X)/\sim,
\]
where
\[
(gh,x)\sim(g,hx)
\qquad
(g\in G,\ h\in H,\ x\in X),
\]
and let $G$ act on $G\times_H X$ by
\[
a\cdot[g,x]:=[ag,x]
\qquad
(a,g\in G,\ x\in X).
\]
\begin{lemma}
\label{lem:finite-index-induction}
 Put
$
    \widetilde X:=G\times_H X.
$
Then $(\widetilde X,G)$ is conjugate to a free minimal uniquely ergodic
zero-entropy finite-alphabet $G$-subshift.

Moreover, if $X$ admits an essential IT $m$-tuple and no essential
IN $(m+1)$-tuple, then the same is true for $\widetilde X$.
\end{lemma}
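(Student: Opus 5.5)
We will work with a fixed transversal. Choose coset representatives $e=s_1,\ldots,s_k$ of $G/H$, with $k=[G:H]$. Every element of $G$ is then uniquely $s_ih$. The map $X\times[k]\to\widetilde X$, $(x,i)\mapsto[s_i,x]$, is a homeomorphism, so $\widetilde X$ is a compact metric space, namely a disjoint union of $k$ copies of $X$. The $G$-action reads $a\cdot(x,i)=(h(a,i)x,\,j(a,i))$, where $as_i=s_{j(a,i)}h(a,i)$ with $h(a,i)\in H$.

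\textbf{Subshift structure.} Let $X\subseteq\mathcal A^H$ and put $\widetilde{\mathcal A}:=\mathcal A\times[k]$. Define $\Theta:\widetilde X\to\widetilde{\mathcal A}^G$ by
\[
\Theta([g,x])(a):=\bigl(x(h),\,i\bigr),\qquad\text{where } a^{-1}g=s_ih .
\]
One checks that $\Theta$ is well defined on $\sim$-classes, continuous, equivariant for the shift $(b\omega)(a)=\omega(b^{-1}a)$, and injective, since the second coordinate recovers the coset and the first recovers $x$. Hence $\widetilde X$ is conjugate to a finite-alphabet $G$-subshift.

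\textbf{Dynamical properties.}
\begin{itemize}[leftmargin=2.2em]
\item \emph{Freeness.} If $a[g,x]=[g,x]$, then $g^{-1}ag\in H$ fixes $x$, so $a=e$.
\item \emph{Minimality.} A closed $G$-invariant set meeting the copy $\{[e,x]\}$ meets it in a closed $H$-invariant set, which is all of $X$. It then contains every copy by translation.
\item \emph{Unique ergodicity.} A $G$-invariant measure restricts on each copy to an $H$-invariant measure; for the copy $s_iX$ this uses normality, since $H$ acts there through the automorphism $h\mapsto s_i^{-1}hs_i$. Each restriction is therefore a multiple of the unique $\mu_X$ transported. $G$ permutes the copies transitively, so the weights are equal to $1/k$.
\item \emph{Zero entropy.} Choose a Følner sequence $(F_n)$ of $H$. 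The sets $\bigcup_i s_iF_n$ form a Følner sequence of $G$; alternatively one can use the standard fact $h_{\mathrm{top}}(\widetilde X,G)=\frac1k h_{\mathrm{top}}(\widetilde X,H)$. As an $H$-system, $\widetilde X$ is a disjoint union of $k$ systems, each conjugate to $X$ via an automorphism of $H$. Entropy is invariant under that twist and additive (as a maximum) over finite disjoint unions, so the entropy is $0$.
\end{itemize}

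\textbf{Tuples.} The key step is the comparison of IT/IN tuples for $G$ on $\widetilde X$ with those for $H$ on $\widetilde X$. We plan two facts.

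(a) A tuple is IT (resp. IN) for $G$ if and only if it is IT (resp. IN) for $H$. The direction $H\Rightarrow G$ is immediate, because an independence set in $H$ is one in $G$. For the converse, given an infinite (resp. arbitrarily large finite) independence set $J\subseteq G$, pigeonhole over the $k$ right cosets $Hs_i$ gives an infinite (resp. large) subset $J'\subseteq Hs_i$. We then check that $J's_i^{-1}\subseteq H$ is an independence set for $(s_iU_1,\ldots,s_iU_N)$. This yields independence for neighborhoods of the translated tuple $s_i\mathbf x$, not of $\mathbf x$. Since $k$ is finite, we pigeonhole over $i$ across a shrinking neighborhood basis, obtaining a fixed $i$ such that $s_i\mathbf x$ is an $H$-IT (resp. IN) tuple. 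Translating back by $s_i^{-1}$, which normalizes $H$, intertwines the $H$-action with its twist by an automorphism, and independence sets are preserved under this twist. So $\mathbf x$ itself is an $H$-IT (resp. IN) tuple.

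(b) For the $H$-action, $\widetilde X$ is a disjoint clopen union of copies. An $H$-IN or $H$-IT tuple must have all coordinates in one copy: otherwise we take the neighborhoods inside distinct clopen copies, and a nonempty intersection $h^{-1}U_1\cap h'^{-1}U_2$ would require a point moving between copies, which is impossible since $H$ preserves each copy. Within a copy, the tuples agree with those of $X$ under the twisted conjugacy.

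Combining (a) and (b): an essential IT $m$-tuple of $X$, placed in the copy $[e,\cdot]$, gives one for $\widetilde X$. An essential IN $(m+1)$-tuple of $\widetilde X$ would lie in a single copy and pull back to an essential IN $(m+1)$-tuple of $X$, a contradiction.

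The main obstacle is the pigeonhole-over-cosets argument in (a), specifically the bookkeeping of how neighborhoods transform. For IN the requirement is arbitrarily large finite sets rather than a single infinite one, but the same pigeonhole, losing a factor $k$ in size, suffices.
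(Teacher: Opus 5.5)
Your proposal is correct in outline and follows the paper's proof closely:
\begin{itemize}
\item the splitting of $\widetilde X$ into $k$ clopen copies permuted transitively by $G$;
\item freeness and minimality directly, and unique ergodicity and zero entropy via the twisted $H$-actions $h\mapsto s_i^{-1}hs_i$ on the copies;
\item an explicit coding into a finite-alphabet shift. You record the coset label in the alphabet where the paper uses a blank symbol $\#$. Your formula for $\Theta$ is well defined for the convention $(h'x)(h)=x(hh')$. Under the convention $(h'x)(h)=x(h'^{-1}h)$, which is the one implicit in the paper's map $\Psi$, you should read off $x(h^{-1})$ instead;
\item for tuples, confinement of independence sets to a coset followed by a right translation into $H$.
\end{itemize}

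There is one genuine slip, in step (a). Take $J'\subseteq Hs_i$ and write $j=h_js_i$ with $h_j\in H$. Then $j^{-1}U_q=s_i^{-1}(h_j^{-1}U_q)$, so $\bigcap_{j\in F}j^{-1}U_{\omega(j)}=s_i^{-1}\bigl(\bigcap_{j\in F}h_j^{-1}U_{\omega(j)}\bigr)$. Hence $J's_i^{-1}$ is an $H$-independence set for the \emph{original} sets $(U_1,\ldots,U_N)$.

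Your claim that it is independent for $(s_iU_1,\ldots,s_iU_N)$ amounts to $s_i^{-1}J'$ being independent for $(U_1,\ldots,U_N)$. That does not follow from what you have. The correct computation is stronger and simplifies the argument: $\mathbf x$ itself is an $H$-IT (resp.\ IN) tuple. The second pigeonhole over a neighborhood basis and the twist back are then unnecessary.

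Your strategy survives either way. The extra steps would be needed, with $s_i^{-1}$ in place of $s_i$, had you used $s_i^{-1}J'$, which is independent for $(s_i^{-1}U_1,\ldots,s_i^{-1}U_N)$. So the ``main obstacle'' you anticipated disappears.

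The paper orders the tuple argument differently.
\begin{itemize}
\item It first places every $G$-IN tuple in a single copy, using that the finite factor $G/H$ is null, and translates that copy to $X_e$.
\item It then observes that when the $U_q$ lie in one copy, $s^{-1}U_1\cap t^{-1}U_1\ne\emptyset$ forces $s^{-1}H=t^{-1}H$.
\item Thus an entire independence set lies in one coset $Hg_0$, and $Jg_0^{-1}\subseteq H$ is an $H$-independence set of the same cardinality, with no pigeonhole loss.
\end{itemize}
Your order is equally valid: reduce from $G$ to $H$ for arbitrary tuples, localize using only that $H$ preserves each copy, and transfer from $X_{s_i}$ to $X$ by the twisted conjugacy. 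It avoids appealing to how IN-tuples behave under factor maps, at the modest price of the pigeonhole over cosets.
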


\begin{proof}
Choose a finite set $R\subseteq G$ of representatives for $G/H$ with $e\in R$.  Then
\[
\widetilde X
=
\bigsqcup_{r\in R}X_r,
\qquad
X_r:=\{[r,x]:x\in X\},
\]
and each $X_r$ is a clopen copy of $X$.
The group $G$ acts transitively on these copies, while the
stabilizer of the identity copy
$
X_e=\{[e,x]:x\in X\}
$
is $H$, whose action there is naturally conjugate to the original
action on $X$. 
Therefore, minimality of $(X,H)$ implies minimality of $(\widetilde X,G)$.
Moreover, if
$
a[g,x]=[g,x],
$
then there exists $h\in H$ such that
\[
ag=gh
\qquad\text{and}\qquad
hx=x.
\]
The freeness of the $H$-action gives $h=e$, and hence $a=e$.
Thus the induced $G$-action is free.

Since $X\subseteq\mathcal A^H$ for some finite alphabet
$\mathcal A$, we may choose a symbol $\#\notin\mathcal A$, and  define
\[
\Psi([g,x])(k)
:=
\begin{cases}
x(g^{-1}k),& k\in gH,\\
\#,& k\notin gH.
\end{cases}
\]
Then $\Psi$ is a well-defined injective continuous $G$-equivariant map
from $\widetilde X$ into $(\mathcal A\cup\{\#\})^G$.  Hence
$\Psi(\widetilde X)$ is a finite-alphabet $G$-subshift conjugate to
$(\widetilde X,G)$.

Let $\widetilde\mu$ be a $G$-invariant probability measure on
$\widetilde X$.  Consider the natural factor
$
    q:\widetilde X\to G/H.
$ Its projection via $q$ to the finite transitive system $G/H$
is necessarily the uniform measure.  Hence every component has
$\widetilde\mu$-measure $[G:H]^{-1}$.  The normalized restriction of
$\widetilde\mu$ to the identity component $X_e$ is $H$-invariant and
therefore, by unique ergodicity of $(X,H)$, is uniquely determined.
The measures on the remaining components are then determined by
$G$-invariance.  Thus $(\widetilde X,G)$ is uniquely ergodic.

We next verify that the induced system has zero topological entropy.
Since $H\triangleleft G$, each clopen copy $X_r$, $r\in R$, is
$H$-invariant.  Under the identification
$
X\to X_r,$ $x\mapsto[r,x]$
the restricted $H$-action on $X_r$ is obtained from the original
$H$-action on $X$ by the automorphism
$
h\mapsto r^{-1}hr.
$
Hence
\[
h_{\mathrm{top}}(X_r,H)=0
\qquad\text{for every }r\in R.
\]
Since
$
\widetilde X=\bigsqcup_{r\in R}X_r
$
is a finite union of closed $H$-invariant subsets, the finite-union
property of topological entropy gives
$
h_{\mathrm{top}}(\widetilde X,H)=0.
$
Finally, the finite-index formula
$
h_{\mathrm{top}}(\widetilde X,H)
=
[G:H]\,h_{\mathrm{top}}(\widetilde X,G)
$
yields
\[
h_{\mathrm{top}}(\widetilde X,G)=0.
\]

It remains to compare independence. 
Since $G/H$ is finite, it is null, and so every IN-tuple, and therefore every
IT-tuple, is contained in a single component.  After translating, we
may assume that this component is the identity component $X$.
Let $U_1,\ldots,U_N$ be neighborhoods contained in the identity
component $X_e$, and let $J\subseteq G$ be an independence set for
$(U_1,\ldots,U_N)$.  For $s,t\in J$ and $i,j\in[N]$, the independence
property gives
\[
s^{-1}U_i\cap t^{-1}U_j\ne\emptyset.
\]
Since
$
s^{-1}U_i\subseteq X_{s^{-1}H},
$ and $
t^{-1}U_j\subseteq X_{t^{-1}H},
$
and distinct components are disjoint, we must have
\[
s^{-1}H=t^{-1}H.
\]
Thus $J\subseteq Hg_0$ for some $g_0\in G$, and
$Jg_0^{-1}\subseteq H$ is an $H$-independence set of the same
cardinality.  Hence an essential IN $(m+1)$-tuple in $\widetilde X$
would give one in $X$.  Conversely, every $H$-independence set is also
a $G$-independence set, so an essential IT $m$-tuple in $X$ remains
one in $\widetilde X$.
\end{proof}

\begin{proof}[Proof of Theorem~\ref{thm:intro-unique-virtual-abelian}]
Let $H\triangleleft G$ be a finite-index normal subgroup with
$H\cong\mathbb Z^p$.  We first construct a free minimal uniquely
ergodic zero-entropy $H$-subshift with the required independence
properties, and then apply Lemma~\ref{lem:finite-index-induction}
to induce it to a $G$-subshift.

Let $(M_m,\sigma)$ be the generalized $m$-Morse subshift, and let $(S,\sigma_S)$ be a Sturmian subshift.  Recall that
$(S,\sigma_S)$ is free, minimal, uniquely ergodic, null, and has zero
topological entropy.  Put
\[
Z_{m,p}:=M_m\times S^{p-1},
\]
with the convention that $S^0$ is a point, and let
$H\cong\mathbb Z^p$ act coordinatewise:
\[
(n_1,\ldots,n_p)
(x,y_2,\ldots,y_p)
:=
\bigl(
\sigma^{n_1}x,
\sigma_S^{n_2}y_2,\ldots,
\sigma_S^{n_p}y_p
\bigr).
\]
With the coordinatewise $\mathbb Z^p$-action, $Z_{m,p}$ is naturally
conjugate to a finite-alphabet $\mathbb Z^p$-subshift.

Under the coordinatewise $\mathbb Z^p$-action, the orbit of
$(x,y_2,\ldots,y_p)$ is the product of the corresponding
one-dimensional orbits.  Hence minimality of the factors implies
minimality of $Z_{m,p}$.  Moreover, successive disintegration with
respect to the coordinate projections shows that every
$\mathbb Z^p$-invariant probability measure is
$
\mu_m\otimes\mu_S^{\otimes(p-1)},
$
so $Z_{m,p}$ is uniquely ergodic.  Finally, freeness follows
coordinatewise from the freeness of the one-dimensional factors.

Its unique invariant measure is
$\mu_m\otimes\mu_S^{\otimes(p-1)}$, which has zero entropy for the
coordinatewise $\mathbb Z^p$-action.  The variational principle
therefore gives
\[
h_{\mathrm{top}}(Z_{m,p},H)=0.
\]

We next determine the independence order of $Z_{m,p}$.
By Lemma~\ref{lem:Morse-exact-degree-independence},
$M_m$ admits an essential IT $m$-tuple.  Fixing arbitrary points in
the Sturmian coordinates and using the corresponding independence
set along the first coordinate axis
$
\{(n,0,\ldots,0):n\in J\}\subseteq\mathbb Z^p,
$
we obtain an essential IT $m$-tuple in $Z_{m,p}$.

On the other hand, suppose that
$
(z_1,\ldots,z_{m+1})
$
were an essential IN $(m+1)$-tuple in $Z_{m,p}$.  Its projection to
each Sturmian coordinate is an IN $(m+1)$-tuple.  Since the Sturmian
system is null, every such projected tuple is diagonal.  Hence all
$z_i$ have the same Sturmian coordinates.  Since the original tuple
is essential, their $M_m$-coordinates are therefore pairwise
distinct.  Projecting to the first coordinate would then give an
essential IN $(m+1)$-tuple in $M_m$, contradicting
Lemma~\ref{lem:Morse-exact-degree-independence}.  Thus
$(Z_{m,p},H)$ has no essential IN $(m+1)$-tuple.

We may now apply Lemma~\ref{lem:finite-index-induction} to
$H\triangleleft G$ and $(Z_{m,p},H)$.  It yields a free minimal
uniquely ergodic zero-entropy finite-alphabet $G$-subshift
$(X_{m,G},G)$ which admits an essential IT $m$-tuple and has no
essential IN $(m+1)$-tuple.  Hence, \eqref{eq:maximal-pattern-IN-rank} gives
\[
h_{\mathrm{top}}^*(X_{m,G},G)=\log m.
\]

Finally, if $G=\mathbb Z$, neither the product construction nor
finite-index induction is needed: one may simply take
$
X_{m,\mathbb Z}=M_m\subseteq\{0,1,\ldots,m-1\}^{\mathbb Z}.
$
\end{proof}
\section{Realization and sharpness of finite multiplicity profiles}
\label{sec:realization}

This section proves Theorem~\ref{thm:finite-profile-realization}: every
finite compact--residual multiplicity profile is realized by a
zero-entropy minimal almost one-to-one extension of an irrational circle
rotation. 
The proof of the realization theorem relies on two  powerful results.
First, the strengthened construction of Haupt and J\"ager provides,
over one common irrational circle rotation, totally strictly ergodic
blocks with arbitrary prescribed finite residual multiplicity; see
\cite[Section~5.1]{HauptJaeger2025}.  Second, the relative
Furstenberg--Weiss theorem of Downarowicz and Lacroix allows finitely many
such blocks to be amalgamated into a single minimal almost one-to-one
extension while preserving the invariant-measure simplex and the
corresponding measure-preserving systems; see
\cite[Theorems~2 and~3]{DownarowiczLacroix1998}.

Throughout the section, we fix a finite multiset
\[
\mathcal P
=
\multiset{
(\iota_1,b_1),\ldots,(\iota_\ell,b_\ell)
}
\]
of pairs of positive integers.

\subsection{Multiplicity profiles and auxiliary results}
\label{subsec:profile-inputs}
We summarize in the following theorem the features of the construction of
Haupt and J\"ager \cite{HauptJaeger2025} that will be used below.
\begin{theorem}
\label{thm:common-blocks}
There exist an irrational number $\alpha\in\Torus$ and, for every
$b\in\N$, a $\mb Z$-system $(B_b,S_b)$, a topological factor
map
\[
q_b:(B_b,S_b)\to(Y,R_\alpha),
\qquad
Y:=\Torus,
\qquad
R_\alpha(y):=y+\alpha,
\]
and a unique invariant probability measure $\nu_b$ such that:
\begin{enumerate}[label=\textup{(\roman*)},leftmargin=2.5em]
\item
$(B_b,S_b)$ is totally strictly ergodic\footnote{A homeomorphism
$S$ is called \emph{totally strictly ergodic} if $S^n$ is minimal
and uniquely ergodic for every $n\in\mathbb Z\setminus\{0\}$.};

\item
$(Y,R_\alpha)$ is the maximal equicontinuous factor of
$(B_b,S_b)$;

\item
there exist measurable maps
$
\gamma_{b,s}:Y\to B_b,
$ $
s\in[b],
$
whose values are pairwise distinct for $m_Y$-a.e. $y$, such
that
\begin{equation}\label{eq:block-disintegration}
\nu_b
=
\int_Y(\nu_b)_y\,dm_Y(y),
\qquad
(\nu_b)_y
=
\frac1b\sum_{s=1}^b\delta_{\gamma_{b,s}(y)}\qquad\text{for $m_Y$-a.e. $y$;}
\end{equation}

\item  $q_b$ realizes the maximal compact
factor of $(B_b,\nu_b,S_b)$;

\item
$h_{\mathrm{top}}(S_b)=0$.
\end{enumerate}
\end{theorem}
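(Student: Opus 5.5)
This theorem is a summary of the construction in \cite[Section~5.1]{HauptJaeger2025}, so the plan is to extract each item from that construction rather than prove anything from scratch. The only new arguments needed are for the measure-theoretic identifications in (iii) and (iv), which we derive from results already in the paper.

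First, we fix the data supplied by Haupt--J\"ager. Their strengthened construction gives a single irrational $\alpha$ and, for each $b\in\N$, a minimal almost automorphic extension $q_b:(B_b,S_b)\to(\Torus,R_\alpha)$. This extension is obtained by a Toeplitz/semicocycle-type construction over the rotation, with generic fibers of a prescribed combinatorial shape. Items (i), (ii) and (v) are stated there explicitly:
\begin{enumerate}[label=\textup{(\alph*)},leftmargin=2.2em]
\item total strict ergodicity;
\item $R_\alpha$ is the maximal equicontinuous factor, since $q_b$ is almost one-to-one onto an equicontinuous system;
\item zero topological entropy, coming from subexponential block complexity.
\end{enumerate}
So the first step is simply to quote these.

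Second, for (iii) we use the structure of the unique measure $\nu_b$. The construction provides Borel sections $\gamma_{b,1},\ldots,\gamma_{b,b}$ that are pairwise distinct almost everywhere and are permuted equivariantly under $S_b$ and $R_\alpha$. We then define
\[
u_y:=\tfrac1b\sum_{s}\delta_{\gamma_{b,s}(y)}.
\]
This is an equivariant Borel kernel, as in the proof of Lemma~\ref{lem:Morse-exact-degree-independence}. Hence $\int u_y\,dm_Y$ is invariant, and by unique ergodicity it equals $\nu_b$. Uniqueness of disintegration then yields \eqref{eq:block-disintegration}.

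Third, for (iv) we must show that the maximal compact factor of $(B_b,\nu_b,S_b)$ is exactly the rotation, with no extra finite compact phase. Let $\kappa$ be that factor and $p:Z\to Y$ the continuous map from Proposition~\ref{prop1}. Fibers of $p$ have constant cardinality $\iota$, and Proposition~\ref{prop:ergodic-degree-contribution} gives $\iota\cdot b'=b$, where $b'$ is the residual multiplicity. If $\iota>1$, then $Z$ would be a finite-to-one equicontinuous extension of $Y$, and hence $\nu_b$ would have a nontrivial rational-type eigenvalue or a finite cover of the rotation. We expect total strict ergodicity to exclude this: it rules out eigenvalues that are roots of unity, but not necessarily a connected finite cover.

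This step, ruling out a nontrivial finite compact extension of the rotation inside the Kronecker factor, is the main obstacle. The first approach is to rely on the Haupt--J\"ager statement that the only eigenvalues of $\nu_b$ are those of $R_\alpha$, i.e.\ that $\nu_b$ has the same discrete spectrum as $R_\alpha$. Failing that, we would use the explicit weak-mixing-type behaviour of the residual fiber permutation, which gives maximal sequence entropy $\log b$, together with Proposition~\ref{prop:residual-sequence-entropy}. Once $b'=b$ is known, $\iota=1$ follows from the identity $\iota b'=b$, so $q_b$ realizes the maximal compact factor.
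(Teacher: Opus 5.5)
Your overall route is the paper's. Items (i)--(iii) are quoted from the Haupt--J\"ager construction, and (iv) is closed by their result \cite[Section~5.2]{HauptJaeger2025} that the lifts have no eigenvalues other than those of $R_\alpha$, which is your ``first approach''. Your observation that total strict ergodicity excludes only root-of-unity eigenvalues, and not a connected finite cover such as $z\mapsto z+\alpha/2$ mapped onto $Y$ by $z\mapsto 2z$, is exactly why that citation is indispensable. Your kernel argument for \eqref{eq:block-disintegration} is a reasonable elaboration of what the source supplies.

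The bookkeeping $\iota b'=b$ is correct but not needed. If every eigenvalue has the form $e^{2\pi i n\alpha}$, ergodicity makes each eigenspace one-dimensional and spanned by $e^{2\pi i n q_b}$. Hence the Kronecker algebra is $q_b^{-1}\mathcal B(Y)$ directly. For $b=1$, item (iii) already makes $q_b$ a measure-theoretic isomorphism.

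The problems are in the justifications you attach to the citations.

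\begin{enumerate}[label=\textup{(\alph*)},leftmargin=2.2em]
\item The cited construction is not of Toeplitz/semicocycle type. It is a strengthened Anosov--Katok diffeomorphism over $R_\alpha$ together with its rescaled $b$-fold lifts. For $b\ge2$, $q_b$ therefore factors, up to a rotation of $Y$, through the $b$-to-one lift $B_b\to B_1$. So no fiber of $q_b$ is a singleton, and $q_b$ is \emph{not} almost one-to-one.

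\item Consequently your reason for (ii) fails. You must quote (ii) from \cite[Theorem~1.2 and Section~5]{HauptJaeger2025}, as the paper does. Alternatively, deduce it from (iv), obtained directly from the eigenvalue statement: the factor map $\Xeq\to Y$ is then a measure-theoretic isomorphism between minimal equicontinuous systems. Lemma~\ref{lem:homogeneous-factor-fibers}(b) then makes it a homeomorphism.

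\item ``Subexponential block complexity'' is a symbolic notion with no role in this smooth construction. The paper does not quote (v) from the source; it derives it. By (iii), $q_b$ is a measure-theoretic extension of the zero-entropy rotation whose conditional measures live on $b$ points. So the relative entropy vanishes and $h_{\nu_b}(S_b)=0$. Unique ergodicity and the variational principle then give $h_{\mathrm{top}}(S_b)=0$.
\end{enumerate}

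Finally, your fallback for (iv) is circular. Your own identity $\iota b'=b$, together with Proposition~\ref{prop:residual-sequence-entropy}, shows that $h^*_{\nu_b}(S_b)=\log b$ is equivalent to $\iota=1$. Establishing that sequence-entropy value would therefore require the same spectral information you are trying to avoid. The eigenvalue statement of \cite[Section~5.2]{HauptJaeger2025} is the only ingredient in your proposal that actually closes (iv).
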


\begin{proof}
Use the strengthened Anosov--Katok construction in
\cite[Section~5.1]{HauptJaeger2025}, which starts from a single
diffeomorphism over an irrational rotation $R_\alpha$ and makes all
iterates of all its finite lifts strictly ergodic.  Let $B_1$ be the
original system and, for $b\ge2$, let $B_b$ be its rescaled
$b$-fold lift.  Assertions \textup{(i)}, \textup{(ii)}, and
\textup{(iii)} follow from
\cite[Theorem~1.2 and Section~5, especially
equations~(5.5)--(5.6)]{HauptJaeger2025}

For $b=1$, the system is measure-theoretically isomorphic to the base
rotation by (iii).  For $b\ge2$,
\cite[Section~5.2]{HauptJaeger2025} shows that the lift has no
dynamical eigenvalues other than those of $R_\alpha$,
which proves \textup{(iv)}.

Finally, $q_b$ is a finite measure-theoretic extension of the
zero-entropy rotation, and therefore
$
h_{\nu_b}(S_b)=0.
$
Since $S_b$ is uniquely ergodic, the variational principle gives
$
h_{\mathrm{top}}(S_b)=0.
$
\end{proof}
The next lemma adds a cyclic compact coordinate of order $\iota$ to
the residual block $B_b$, thereby realizing the pair $(\iota,b)$.

\begin{lemma}
\label{lem:single-pair-realization}
Let $\iota,b\in\N$.  Put
\[
C_\iota:=\Z/\iota\Z,
\qquad
r_\iota(j):=j+1,
\]
and let $\upsilon_\iota$ be the uniform probability measure on
$C_\iota$.  Define
\[
\widehat X_{\iota,b}
:=
B_b\times C_\iota,
\qquad
\widehat T_{\iota,b}
:=
S_b\times r_\iota,
\qquad
\widehat\mu_{\iota,b}
:=
\nu_b\otimes\upsilon_\iota.
\]
Moreover, put
\[
Z_\iota:=Y\times C_\iota,
\qquad
m_\iota:=m_Y\otimes\upsilon_\iota,
\]
and define
\[
R_{\alpha,\iota}(y,j):=(R_\alpha(y),r_\iota(j)).
\]
Then the following statements hold.
\begin{enumerate}[label=\textup{(\roman*)},leftmargin=2.5em]
\item
The system
$(\widehat X_{\iota,b},\widehat T_{\iota,b})$
is strictly ergodic, with unique invariant probability measure
$\widehat\mu_{\iota,b}$.

\item
The map
\[
\widehat\kappa_{\iota,b}:
\widehat X_{\iota,b}\to Z_\iota,
\qquad
\widehat\kappa_{\iota,b}(x,j):=(q_b(x),j),
\]
is a factor map onto
$(Z_\iota,R_{\alpha,\iota})$ and represents the
measure-theoretic maximal compact factor of
$(\widehat X_{\iota,b},
\widehat\mu_{\iota,b},
\widehat T_{\iota,b})$.

\item
The disintegration of $\widehat\mu_{\iota,b}$ over
$\widehat\kappa_{\iota,b}$ is
\begin{equation}\label{eq:single-block-conditionals}
\widehat\mu_{\iota,b}
=
\int_{Y\times C_\iota}
\bigl((\nu_b)_y\otimes\delta_j\bigr)
\,d(m_Y\otimes\upsilon_\iota)(y,j).
\end{equation}
In particular, its conditional measures have exactly $b$ support
points for $m_\iota$-a.e. $(y,j)\in Z_\iota$.

\item
The factor map
\[
p_\iota:Z_\iota\to  Y,
\qquad
p_\iota(y,j):=y,
\]
has constant degree $\iota$, and
$
p_\iota\circ\widehat\kappa_{\iota,b}
=
q_b\circ\operatorname{pr}_{B_b}.
$

\item
$
h_{\mathrm{top}}(\widehat T_{\iota,b})=0.
$
\end{enumerate}
\end{lemma}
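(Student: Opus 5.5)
The plan is to exploit the product structure of $\widehat X_{\iota,b}=B_b\times C_\iota$ and the total strict ergodicity of $S_b$ from Theorem~\ref{thm:common-blocks}(i). I will prove (i) first, since the spectral assertion in (ii) depends on it. The remaining items (iii)--(v) are then essentially formal.

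For (i), minimality goes as follows. The orbit of $(x,0)$ under $\widehat T_{\iota,b}$ meets $B_b\times\{j\}$ in the set $\{S_b^{n\iota+j}x:n\in\Z\}$. Because $S_b^\iota$ is minimal, this set is dense in $B_b$, so the orbit is dense in $\widehat X_{\iota,b}$. For unique ergodicity, let $\widehat\mu$ be any invariant measure. Its projection to $C_\iota$ is invariant under $r_\iota$, hence uniform. The normalized restriction of $\widehat\mu$ to $B_b\times\{0\}$ is $S_b^\iota$-invariant, so unique ergodicity of $S_b^\iota$ forces it to be $\nu_b$. Invariance then determines $\widehat\mu$ on the other slices, since $(S_b^j)_*\nu_b=\nu_b$. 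This gives $\widehat\mu=\nu_b\otimes\upsilon_\iota$.

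For (ii), equivariance of $\widehat\kappa_{\iota,b}$ and the identity $(\widehat\kappa_{\iota,b})_*\widehat\mu_{\iota,b}=m_\iota$ are immediate. The first real step is to show that $(Z_\iota,R_{\alpha,\iota})$ is minimal. This holds because $(Y\times C_\iota,R_{\alpha,\iota})$ is a factor of the minimal system $\widehat X_{\iota,b}$, so $Z_\iota$ is a compact group rotation with the correct Haar measure. The main obstacle is maximality, that is, showing the compact factor is not larger. My plan is to work on the $L^2$ side and use that $\widehat\mu_{\iota,b}$ is a product measure. Since $L^2(\widehat\mu_{\iota,b})=L^2(\nu_b)\otimes L^2(\upsilon_\iota)$, every eigenfunction expands as $f=\sum_{j}f_j\otimes e_j$, where $e_j$ are the characters of $C_\iota$. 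Each $f_j$ is then an eigenfunction of $S_b$. By Theorem~\ref{thm:common-blocks}(iv), each $f_j$ is measurable with respect to $q_b^{-1}\cB(Y)$. The Koopman representation for a $\Z$-action is abelian, so $\mathcal H_{\mathrm c}$ is spanned by eigenfunctions, and all of them are therefore $\widehat\kappa_{\iota,b}$-measurable. Together with Zimmer's characterization $L^2(\cK)=\mathcal H_{\mathrm c}$, this identifies $\widehat\kappa_{\iota,b}$ as the maximal compact factor.

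For (iii), I will disintegrate $\widehat\mu_{\iota,b}=\nu_b\otimes\upsilon_\iota$ over the fibers $\widehat\kappa_{\iota,b}^{-1}(y,j)=q_b^{-1}(y)\times\{j\}$. By \eqref{eq:block-disintegration}, the measures $(\nu_b)_y\otimes\delta_j$ are supported in these fibers, and integrating them recovers $\widehat\mu_{\iota,b}$. Uniqueness of disintegration then gives \eqref{eq:single-block-conditionals}. Each such measure is $\frac1b\sum_s\delta_{(\gamma_{b,s}(y),j)}$, which has exactly $b$ support points almost everywhere. For (iv), the commuting identity is immediate from the definitions, and every fibre of $p_\iota$ is $\{y\}\times C_\iota$, which has $\iota$ points. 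For (v), I will apply the product formula $h_{\mathrm{top}}(S_b\times r_\iota)=h_{\mathrm{top}}(S_b)+h_{\mathrm{top}}(r_\iota)$ and use Theorem~\ref{thm:common-blocks}(v) together with $h_{\mathrm{top}}(r_\iota)=0$ for the finite rotation.
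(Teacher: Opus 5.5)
Your proposal is correct and follows the paper's argument. For (i) you use the return map $S_b^\iota$ on each slice $B_b\times\{j\}$ to get strict ergodicity, and for (iii)--(v) you read everything off from the product structure, exactly as the paper does. The only difference is in (ii): you justify maximality of $\widehat\kappa_{\iota,b}=q_b\times\id_{C_\iota}$ via the character expansion $f=\sum_j f_j\otimes e_j$, which reduces each $f_j$ to an $S_b$-eigenfunction and then invokes Theorem~\ref{thm:common-blocks}(iv). This makes explicit a step that the paper only asserts in one line.
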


\begin{proof}
Since $S_b^\iota$ is strictly ergodic, the standard cyclic-extension
argument shows that
$
\widehat T_{\iota,b}=S_b\times r_\iota
$
is strictly ergodic, with unique invariant measure
$
\widehat\mu_{\iota,b}
=
\nu_b\otimes\upsilon_\iota.
$
Indeed, the return map to each fiber
$B_b\times\{j\}$ is $S_b^\iota$.
Since the finite cyclic system $C_\iota$ is compact,
$
\widehat\kappa_{\iota,b}
=
q_b\times\id_{C_\iota}
$
represents the measure-theoretic maximal compact factor of the product.
The disintegration over $\widehat\kappa_{\iota,b}$ is
\[
\widehat\mu_{\iota,b}
=
\int_{Y\times C_\iota}
\bigl((\nu_b)_y\otimes\delta_j\bigr)
\,d(m_Y\otimes\upsilon_\iota)(y,j).
\]
By \eqref{eq:block-disintegration}, these conditional measures have
exactly $b$ support points  $m_\iota$-a.e. $(y,j)\in Z_\iota$.  The map
$p_\iota:Z_\iota\to Y$ plainly has degree $\iota$.  Finally,
\[
h_{\mathrm{top}}(\widehat T_{\iota,b})
=
h_{\mathrm{top}}(S_b)+h_{\mathrm{top}}(r_\iota)
=
0.\qedhere
\]
\end{proof}

We also use the following relative model theorem of Downarowicz and
Lacroix \cite{DownarowiczLacroix1998}.  Recall that a Borel subset of a topological system is
\emph{full} if it has measure one for every invariant probability
measure.

\begin{theorem}
\label{thm:DL}
Let
$
(Y,R_\alpha)
$
be an irrational circle rotation, and let
$
\widehat\pi:
(\widehat X,\widehat T)
\to
(Y,R_\alpha)
$
be a topological factor map.  Then there exist a minimal $\mb Z$-system $(X,T)$, an almost one-to-one topological factor map
$
\pi:(X,T)\to(Y,R_\alpha),
$
full Borel sets
$
\widehat X_0\subseteq\widehat X,
$ and $
X_0\subseteq X,
$
and a Borel bijection
$
\Phi:\widehat X_0\to X_0
$
such that
\[
\Phi\circ\widehat T=T\circ\Phi
\qquad\text{and}\qquad
\pi\circ\Phi=\widehat\pi.
\]
Moreover, the induced map
$
\Phi_*:
\M_{\widehat T}(\widehat X)
\to
\M_T(X)
$
is an affine weak* homeomorphism.  In particular, for every
$\widehat\mu\in\M_{\widehat T}(\widehat X)$,
$
\Phi:
(\widehat X,\widehat\mu,\widehat T)
\to
(X,\Phi_*\widehat\mu,T)
$
is an isomorphism.
\end{theorem}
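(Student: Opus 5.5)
This statement is not proved from scratch here; the plan is to read it off from \cite[Theorems~2 and~3]{DownarowiczLacroix1998} and check that the hypotheses and conclusions match the form needed in this section. The first step is to check the hypotheses. The base $(Y,R_\alpha)$ is an irrational circle rotation, hence minimal, aperiodic and uniquely ergodic. The map $\widehat\pi:(\widehat X,\widehat T)\to(Y,R_\alpha)$ is an arbitrary topological factor map, so $\widehat X$ need not be minimal. This is the generality in which the relative Furstenberg--Weiss theorem is formulated: any topological extension of a minimal non-periodic system has a minimal almost one-to-one extension of the same base that is Borel-isomorphic to it over the base, modulo sets that are null for every invariant measure.

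I would then recall the shape of the construction, to see where each asserted property comes from. Downarowicz and Lacroix discard from $\widehat X$ an invariant Borel set which is null for every $\widehat T$-invariant measure. They then re-embed the remainder $\widehat X_0$, by a Borel equivariant injection commuting with the projection to $Y$, into a new compact system $X$. The new system is built over a dense $G_\delta$ set of \emph{regular} fibers, which are made singletons; this gives the almost one-to-one property. Minimality of $X$ is arranged by forcing every orbit closure to contain a singleton fiber. Because $\Phi$ intertwines the actions and satisfies $\pi\circ\Phi=\widehat\pi$, the conclusions $\Phi\circ\widehat T=T\circ\Phi$ and $\pi\circ\Phi=\widehat\pi$ are built in. Fullness of $X_0$ then says that every $T$-invariant measure lives on $\Phi(\widehat X_0)$.

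The remaining assertions concern the measure simplices. Since $\widehat X_0$ and $X_0$ are full and $\Phi$ is a Borel bijection conjugating the actions, $\Phi_*$ is an affine bijection $\M_{\widehat T}(\widehat X)\to\M_T(X)$, with inverse $(\Phi^{-1})_*$. For each $\widehat\mu$, $\Phi$ is then a measure-theoretic isomorphism onto $(X,\Phi_*\widehat\mu,T)$ by definition. The only point not automatic from Borel considerations is that $\Phi_*$ is a weak$^*$ homeomorphism. This is the content of \cite[Theorem~3]{DownarowiczLacroix1998}, which states that the simplices are affinely homeomorphic via the induced map.

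I expect the main obstacle to be exactly this continuity of $\Phi_*$, since a Borel conjugacy need not induce a continuous map on measures. My first attempt would be to invoke the cited theorem directly. As a consistency check, I would note that compactness of both simplices means it suffices to prove continuity in one direction. In the applications of this section, where $\widehat X$ is a finite disjoint union of strictly ergodic blocks, $\M_{\widehat T}(\widehat X)$ is a finite-dimensional simplex, so any affine bijection onto $\M_T(X)$ is automatically a homeomorphism.
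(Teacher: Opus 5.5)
Your proposal is correct and takes essentially the paper's route: the paper also gives no proof from scratch, but checks that an irrational rotation is strictly ergodic, nonperiodic and of zero topological entropy, and then cites \cite[Theorem~3]{DownarowiczLacroix1998}, reading off the assertions about $\Phi_*$ from the Borel$^*$-isomorphism conclusion (so add zero entropy to your hypothesis check, since the paper lists it). Your extra remark is also correct: for the finite disjoint union of strictly ergodic blocks used later, any affine bijection between the finite-dimensional simplices is automatically a weak$^*$ homeomorphism, so the application does not need that part of the citation.
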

\begin{proof}
An irrational circle rotation is strictly ergodic, nonperiodic, and has
zero topological entropy.  The conclusion is therefore
\cite[Theorem~3]{DownarowiczLacroix1998}.  The assertions concerning
$\Phi_*$ and the corresponding measure-preserving systems are part of
the Borel$^*$-isomorphism conclusion; see
\cite[p.~2]{DownarowiczLacroix1998}.
\end{proof}

\subsection{Proof of Theorem~\ref{thm:finite-profile-realization}}
\label{subsec:profile-amalgamation}

\begin{proof}[Proof of Theorem~\ref{thm:finite-profile-realization}]
For each $i\in[\ell]$, apply
Lemma~\ref{lem:single-pair-realization} with
$(\iota,b)=(\iota_i,b_i)$, and write
\[
(\widehat X_i,\widehat T_i,\widehat\mu_i)
:=
(\widehat X_{\iota_i,b_i},
 \widehat T_{\iota_i,b_i},
 \widehat\mu_{\iota_i,b_i}),
\qquad
Z_i:=Z_{\iota_i},
\qquad
m_i:=m_{\iota_i}.
\]
Let
$
\widehat\kappa_i:\widehat X_i\to Z_i,
$ and $
p_i:Z_i\to Y
$
be the corresponding factor maps, and put
$\widehat\pi_i:=p_i\circ\widehat\kappa_i$.
By Lemma~\ref{lem:single-pair-realization},
$(\widehat X_i,\widehat T_i)$ is strictly ergodic,
$\widehat\kappa_i$ is its  maximal compact
factor map, every fiber of $p_i$ has cardinality $\iota_i$, the
conditional measures of $\widehat\mu_i$ over
$\widehat\kappa_i$ have exactly $b_i$ support points almost
everywhere, and
$
h_{\mathrm{top}}(\widehat T_i)=0.
$

Form the finite disjoint union
\[
\widehat X:=\bigsqcup_{i=1}^{\ell}\widehat X_i,
\qquad
\widehat T|_{\widehat X_i}:=\widehat T_i,
\qquad
\widehat\pi|_{\widehat X_i}:=\widehat\pi_i.
\]
Then
$
\widehat\pi:
(\widehat X,\widehat T)\to(Y,R_\alpha)
$
is a topological factor map.  Since the sets $\widehat X_i$ are
clopen and invariant and each block is uniquely ergodic,
\[
\M_{\widehat T}^e(\widehat X)
=
\{\widehat\mu_1,\ldots,\widehat\mu_\ell\}.
\]

Applying Theorem~\ref{thm:DL} to $\widehat\pi$, we obtain a minimal
$\mb Z$-system $(X,T)$, an almost one-to-one factor map
$
\pi:(X,T)\to(Y,R_\alpha),
$
full Borel sets
$\widehat X_0\subseteq\widehat X$ and $X_0\subseteq X$, and a
Borel bijection
$
\Phi:\widehat X_0\to X_0
$
such that
\[
\Phi\circ\widehat T=T\circ\Phi,
\qquad
\pi\circ\Phi=\widehat\pi.
\]
Moreover, $\Phi_*$ is an affine bijection between the invariant
measure spaces.  It therefore maps extreme points bijectively onto
extreme points, and hence
\[
\M_T^e(X)=\{\mu_1,\ldots,\mu_\ell\},
\qquad
\mu_i:=\Phi_*\widehat\mu_i.
\]

Since $\pi:X\to Y$ is almost one-to-one and $Y$ is equicontinuous, one has
$\pi$ is the maximal equicontinuous factor map of $(X,T)$;
see, for example,
\cite[Proposition~1.1]{GabrielDominik2020}.

It remains to identify the compact and residual multiplicities.  Fix
$i\in[\ell]$, and put
$
X_{0,i}
:=
\Phi(\widehat X_0\cap\widehat X_i).
$
By the Lusin--Souslin theorem
\cite[Theorem~2.8\textup{(2)}]{Glasner2003}, the set $X_{0,i}$ is
Borel, and the restriction
$
\Phi_i:
\widehat X_0\cap\widehat X_i\to X_{0,i}
$
is a measure-theoretic isomorphism from
$(\widehat X_i,\widehat\mu_i,\widehat T_i)$ to
$(X,\mu_i,T)$.  Moreover, it is an isomorphism over $Y$, since
\[
\pi\circ\Phi_i
=
\widehat\pi_i
=
p_i\circ\widehat\kappa_i
\qquad
\widehat\mu_i\text{-a.e.}
\]

Define, modulo a $\mu_i$-null set,
$
\kappa_i
:=
\widehat\kappa_i\circ\Phi_i^{-1}.
$
Then
$\kappa_i$ is the measure-theoretic maximal compact factor of
$(X,\mu_i,T)$.  Furthermore,
\[
\pi=p_i\circ\kappa_i
\qquad
\mu_i\text{-a.e.}
\]
Hence
\[
\iota_{\mu_i}
=
|p_i^{-1}(y)|
=
\iota_i.
\]

Maximal measure sequence entropy is invariant under
measure-theoretic isomorphism.  Therefore, by
Proposition~\ref{prop:residual-sequence-entropy},
\[
b_{\mu_i}
=
\exp\bigl(h_{\mu_i}^*(T)\bigr)
=
\exp\bigl(h_{\widehat\mu_i}^*(\widehat T_i)\bigr)
=
b_i.
\]
Thus
\[
\M_T^e(X)=\{\mu_1,\ldots,\mu_\ell\},
\qquad
\Prof(X,T)=\mathcal P.
\]

Finally,
\[
h_{\mu_i}(T)
=
h_{\widehat\mu_i}(\widehat T_i)
=
0
\qquad
(i\in[\ell]).
\]
Since these are all the ergodic invariant measures of $X$, the
variational principle gives
\[
h_{\mathrm{top}}(T)=0.\qedhere
\]
\end{proof}

The realization theorem yields the following sharp characterization.
\begin{corollary}
\label{cor:divisibility}
Let $d,b\in\N$.  The following are equivalent.
\begin{enumerate}[label=\textup{(\roman*)},leftmargin=2.4em]
\item
There exists a uniquely ergodic zero-entropy minimal almost one-to-one
extension $(X,T)$ of an irrational circle rotation, with unique
invariant measure $\mu$, such that
\[
\tdeg_{\mathbb Z}(X)=d
\qquad\text{and}\qquad
h_\mu^*(T)=\log b.
\]

\item
$b\mid d$; that is, $d=kb$ for some $k\in\N$
\end{enumerate}
\end{corollary}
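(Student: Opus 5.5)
The plan is to prove the two implications separately, using Theorem~\ref{thm:intro-degree} for the necessity of divisibility and Theorem~\ref{thm:finite-profile-realization} for sufficiency.

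For (i)$\Rightarrow$(ii), let $(X,T)$ be as in (i) with unique invariant measure $\mu$. Then $\M_T^e(X)=\{\mu\}$, and Theorem~\ref{thm:intro-degree} collapses to the single-term identity
\[
d=\tdeg_{\mathbb Z}(X)=\iota_\mu b_\mu=\iota_\mu\exp\bigl(h_\mu^*(T)\bigr).
\]
Since $d<\infty$, both $\iota_\mu$ and $b_\mu$ are finite positive integers. By Proposition~\ref{prop:residual-sequence-entropy}, $h_\mu^*(T)=\log b_\mu$. The hypothesis $h_\mu^*(T)=\log b$ and injectivity of $\log$ on $\N$ then give $b_\mu=b$. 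Hence $d=\iota_\mu b$, so $b\mid d$ with $k=\iota_\mu$. This is exactly Remark~\ref{rem:unique-arithmetic-rigidity}. Note that zero entropy and the almost one-to-one hypothesis are not needed in this direction.

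For (ii)$\Rightarrow$(i), write $d=kb$ and apply Theorem~\ref{thm:finite-profile-realization} to the one-element multiset $\mathcal P=\multiset{(k,b)}$. This produces a zero-entropy minimal $\mathbb Z$-system $(X,T)$ which is an almost one-to-one extension of an irrational circle rotation, has $\M_T^e(X)=\{\mu\}$, and satisfies $\Prof(X,T)=\mathcal P$. Since there is exactly one ergodic measure, and every invariant measure is a barycenter of ergodic ones, $(X,T)$ is uniquely ergodic with unique invariant measure $\mu$. From the profile we read off $\iota_\mu=k$ and $b_\mu=b$. Proposition~\ref{prop:residual-sequence-entropy} then gives $h_\mu^*(T)=\log b$. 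Theorem~\ref{thm:intro-degree} (equivalently Theorem~\ref{thm:general-degree-decomposition}) gives $\tdeg_{\mathbb Z}(X)=\iota_\mu b_\mu=kb=d$.

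No step is a genuine obstacle, since the heavy lifting sits in the cited theorems. The one point to check is that the almost one-to-one factor map supplied by the realization theorem is the maximal equicontinuous factor map. Only then are $\iota_\mu$ and $\tdeg_{\mathbb Z}(X)$ computed relative to the correct base. This identification was already made inside the proof of Theorem~\ref{thm:finite-profile-realization}, and I will simply invoke it there.
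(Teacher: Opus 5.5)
Your proposal is correct and follows the paper's proof essentially verbatim. The forward direction reads $d=\iota_\mu b$ off the single-term case of Theorem~\ref{thm:intro-degree}, and the converse applies Theorem~\ref{thm:finite-profile-realization} to the singleton profile $\multiset{(d/b,b)}$. Your additional checks are exactly the points the paper's short argument uses implicitly: finiteness of $\iota_\mu$ and $b_\mu$, unique ergodicity from having a single ergodic measure, and the identification of the almost one-to-one factor as the maximal equicontinuous factor inside the realization proof.
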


\begin{proof}
Suppose that \textup{(i)} holds.  By
Theorem~\ref{thm:intro-degree},
\[
d
=
\tdeg_{\mathbb Z}(X)
=
\iota_\mu
\exp\bigl(h_\mu^*(T)\bigr)
=
\iota_\mu b.
\]
Since $\iota_\mu\in\N$, it follows that $b\mid d$.

Conversely, suppose that $b\mid d$.  Applying
Theorem~\ref{thm:finite-profile-realization} to the singleton profile
$
\multiset{(d/b,b)}
$
gives a uniquely ergodic zero-entropy minimal almost one-to-one
extension of an irrational circle rotation satisfying
$
\tdeg_{\mathbb Z}(X)
=
(d/b)b
=
d
$
and
$
h_\mu^*(T)=\log b.
$
\end{proof}
\appendix

\section{Conditional degree and finite topomorphic extensions}
\label{app:finite-topomorphic-characterization}
Throughout this appendix, let $G$ be a countable discrete group and
let $(X,G)$ be a minimal $G$-system with
$\M_G(X)\ne\emptyset$.  

\begin{definition}
\label{def:finite-topomorphic-extension}
Let $m\in\N$.  We say that
$
\pi:(X,G)\to(\Xeq,G)
$
is an \emph{at most $m$-to-one topomorphic extension} if there exist
Borel maps
$
\gamma_1,\ldots,\gamma_m:\Xeq\to X
$
such that
$
\pi\circ\gamma_j=\id_{\Xeq},
$ $
j=1,\ldots,m,
$
and
$
\mu(\Gamma_\gamma)=1$ for every
$\mu\in\M_G(X),
$
where
$
\Gamma_\gamma
:=
\bigcup_{j=1}^m
\{x\in X:x=\gamma_j(\pi(x))\}.
$
Repetitions among the maps $\gamma_j$ are allowed.
\end{definition}

\begin{proposition}
\label{prop:topomorphic-characterization}
Let $m\in\N$.  Then
$
\pi:(X,G)\to(\Xeq,G)
$
is an at most $m$-to-one topomorphic extension if and only if
$
\tdeg_G(X)\le m.
$
\end{proposition}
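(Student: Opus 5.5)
We prove the two implications separately. The setting is a countable discrete group $G$, possibly non-amenable, and we use only that $\M_G(X)\ne\emptyset$.

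\emph{At most $m$-to-one $\Rightarrow$ $\tdeg_G(X)\le m$.} Let $\gamma_1,\ldots,\gamma_m$ be as in Definition~\ref{def:finite-topomorphic-extension}, and fix $\mu\in\M_G(X)$ with disintegration $\mu=\int\mu_y\,d\nu(y)$ over $\pi$. The set $\Gamma_\gamma$ is Borel, being a finite union of preimages of the diagonal under the Borel maps $x\mapsto(x,\gamma_j(\pi(x)))$. Since $\mu(\Gamma_\gamma)=1$, we get $\mu_y(\Gamma_\gamma)=1$ for $\nu$-a.e.\ $y$. Moreover $\mu_y$ is concentrated on $\pi^{-1}(y)$, and $\Gamma_\gamma\cap\pi^{-1}(y)\subseteq\{\gamma_1(y),\ldots,\gamma_m(y)\}$. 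Hence $\mu_y$ is an atomic measure on at most $m$ points, so its closed support has at most $m$ points. Taking the essential supremum and then the supremum over $\mu$ gives $\tdeg_G(X)\le m$.

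\emph{$\tdeg_G(X)\le m$ $\Rightarrow$ at most $m$-to-one.} First we redo Proposition~\ref{prop:finite-conditional-degree} without amenability. That proof used amenability only to guarantee that $\M_G^e(X)\ne\emptyset$. Here $\M_G(X)\ne\emptyset$ is assumed, and ergodic decomposition for countable groups gives ergodic measures. The disjointness argument then bounds the number of ergodic measures by $d:=\tdeg_G(X)\le m$. The same argument gives the identity $d=\sum_i k_i$, and the Lusin--Novikov and Arsenin--Kunugui steps are purely descriptive-set-theoretic. This produces Borel sections $\gamma_{i,j}$, with $\sum_i k_i=d$, whose values describe $\supp((\mu_i)_y)$ on a conull set $Y_0$. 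Relabel them as $\gamma_1,\ldots,\gamma_d$, and pad with repetitions, for instance $\gamma_{d+1}=\cdots=\gamma_m:=\gamma_1$, to get exactly $m$ maps.

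Now let $\mu\in\M_G(X)$ be arbitrary. By finiteness of the ergodic set, $\mu=\sum_i c_i\mu_i$. On a common conull set we have $\mu_y=\sum_i c_i(\mu_i)_y$, and each $(\mu_i)_y$ is a finitely supported probability measure. Its support $\{\gamma_{i,1}(y),\ldots,\gamma_{i,k_i}(y)\}$ therefore carries full mass. So $\mu_y(\{\gamma_1(y),\ldots,\gamma_d(y)\})=1$ for $\nu$-a.e.\ $y$. Finally, for $y$ in the conull set, $\pi^{-1}(y)\cap\{\gamma_j(y)\}_j=\pi^{-1}(y)\cap\Gamma_\gamma$. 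Integrating gives $\mu(\Gamma_\gamma)=\int\mu_y(\Gamma_\gamma)\,d\nu=1$.

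\emph{Main obstacle.} The delicate point is the non-amenable setting. We must check that uniqueness of the invariant measure $\nu$ on $\Xeq$ holds without amenability. It does: $\Xeq$ is minimal equicontinuous, so Lemma~\ref{lem:homogeneous-factor-fibers}(a) applies, and no amenability was used there. We must also check that every $\mu\in\M_G(X)$ projects to $\nu$, which follows from that uniqueness. With these two facts, every step above goes through verbatim.
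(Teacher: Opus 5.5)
Your proof is correct and follows the paper's argument. The forward direction reads off $\supp(\mu_y)\subseteq\{\gamma_1(y),\ldots,\gamma_m(y)\}$ from $\mu(\Gamma_\gamma)=1$. The converse invokes Proposition~\ref{prop:finite-conditional-degree}, pads the sections with repetitions, and uses that every invariant measure is a convex combination of the finitely many ergodic ones. Your extra check that Proposition~\ref{prop:finite-conditional-degree} and the unique ergodicity of $\Xeq$ need no amenability, only $\M_G(X)\ne\emptyset$, fills in a point the paper leaves implicit.
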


\begin{proof}
Suppose first that $\pi$ is an at most $m$-to-one topomorphic
extension.  For $\mu\in\M_G(X)$, write
$
\mu=\int_{\Xeq}\mu_y\,d\nu(y).
$
Since $\mu(\Gamma_\gamma)=1$, one has
\[
\mu_y
\bigl(
\{\gamma_1(y),\ldots,\gamma_m(y)\} 
\bigr)
=1\qquad\text{for $\nu$-a.e. $y\in\Xeq$.}
\]
  Hence
\[
\supp(\mu_y)
\subseteq
\{\gamma_1(y),\ldots,\gamma_m(y)\}\qquad\text{for $\nu$-a.e. $y\in\Xeq$}
\]
 and therefore
\[
\tdeg_G(X)\le m.
\]

Conversely, suppose that
$
d:=\tdeg_G(X)\le m.
$
By Proposition~\ref{prop:finite-conditional-degree}, there are finitely
many ergodic measures
$
\M_G^e(X)=\{\mu_1,\ldots,\mu_\ell\}
$
and Borel maps
\[
\gamma_{i,j}:\Xeq\to X,
\qquad
1\le i\le\ell,\quad 1\le j\le k_i,
\]
such that
\[
\sum_{i=1}^{\ell}k_i=d,
\qquad
\pi\circ\gamma_{i,j}=\id_{\Xeq},
\quad\text{and}\quad
\supp((\mu_i)_y)
=
\{\gamma_{i,1}(y),\ldots,\gamma_{i,k_i}(y)\}\quad\text{for $\nu$-a.e. $y\in\Xeq$.}
\]

Relabel these maps as
$
\gamma_1,\ldots,\gamma_d,
$
and, if $d<m$, repeat some of them to obtain
$\gamma_1,\ldots,\gamma_m$.  Every invariant measure is a convex
combination of $\mu_1,\ldots,\mu_\ell$, so its conditional measures
are supported on
$
\{\gamma_1(y),\ldots,\gamma_m(y)\}
$
for $\nu$-a.e. $y$.  Thus
\[
\mu(\Gamma_\gamma)=1
\qquad
\text{for every }\mu\in\M_G(X),
\]
and $\pi$ is an at most $m$-to-one topomorphic extension.
\end{proof}

\end{document}